\documentclass[smallextended]{svjour3}

\smartqed

\usepackage{aicescover}

\aicescovertitle{Legendre-Gauss-Lobatto grids\\ and associated\\ nested dyadic grids}
\aicescoverack{Financial support by the DFG project ``Optimal preconditioners of spectral Discontinuous Galerkin methods for elliptic boundary value problems'' (DA 117/23-1), the Excellence Initiative of the German federal and state governments (RWTH Aachen Seed Funds, Distinguished Professorship projects, Graduate School AICES), and NSF grant DMS 1222390 is gratefully acknowledged.}


\usepackage[utf8]{inputenc}
\usepackage{amsmath}
\usepackage{amssymb}
\usepackage{graphicx,subfig}
\usepackage{enumitem}
\usepackage{algorithm,algpseudocode}


\usepackage{hyperref}
\usepackage{bookmark}
\hypersetup{
  pdftitle    = {Legendre-Gauss-Lobatto grids and associated nested dyadic grids},
  pdfauthor   = {Kolja Brix, Claudio Canuto, and Wolfgang Dahmen},
  pdfkeywords = {Legendre-Gauss-Lobatto grid, dyadic grid, graded grid, nested grids},
  pdfborder={0 0 0.5}
}



\author{\mbox{Kolja Brix $\cdot$ Claudio Canuto $\cdot$ Wolfgang Dahmen}}
\authorrunning{K. Brix, C. Canuto, and W. Dahmen}

\title{Legendre-Gauss-Lobatto grids and\\ associated nested dyadic grids}

\institute{
K. Brix \and W. Dahmen \at
Institut f\"ur Geometrie und Praktische Mathematik, RWTH Aachen, Templergraben 55, 52056 Aachen, Germany\\
\email{\{brix,dahmen\}@igpm.rwth-aachen.de}
\and
C. Canuto \at
Dipartimento di Scienze Matematiche, Politecnico di Torino, Corso Duca degli Abruzzi 24, 10129 Torino, Italy\\
\email{claudio.canuto@polito.it}
}

\date{}


\newcommand{\fsabs}[1]{\ensuremath{|#1|}} 
\newcommand{\abs}[1]{\ensuremath{\left|#1\right|}} 
\newcommand{\fsfloor}[1]{\ensuremath{\lfloor #1 \rfloor}}
\newcommand{\floor}[1]{\ensuremath{\left\lfloor #1 \right\rfloor}}
\newcommand{\ceil}[1]{\ensuremath{\left\lceil #1 \right\rceil}}

\newcommand{\R}{\ensuremath{\mathbb{R}}} 
\newcommand{\N}{\ensuremath{\mathbb{N}}} 

\newcommand{\cG}{\ensuremath{{\cal G}}}
\newcommand{\cD}{\ensuremath{{\cal D}}}
\newcommand{\cP}{\ensuremath{{\cal P}}}

\newcommand{\oDelta}{\overline{\Delta}}
\newcommand{\uDelta}{\underline{\Delta}}
\newcommand{\oeta}{\overline{\eta}}
\newcommand{\ueta}{\underline{\eta}}

\newcommand{\cGLGL}{\ensuremath{\cG^\text{LGL}}}
\newcommand{\cGCGL}{\ensuremath{\cG^\text{CGL}}}

\newcommand{\SN}{${\bf Str}_N\, $}
\newcommand{\SbarN}{${\bf Str}_{\bar N}\, $}
\newcommand{\MQ}{${\bf MQ}_{\bar N}\, $}

\newcommand{\algo}[1]{\textsf{#1}}

\DeclareMathOperator{\argmax}{argmax}
\DeclareMathOperator{\argmin}{argmin}


\begin{document}

\aicescoverpage

\setcounter{page}{1}

\maketitle

\begin{abstract}
Legendre-Gauss-Lobatto (LGL) grids play a pivotal role in nodal spectral methods for the numerical solution of partial differential equations. They not only provide efficient high-order quadrature rules, but give also rise to norm equivalences that could eventually lead to efficient preconditioning techniques in high-order methods. Unfortunately, a serious obstruction to fully exploiting the potential of such concepts is the fact that LGL grids of different degree are not nested. This affects, on the one hand, the choice and analysis of suitable \emph{auxiliary spaces}, when applying the \emph{auxiliary space method} as a principal preconditioning paradigm, and, on the other hand, the efficient solution of the auxiliary problems. As a central remedy, we consider certain nested hierarchies of dyadic grids of locally comparable mesh size, that are in a certain sense properly associated with the LGL grids. Their actual suitability requires a subtle analysis of such grids which, in turn, relies on a number of refined properties of LGL grids. The central objective of this paper is to derive just these properties. This requires first revisiting properties of close relatives to LGL grids which are subsequently used to develop a refined analysis of LGL grids. These results allow us then to derive the relevant properties of the associated dyadic grids.

\keywords{Legendre-Gauss-Lobatto grid \and dyadic grid \and graded grid \and nested grids}

\subclass{
33C45 \and 
%
34C10 \and 
65N35 
}

\end{abstract}


\section{Introduction}\label{sect:intro}

Studying the distribution of zeros of orthogonal polynomials is a classical theme that has been addressed in an enormous number of research papers. The sole reason for revisiting this topic here is the crucial role of \emph{Legendre-Gauss-Lobatto (LGL) grids}, formed by the zeros of corresponding orthogonal polynomials, for the development of efficient preconditioners for high order finite element and even spectral discretizations of PDEs, see e.g. \cite{Ca94,CHQZ2006,BCCD1,BCCD2}. In connection with elliptic PDEs, which often possess very smooth solutions and thus render high order methods at least potentially extremely efficient, a key constituent is the fact that interpolation at LGL grids give rise to fully robust (with respect to the polynomial degrees) isomorphisms between high order polynomial spaces and low order finite element spaces on LGL grids. However, unfortunately, one quickly faces some serious obstructions to fully exploiting this remarkable potential of LGL grids when simultaneously trying to exploit the flexibility of \emph{Discontinuous Galerkin} (DG) schemes, namely locally refined grids and locally varying polynomial degrees. Indeed, as explained in \cite{BCCD1}, the essential source of the problems encountered then is the fact that LGL grids are \emph{not nested}. This affects the choice and analysis of suitable \emph{auxiliary spaces}, when using the auxiliary space method, see e.g. \cite{Oswald,Xu} as preconditioning strategy, as well as the efficient solution of the corresponding auxiliary problems. As a crucial remedy, certain hierarchies of nested \emph{dyadic} grids have been introduced in \cite{BCCD1} which are \emph{associated} in a certain sense with LGL grids. The term ``associated'' encapsulates a number of properties of such dyadic grids, some of which have been used and claimed in \cite{BCCD1} but will be proved here which is the central objective of this paper.

The layout of the paper is as follows. After collecting, for the convenience of the reader, in Section~\ref{sect:ultraspherical} some classical facts and tools used in the sequel, we formulate in Section~\ref{sect:main} the main results of the paper. The first one, Theorem~\ref{thm:A}, is concerned with the quasi-uniformity of LGL grids as well as with a certain notion of equivalence between LGL grids of \emph{different} order. This is important for dealing with DG-discretizations involving locally varying polynomial degrees, see \cite{BCCD1,BCCD2}. The second one, Theorem~\ref{thm:B}, concerns certain hierarchies of nested dyadic grids that are associated in a very strong sense with LGL grids. Both theorems play a crucial role for the design and analysis of preconditioners for DG systems. Section~\ref{sect:ProofA} is devoted to the proof of Theorem~\ref{thm:A}. This requires deriving a number of \emph{refined properties} of LGL grids which to our knowledge cannot be found in the literature. In particular, we need to revisit in Section~\ref{sect:CGL} some close relatives namely \emph{Chebyshev-Gauss-Lobatto (CGL) nodes} since they have explicit formulae that help deriving sharp estimates. The central subject of Section~\ref{sect:dyadic} is the generation of dyadic grids associated in a certain way with a given other grid as well as the analysis of the properties of these dyadic grids. In particular, the results obtained in this section lead to a specific hierarchy for which the properties claimed in Theorem~\ref{thm:B} are verified.

Throughout the paper we shall employ the following notational convention. By $a \lesssim b$ we mean that the quantity $a$ can be bounded by a constant multiple of $b$ uniformly in the parameters $a$ and $b$ may depend on. Likewise $a \gtrsim b$ is equivalent to $b \lesssim a$ and $a\simeq b$ means $a\lesssim b$ and $b\lesssim a$.


\section{Preliminaries and Classical Tools}\label{sect:ultraspherical}

A central notion in this work concerns \emph{grids} $\cG$ induced by zeros of certain orthogonal polynomials, especially, the first derivatives of Legendre polynomials. It will be important though that those are special cases of \emph{Ultraspherical or Gegenbauer polynomials} whose definition is recalled for the convenience of the reader.

\begin{definition}[Ultraspherical or Gegenbauer polynomials, {\cite[Section~4.7]{Szegoe1978}}]
Let the parameters $\lambda > -\frac{1}{2}$ and $N\in\N$ be fixed. The ultraspherical or Gegenbauer polynomial $P^{(\lambda)}_N$ of degree $N$ is defined as orthogonal polynomial on the interval $[-1,1]$ with respect to the weighting function $w^{(\lambda)}(x):=(1-x^ 2)^{\lambda-\frac{1}{2}}$, i.e.
\begin{align*}
\int_{-1}^{1} P^{(\lambda)}_{N}(x) P^{(\lambda)}_{N'}(x) \, w^{(\lambda)}(x) \, \mathrm{d}x = c^{(\lambda)}_{N} \delta_{N,N'} \qquad \text{for all} \quad N,N' \in \N,
\end{align*}
with
\begin{align*}
c^{(\lambda)}_{N}=\frac{2^{1-2\lambda} \pi}{\Gamma(\lambda)^2} \frac{\Gamma(N+2\lambda)}{(N+\lambda)\Gamma(N+1)}
\end{align*}
and normalization
\begin{align*}
P^{(\lambda)}_N(1) = \binom{N+2\lambda-1}{N}.
\end{align*}
\end{definition}

The following useful properties of ultraspherical polynomials can be found in \cite[Chapter 4.7]{Szegoe1978}.
For any $N\in\N$ the ultraspherical polynomials fulfill the symmetry property
\begin{equation}
P^{(\lambda)}_N(-x) = (-1)^N \, P^{(\lambda)}_N(x) \qquad \text{for all} \quad x \in \R,
\label{eq:symmetry}
\end{equation}
and the differentiation rule
\begin{equation}
\frac{d}{dx} P^{(\lambda)}_N(x) = 2 \lambda \, P^{(\lambda+1)}_{N-1}(x) \qquad \text{for all} \quad x \in \R \label{eq:diffrule}
\end{equation}
holds.

The understanding of these polynomials hinges to a great extent on a fact that will be used several times, namely that the ultraspherical polynomial $P^{(\lambda)}_N$ is a solution of the linear homogeneous ordinary differential equation (ODE) of second order
\begin{equation}
(1-x^2) \, y''(x) - (2\lambda+1) x \, y'(x) + N(N+2\lambda) \, y(x) =0,
\label{eq:JacobiODE}
\end{equation}
see, e.g., \cite[(4.2.1)]{Szegoe1978}.
By an elementary transformation, based on the product ansatz for the solution $y(x):=s(x)u(x)$, the first order term can be eliminated, see \cite[Section~1.8]{Szegoe1978}, so that the transformed ODE
\begin{equation}
\frac{d^2u}{dx^2} + \phi^{(\lambda)}_{N}(x) u=0\;, \quad \text{where \ }
\phi^{(\lambda)}_{N}(x) = \frac{1-(\lambda-\frac{1}{2})^2}{(1-x^{2})^2} + \frac{(N+\lambda)^2 - \frac{1}{4}}{1-x^{2}} \;,
\label{eq:JacobiODEtrafo}
\end{equation}
has the solutions $u(x)=(1-x^2)^{(2\lambda+1)/4} P^{(\lambda)}_{N}(x)$, see \cite[(4.7.10)]{Szegoe1978}.

For the convenience of the reader we recall next some standard tools that are used to estimate the zeros of classical orthogonal polynomials and their spacings.

\begin{theorem}[{Sturm Comparison Theorem, cf. \cite[Section 2]{LM1986}, see also \cite[Section 1.82]{Szegoe1978}}]\label{theo:SturmComparison}
Let $[a,b]\subset \R$ an interval and $f:[a,b]\rightarrow \R$ and $F:[a,b]\rightarrow \R$ two continuous functions. Let $y$ and $Y$, respectively, be nontrivial solutions of the homogeneous linear ordinary differential equations
\begin{align}
y''+f(x)y=0
\qquad \text{and} \qquad
Y''+F(x)Y=0,\label{eq:SturmODEs}
\end{align}
respectively.
\begin{enumerate}[label=(\roman{*}), ref=(\roman{*})]
  \item\label{it:SCT1} Let $y(a)=Y(a)=0$ and $\lim_{x\rightarrow a+} y'(x)=\lim_{x\rightarrow a+} Y'(x)>0$.
  If $F(x)>f(x)$ for $a<x<b$, then $y(x)>Y(x)$ for $a<x \le c$,
  where $c$ is the smallest zero of $Y$ in the interval $(a,b]$.
  \item\label{it:SCT2} Let $y(a)=Y(a)=0$ and $F(x)>f(x)$ for $a<x<b$.
  Then the smallest zero of $Y$ in $(a,b]$ occurs left of the smallest zero of $y$ in $(a,b]$.
  \item\label{it:SCT3} Under the hypothesis of (ii), the $k$-th zero of $Y$ in $(a,b]$ occurs
  before the $k$-th zero of $y$ to the right of $a$. \endproof
\end{enumerate}
\end{theorem}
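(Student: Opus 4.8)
The plan is to hang everything on the Wronskian-type function $W(x):=y'(x)\,Y(x)-y(x)\,Y'(x)$, which is $C^{1}$ on $[a,b]$ because $f$ and $F$ are continuous (so $y,Y\in C^{2}$), and which satisfies the identity $W'(x)=y''(x)Y(x)-y(x)Y''(x)=\bigl(F(x)-f(x)\bigr)y(x)Y(x)$, obtained by substituting the two equations in \eqref{eq:SturmODEs}. Consequently $W$ is strictly increasing on every subinterval of $(a,b)$ on which $yY>0$. I would first establish \ref{it:SCT2}, then deduce \ref{it:SCT1} from it, and finally obtain \ref{it:SCT3} by iterating the argument of \ref{it:SCT2} over consecutive zeros.

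For \ref{it:SCT2}, let $x_{1}$ be the smallest zero of $y$ in $(a,b]$ and suppose, towards a contradiction, that $Y$ has no zero in $(a,x_{1})$. Replacing $Y$ by $-Y$ if necessary we may assume $Y>0$ on $(a,x_{1})$, and likewise $y>0$ there; moreover $y'(x_{1})<0$, since $y(x_{1})=0$ and, by uniqueness for \eqref{eq:SturmODEs}, $y'(x_{1})\neq0$. From $y(a)=Y(a)=0$ we get $W(a)=0$, whereas $W'=(F-f)yY>0$ on $(a,x_{1})$ forces $W(x_{1})>0$; on the other hand $W(x_{1})=y'(x_{1})Y(x_{1})\leq0$ because $Y(x_{1})\geq0$ by continuity --- a contradiction. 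Hence $Y$ vanishes somewhere in $(a,x_{1})$, i.e. its smallest zero in $(a,b]$ lies strictly left of $x_{1}$.

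Part \ref{it:SCT1} then follows quickly. Let $c$ be the smallest zero of $Y$ in $(a,b]$. By \ref{it:SCT2}, $y$ has no zero in $(a,c]$, so --- normalising the sign of $Y$ so that $Y'(a^{+})=y'(a^{+})>0$ --- both $y$ and $Y$ are positive on $(a,c)$, whence $W>0$ on $(a,c]$ exactly as above. Since $\frac{d}{dx}\!\left(\frac{y}{Y}\right)=\frac{W}{Y^{2}}$, the quotient $y/Y$ is strictly increasing on $(a,c)$, while by l'H\^opital's rule $\lim_{x\to a^{+}}\frac{y(x)}{Y(x)}=\frac{y'(a^{+})}{Y'(a^{+})}=1$; therefore $y/Y>1$, i.e. $y>Y$, throughout $(a,c)$, and trivially $y(c)>0=Y(c)$. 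This is the asserted inequality $y(x)>Y(x)$ for $a<x\leq c$.

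For \ref{it:SCT3} I would run the same Wronskian comparison on each interval between consecutive zeros of $y$: let $a=x_{0}<x_{1}<x_{2}<\dots$ be $a$ together with the successive zeros of $y$ to the right of $a$. On $(x_{k-1},x_{k})$ the function $y$ has a fixed sign, and $y'(x_{k-1})$ and $-y'(x_{k})$ share it; if $Y$ had no zero there, then after fixing the sign of $Y$ suitably we would get $W(x_{k-1})\geq0$ and $W(x_{k})\leq0$, contradicting the strict monotonicity of $W$ on $(x_{k-1},x_{k})$. Hence $Y$ vanishes in each $(x_{k-1},x_{k})$, $k\geq1$, and since these zeros lie in pairwise disjoint intervals, $Y$ has at least $k$ zeros in $(a,x_{k})$ --- precisely the statement that the $k$-th zero of $Y$ in $(a,b]$ precedes the $k$-th zero of $y$ to the right of $a$. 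The only point demanding genuine care, and hence the main technical (rather than conceptual) obstacle, is the behaviour at the left endpoint $a$, where $y$ and $Y$ both vanish: one must verify that $W$ extends continuously with $W(a)=0$, justify the l'H\^opital limit $y/Y\to1$, and handle the sign bookkeeping at the zeros (using simplicity of the zeros of $y$), all of which rest on the continuity of $f,F$ and on uniqueness for \eqref{eq:SturmODEs}.
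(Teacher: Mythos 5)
The paper does not prove this theorem; it states it as a classical fact with citations to Laforgia--Muldoon \cite{LM1986} and Szeg\H{o} \cite{Szegoe1978}, and the \verb|\endproof| following item~(iii) simply closes the statement without a proof. So there is no in-paper argument to compare against, and what you have produced is a self-contained proof.

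Your argument is correct and is the standard Wronskian proof: setting $W=y'Y-yY'$, using $W'=(F-f)\,yY$, and exploiting sign/monotonicity of $W$ between consecutive zeros. Part~\ref{it:SCT2} is clean: with $y,Y>0$ on $(a,x_1)$ one gets $W(a)=0$, $W$ strictly increasing, hence $W(x_1)>0$, against $W(x_1)=y'(x_1)Y(x_1)\le 0$; note this argument also correctly rules out the case $Y(x_1)=0$. Part~\ref{it:SCT1} via $(y/Y)'=W/Y^2>0$ and the Cauchy-MVT/l'H\^opital limit $y/Y\to y'(a^+)/Y'(a^+)=1$ is right; the parenthetical ``normalising the sign of $Y$'' is actually unnecessary since the hypothesis already fixes $Y'(a^+)=y'(a^+)>0$. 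Part~\ref{it:SCT3} by iterating the Wronskian comparison over each interval $(x_{k-1},x_k)$ between consecutive zeros of $y$ (using $W(x_{k-1})\ge 0$, $W(x_k)\le 0$, and strict monotonicity of $W$) is the usual ``zeros interlace'' argument and gives that $Y$ has at least one zero in each such interval, hence at least $k$ zeros left of $x_k$. The only points of genuine care --- simplicity of zeros from uniqueness for the linear ODE, continuity of $W$ at $a$, applicability of l'H\^opital --- are all flagged by you and handled adequately, so there is no gap.
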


One application of the Sturm comparison theorem is the Sturm convexity theorem, where two intervals between zeros of the same function are compared.

\begin{theorem}[{Sturm Convexity Theorem, cf. \cite[Section 2]{LM1986}}]\label{theo:SturmConvexity}
Let $y$ be a nontrivial solution of
\begin{align}
y''+f(x)y=0,
\label{eq:SturmODE}
\end{align}
where $f:[a,b] \rightarrow \R$ is continuous and non-increasing. Then the sequence of zeros of $y(x)$ is convex, i.e., for consecutive zeros $x_{1}$, $x_{2}$, $x_{3} \in [a,b]$ of $y(x)$ we get
\[
x_{2} - x_{1} < x_{3} - x_{2}.
\]
\end{theorem}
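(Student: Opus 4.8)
The plan is to compare $y$ with a shifted copy of itself and then invoke the Sturm comparison theorem. Let $x_1<x_2<x_3$ be consecutive zeros of $y$ in $[a,b]$, set $\delta:=x_2-x_1>0$, and define $Y(x):=y(x+\delta)$ on $[a,b-\delta]$. From \eqref{eq:SturmODE} the function $Y$ solves $Y''+F(x)\,Y=0$ with $F(x):=f(x+\delta)$, and its zeros are the left-translates by $\delta$ of the zeros of $y$; since $x_2$ and $x_3$ are the two zeros of $y$ immediately to the right of $x_1$, the two zeros of $Y$ immediately to the right of $x_1$ are $x_2-\delta=x_1$ itself and $x_3-\delta$. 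Moreover, because $f$ is non-increasing and $\delta>0$, we have the pointwise inequality $F(x)=f(x+\delta)\le f(x)$ on $[x_1,b-\delta]$, i.e.\ the coefficient in the equation for $y$ dominates that in the equation for $Y$. Thus proving the theorem amounts to showing that the first zero of $y$ to the right of $x_1$, namely $x_2$, lies strictly to the left of the first zero of $Y$ to the right of $x_1$, namely $x_3-\delta$.

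If the inequality $F(x)<f(x)$ were \emph{strict} on $(x_1,x_3-\delta)$ --- which happens, for instance, when $f$ is strictly decreasing --- this would be immediate from Theorem~\ref{theo:SturmComparison}\ref{it:SCT3}, applied with the roles of $y$ and $Y$ interchanged so that $y$, carrying the larger coefficient, plays the part of the function denoted $Y$ there: its first zero to the right of $x_1$ occurs before the first zero of $Y$, i.e.\ $x_2<x_3-\delta$, which is $x_2-x_1<x_3-x_2$. The step I expect to be the main obstacle is reconciling the \emph{non-strict} monotonicity hypothesis on $f$ with the strict pointwise ordering required by Theorem~\ref{theo:SturmComparison}. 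One way around this is a perturbation argument for the weak inequality: replace $f$ by the strictly decreasing $f_\varepsilon(x):=f(x)-\varepsilon x$, apply the case just treated to the solution $y_\varepsilon$ of $y_\varepsilon''+f_\varepsilon y_\varepsilon=0$ with $y_\varepsilon(x_1)=0$ and $y_\varepsilon'(x_1)=y'(x_1)$, and let $\varepsilon\to0^+$, using continuous dependence of $y_\varepsilon$ and of its (simple) zeros on the coefficient; this yields $x_2-x_1\le x_3-x_2$. The strict inequality must then be obtained by ruling out the equality case.

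A cleaner, self-contained alternative that handles strictness directly is to argue with the Wronskian-type quantity $W:=y'Y-yY'$ on $[x_1,x_2]$. A direct computation using the two ODEs gives $W'=(F-f)\,yY$. Suppose, for contradiction, that $x_3-\delta\le x_2$; then $x_3-\delta>x_1$, and the interval $(x_1,x_3-\delta)\subseteq(x_1,x_2)$ contains no zero of $y$ (since $x_2$ is the smallest zero of $y$ exceeding $x_1$) and no zero of $Y$ (since $x_3-\delta$ is the smallest zero of $Y$ exceeding $x_1$), so after normalizing the signs we may take $y>0$ and $Y>0$ on $(x_1,x_3-\delta)$. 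There $W'=(F-f)\,yY\le0$, so $W$ is non-increasing on $[x_1,x_3-\delta]$, while $W(x_1)=0$ (both $y$ and $Y$ vanish at $x_1$). Evaluating at $x_3-\delta$ gives $W(x_3-\delta)=-y(x_3-\delta)\,Y'(x_3-\delta)$ with $Y'(x_3-\delta)<0$; if $x_3-\delta<x_2$ then $y(x_3-\delta)>0$, so $W(x_3-\delta)>0>W(x_1)$, contradicting monotonicity. If instead $x_3-\delta=x_2$, then $W(x_2)=-y(x_2)Y'(x_2)=0$, so $W\equiv0$ on $[x_1,x_2]$, forcing $F\equiv f$ there; combined with monotonicity this makes $f$ constant on $[x_1,x_3]$, so that $y$ is a pure trigonometric function there with equally spaced zeros --- the degenerate situation which the strict conclusion of Theorem~\ref{theo:SturmConvexity} tacitly excludes. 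Hence $x_3-\delta>x_2$, i.e.\ $x_3-x_2>x_2-x_1$, as claimed.
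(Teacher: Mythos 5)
The paper states the Sturm Convexity Theorem with a citation to Laforgia--Muldoon and does not supply its own proof, so there is no in-paper argument to compare against. Your proof is correct and is in fact the standard one: translate $y$ by $\delta=x_2-x_1$, observe that the shifted function $Y(x)=y(x+\delta)$ satisfies the companion equation with coefficient $F(x)=f(x+\delta)\le f(x)$, and compare. The Wronskian computation $W'=(F-f)yY$ is verified correctly, the sign bookkeeping on $(x_1,x_3-\delta)$ is right, and the reduction to a contradiction for $x_3-\delta\le x_2$ is sound; this is a cleaner route than invoking Theorem~\ref{theo:SturmComparison} with a perturbation, because it handles the non-strict inequality $F\le f$ without any $\varepsilon$-limiting.

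You have also put your finger on a genuine blemish in the statement as quoted: if $f$ is constant on $[x_1,x_3]$ the zeros are equally spaced and the strict inequality $x_2-x_1<x_3-x_2$ fails, so the theorem as written tacitly excludes that case (or, equivalently, the conclusion should be read as $\le$ in general, with strictness whenever $f$ is not constant on $[x_1,x_3]$). In the paper's application the coefficient $\phi^{(3/2)}_N$ in \eqref{eq:LGLODEtrafo} is strictly decreasing on $(-1,0)$, so the degenerate case cannot occur there and the strict conclusion is legitimate. Your identification of this edge case, and the observation that $W\equiv 0$ on $[x_1,x_2]$ forces $f$ constant on the whole $[x_1,x_3]$ (using the monotonicity of $f$ to propagate $f(x)=f(x+\delta)$ into constancy on each $[x,x+\delta]$), is exactly the right way to close the equality branch.
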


We also record the following consequence of a theorem by Markoff, see \cite[Theorem 6.21.1, pp. 121 ff]{Szegoe1978}, that
\begin{align}
\label{eq:movetocenter}
\frac{\partial x_\nu}{\partial \lambda} >0 \quad \text{for} \quad 1 \le \nu \le \floor{\frac{N}{2}}.
\end{align}
In other words, taking the symmetry \eqref{eq:symmetry} into account, we observe that the zeros of the ultraspherical polynomial $P^{(\lambda)}_N$ move towards the center of the interval $[-1,1]$ with increasing parameter $\lambda$.

We recall next for later purposes the main relations between ultraspherical, Legendre and Chebyshev polynomials from \cite[Chapter IV]{Szegoe1978}.

Here we are mainly interested in the \emph{Legendre polynomials} $L_N$ of degree $N \in \N$, which are the ultraspherical polynomials $L_N= {P^{(\frac{1}{2})}_N}$, and their first derivatives. Later we shall use the Chebyshev polynomials of first kind $T_N= c_T(N)P^{(0)}_N$ and of second kind $U_N= c_U(N)P^{(1)}_N$ as a tool.

\begin{figure}
\centering
\includegraphics[width=0.65\linewidth]{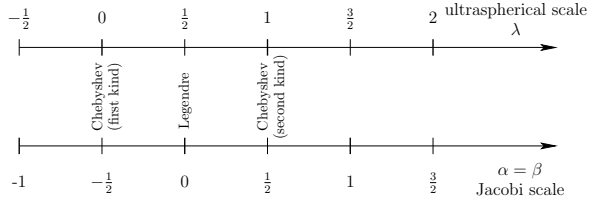}
\caption{Jacobi and ultraspherical scales.}
\label{fig:JacobiGegenbauerScales}
\end{figure}

Our primary interest concerns the special case of \emph{Legendre polynomials}.

\begin{definition}[{Legendre-Gauss-Lobatto nodes, grid and angles; see e.g. \cite[p. 71f, (2.2.18)]{CHQZ2006}}]
For $0 \le k \le N$ the LGL nodes $ \xi^{N}_k$ of \emph{order} $N$ are the $N+1$ zeros of the polynomial $(1-x^2) L'_N(x)$, where $L'_N$ is the first derivative of the Legendre polynomial of {degree} $N$. The LGL nodes are sorted in increasing order, i.e., we have $\xi^{N}_k < \xi^{N}_{k+1}$ for $0 \le k \le N-1$. Their collection $\cGLGL_{N}=\{ \xi_k^N \, : \, 0 \leq k \leq N\}$ forms the LGL grid of order $N$. Given the LGL grid $\cGLGL_N$, we define for $0\le k \le N-1$ the corresponding LGL intervals as $\Delta^{N}_k:=[\xi^{N}_{k},\xi^{N}_{k+1}] \subset [-1,1]$, which are of length $\abs{\Delta^{N}_{k}} = \xi^{N}_{k+1} - \xi^{N}_{k}$.

We also define the corresponding LGL angles $(\eta^{N}_k)_{k=0}^{N} \subset [0,\pi]$ as $\eta^{N}_k=\arccos(-\xi^{N}_k)$.
\end{definition}

\begin{figure}[b]
\begin{center}
\subfloat[LGL grid for odd N.]{\includegraphics[width=0.99\linewidth]{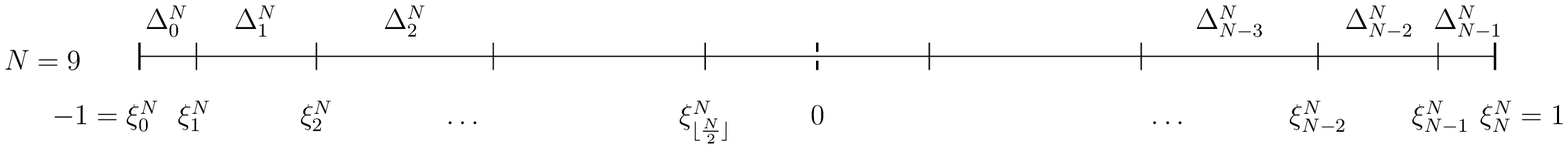}}\\
\subfloat[LGL grid for even N.]{\includegraphics[width=0.99\linewidth]{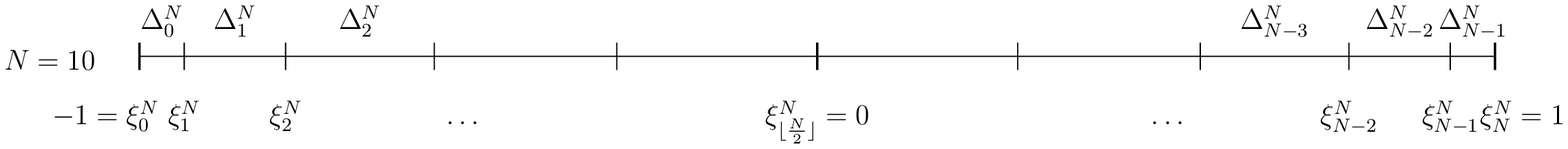}}
\end{center}
\caption{Numeration of LGL nodes and intervals.}
\label{fig:enumeration}
\end{figure}

Our notation deviates from the standard literature on orthogonal polynomials because the LGL {nodes} are usually enumerated in decreasing order, see for example~\cite[Chapter VI]{Szegoe1978}. Note that $\eta^{N}_0=0$ and $\eta^{N}_N=\pi$, i.e. $\xi^{N}_0=-1$ and $\xi^{N}_N=1$. Figure~\ref{fig:enumeration} exemplifies the enumeration scheme used for the LGL points and intervals for odd and even values of $N$.

By the differentiation rule \eqref{eq:diffrule} for ultraspherical polynomials, we have in the special case of Legendre polynomials
\begin{align*}
L_{N}'(x) = \frac{1}{2} P^{(1,1)}_{N-1}(x) = \frac{1}{2} P^{(\frac{3}{2})}_{N-1}(x).
\end{align*}

For the special case of $\lambda=\frac{3}{2}$, the transformed ODE \eqref{eq:JacobiODEtrafo}
\begin{align}
\frac{d^2u}{dx^2} + \phi^{(\frac{3}{2})}_{N}(x) u=0 \qquad \text{with} \quad
\phi^{(\frac{3}{2})}_{N}(x)=\frac{(N+\frac{3}{2})^2-\frac{1}{4}}{1-x^2}
\label{eq:LGLODEtrafo}
\end{align}
has the solution $u(x)=(1-x^2) P^{(\frac{3}{2})}_{N}(x)$, which has its zeros exactly at the LGL nodes of order $N+1$.

By the symmetry relation \eqref{eq:symmetry}, the LGL nodes are symmetric with respect to the origin, i.e., $\xi^{N}_k=-\xi^{N}_{N-k}$ for $0 \le k \le N$, in particular $\xi^{N}_{\frac{N}{2}}=0$ if $N$ is even. For the LGL angles, symmetry yields $\eta^{N}_{N-k}=\pi-\eta^{N}_k$ for $0 \le k \le N$, in particular $\eta^{N}_{\frac{N}{2}}=\frac{\pi}{2}$ if $N$ is even. Consequently, the LGL grids on $[-1,1]$ are \emph{symmetric} around zero and an analogous property holds, of course, for their affine images in general intervals $[a,b]$ with respect to their midpoint $(a+b)/2$. In what follows all grids under consideration are assumed without further mentioning to be symmetric in this sense.

We close this section with recalling a well-known fact about the \emph{monotonicity} of LGL grids since this will be used frequently.

\begin{theorem}[{Monotonicity of LGL interval lengths, cf. \cite[Theorem 5.1 for $\alpha=1$]{JT2009}}]
\label{theo:monotonicitylength}
The LGL interval lengths are strictly increasing from the boundary towards the center of $[-1,1]$, i.e., for $N \ge 3$ and $0\le k \le \floor{\frac{N-1}{2}-1}$ we have $\abs{\Delta^{N}_{k+1}} > \abs{\Delta^{N}_{k}}$.
\end{theorem}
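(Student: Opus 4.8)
The plan is to recast the statement as a Sturm comparison for the transformed ODE~\eqref{eq:LGLODEtrafo} and then to exploit the evenness and strict unimodality of its coefficient by reflecting the solution about an interior LGL node. First, taking $N-1$ in place of $N$ in~\eqref{eq:LGLODEtrafo}, the polynomial $u(x):=(1-x^2)P^{(\frac{3}{2})}_{N-1}(x)$ has as its simple zeros in $[-1,1]$ exactly the $N+1$ LGL nodes $\xi^{N}_0=-1<\xi^{N}_1<\dots<\xi^{N}_N=1$, and it solves $u''+\phi\,u=0$ on $(-1,1)$ with $\phi(x):=\phi^{(\frac{3}{2})}_{N-1}(x)=\frac{N(N+1)}{1-x^2}$. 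From $\phi'(x)=\frac{2N(N+1)\,x}{(1-x^2)^2}$ one reads off that $\phi$ is positive, even, and strictly increasing in $\abs{x}$, i.e., strictly decreasing on $(-1,0]$ and strictly increasing on $[0,1)$. Moreover, for every index $k$ with $0\le k\le\floor{\frac{N-1}{2}-1}$ we have $\xi^{N}_{k+1}<0$, since then $k+1\le\floor{\frac{N-1}{2}}$ and $\xi^{N}_{\floor{(N-1)/2}}$ is the LGL node immediately to the left of the center of $[-1,1]$, which is negative for all $N\ge3$.

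Fix such a $k$ and put $a:=\xi^{N}_{k}$, $p:=\xi^{N}_{k+1}$, $b:=\xi^{N}_{k+2}$, so that $-1\le a<p<0$, while $b\le0$ unless $N$ is odd and $k=\frac{N-1}{2}-1$, in which case $b=-p>0$ by symmetry. We must show $\abs{\Delta^{N}_{k+1}}=b-p>p-a=\abs{\Delta^{N}_{k}}$, and we argue by contradiction, \emph{assuming} $p-a\ge b-p$, i.e., $2p-a\ge b$. (When $b\le0$, all three nodes lie in $[-1,0]$, where $\phi$ is non-increasing, and the inequality is already immediate from the Sturm Convexity Theorem~\ref{theo:SturmConvexity}; the argument below, however, works uniformly --- in particular for the interval straddling the origin when $N$ is odd, which is precisely the case the convexity theorem cannot reach.) Set $v(x):=u(2p-x)$. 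The assumption yields $2p-b\ge a\ge-1$, so $2p-x\in(-1,1)$ for all $x\in(p,b)$, and hence $v$ is a nontrivial solution of $v''+\psi\,v=0$ on $(p,b)$ with $\psi(x):=\phi(2p-x)$.

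Two observations then produce the contradiction. First, the zeros of $v$ are the reflected nodes $2p-\xi^{N}_j$; since $a=\xi^{N}_{k}$ is the largest node below $p=\xi^{N}_{k+1}$, the smallest zero of $v$ exceeding $p$ is $2p-a\ge b$, so $v(p)=u(p)=0$ while $v$ has \emph{no} zero in $(p,b)$. Second, for $x\in(p,b)$ we have $2p-x<p<0$, and a short case distinction (according to whether $x\le0$, or $0<x<b=-p$ with $p<0$) gives $\abs{2p-x}>\abs{x}$; since $\phi$ is even and strictly increasing in $\abs{\cdot}$, this yields $\psi(x)=\phi(2p-x)>\phi(x)$ throughout $(p,b)$. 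Applying Theorem~\ref{theo:SturmComparison}\ref{it:SCT2} on $(p,b]$ with $Y:=v$, $y:=u$ (both vanishing at $p$) and $F:=\psi>f:=\phi$ on $(p,b)$, the smallest zero of $v$ in $(p,b]$ must lie strictly to the left of the smallest zero of $u$ in $(p,b]$, which is $b=\xi^{N}_{k+2}$; hence $v$ has a zero in $(p,b)$, contradicting the first observation. Thus $\abs{\Delta^{N}_{k+1}}>\abs{\Delta^{N}_{k}}$ for every admissible $k$. (Only the boundary gap $k=0$, where $a=-1$ and $\phi$ is unbounded at $-1$, needs a small extra remark: since the Sturm comparison effectively compares $\phi$ only on compact subintervals of $(-1,1)$, the unboundedness there is harmless, and in the one degenerate sub-case where the assumption would force $2p-b=-1$ one uses the limiting form of Theorem~\ref{theo:SturmComparison}\ref{it:SCT2} on $[p,b)$ together with $u(-1)=0$.)

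I expect the central gap for odd $N$ to be the real obstacle: there $\phi$ turns from decreasing to increasing at $x=0$, so the Sturm Convexity Theorem is no longer available, and one is forced to reflect about the interior node $p=\xi^{N}_{k+1}$ rather than rely on monotonicity of $\phi$. The crux is the sign bookkeeping showing that, \emph{under the negated conclusion}, the reflected interval does not leave $(-1,1)$ and $\phi(2p-x)>\phi(x)$ holds on $(p,b)$ --- both consequences of the evenness and strict unimodality of $\phi$. Everything else is a routine unwinding of~\eqref{eq:LGLODEtrafo} and of the Sturm comparison theorem; in particular, Markoff's monotonicity~\eqref{eq:movetocenter} is not needed for this argument.
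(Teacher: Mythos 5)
Your proof is correct and, at bottom, uses the same mechanism as the paper's — the Sturm comparison theorem applied to the transformed ODE~\eqref{eq:LGLODEtrafo} via a reflection about the middle zero — but you carry out the reflection argument by hand rather than quoting the packaged Sturm Convexity Theorem~\ref{theo:SturmConvexity}. That turns out to matter: the paper's one-line appeal to Theorem~\ref{theo:SturmConvexity} requires $\phi$ to be non-increasing on an interval containing \emph{all three} consecutive zeros, which fails precisely for the central interval when $N$ is odd (there $\xi^{N}_{k+2}=-\xi^{N}_{k+1}>0$ and $\phi$ is increasing on $[0,1)$). You correctly isolate this as the only problematic case and observe that what the reflection argument really needs is not monotonicity of $\phi$ on the whole span but only the weaker structural fact that $\phi$ is even and strictly increasing in $\abs{x}$, which delivers $\phi(2p-x)>\phi(x)$ on $(p,b)$ as long as $p<0$. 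The contradiction bookkeeping (the reflected solution $v$ has no zero in $(p,b)$ under the negated conclusion, but Theorem~\ref{theo:SturmComparison}\ref{it:SCT2} forces one) is sound, as is your check that the contradiction hypothesis $2p-a\ge b$ keeps $2p-x$ inside $(-1,1)$. In short: same proof idea as the paper, but you fill a genuine (if small) gap in the published argument and correctly flag the only remaining technicality (the possible blow-up of $\psi$ at the right endpoint when $k=0$ and $2p-b=-1$), which can indeed be handled by the usual Wronskian/limiting version of Sturm comparison.
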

\begin{proof}
By symmetry it suffices to consider the left half of the intervall $[-1,1]$. The LGL points of order $N+1$ are the zeros of the polynomial $u(x)=(1-x^2) P^{(\frac{3}{2})}_{N}(x)$, which is a solution of the transformed ODE \eqref{eq:LGLODEtrafo}. Since $\phi^{(\frac{3}{2})}_{N}(x)$ is monotonically decreasing on $(-1,0]$, by the Sturm Convexity Theorem~\ref{theo:SturmConvexity}, we have
\begin{align*}
\abs{\Delta^{N}_{k+1}} = \xi^{N}_{k+2} - \xi^{N}_{k+1} > \xi^{N}_{k+1} - \xi^{N}_{k} = \abs{\Delta^{N}_{k}},
\end{align*}
which is the assertion. \qed
\end{proof}

Note that for $N=1$ there is only one interval and for $N=2$ the two intervals are of same length by symmetry.


\section{Main Results}\label{sect:main}

In this section we present the main results of this paper. As pointed out in the introduction, they are relevant for the development and analysis of robust preconditioners for high order DG-discretizations. These results can be roughly grouped into two parts, namely (A) results on refined properties of LGL grids themselves and (B) results on the relationship between LGL grids and certain families of \emph{associated} dyadic grids.

For ready use of the subsequent results in the DG context we formulate the results in this section for a generic interval $[a,b]$. As mentioned before LGL nodes and corresponding grids on such an interval are always understood as affine images of the respective quantities on $[-1,1]$ using for simplicity the same notation.

To be precise in what follows one should distinguish between a \emph{grid} and a \emph{partition} or mesh induced by a grid. In general, a grid $\cG$ in $[a,b]$ is always viewed as an ordered set $\{x_j\}_{j=0}^N$ of strictly increasing points $x_j$ - the nodes, with $x_0=a$ and $x_N=b$, which induces a corresponding partition $\cP(\cG)$ formed by the closed intervals $\Delta_j=[x_j,x_{j+1}]$, $0 \le j \le N-1$. The length $x_{j+1}-x_j$ of $\Delta_j$ is denoted by $\fsabs{\Delta_j}$. Conversely, a partition $\cP$ of $[a,b]$ into consecutive intervals induces an ordered grid $\cG(\cP)$ formed by the endpoints of the intervals.

\begin{definition}[Quasi-uniformity]
\label{def:quasi-uniform}
A family of grids $\{\cG_N\}_{N \in \N}$ is called \emph{locally quasi-uniform} if there exists a constant $C_g$ such that
\begin{align}
\label{eq:grid}
C_{g}^{-1} \le \frac{\abs{\Delta }}{\abs{\Delta'}} \le C_{g} \quad \text{for all} \quad \Delta, \Delta'\in \cP(\cG_N),\,\, \Delta \cap \Delta'\neq \emptyset,\,\, N\in\N .\end{align}
\end{definition}

The next notion concerns a certain comparability of two grids.

\begin{definition}[Local uniform equivalence]
\label{def:loc-unif-equiv}
Given two constants $0 < A \leq B$, the grid $\cG$ is said to be \emph{locally} $(A,B)$-\emph{uniformly equivalent} to the grid $\cG'$ if the following condition holds:
\begin{equation}
\label{eq:defeq}
\text{For all} \ \Delta \in \cP(\cG)\;, \quad \Delta' \in \cP(\cG') \;,\quad
\Delta \cap \Delta' \neq \emptyset
~\implies~
A \le {\frac{\abs{\Delta}}{\abs{\Delta'}}} \le B\;.
\end{equation}
\end{definition}


\subsection{Legendre-Gauss-Lobatto grids}\label{sect:LGLgrids}

The main result concerning LGL grids reads as follows.

\begin{theorem}
\label{thm:A}
(i) The family of LGL grids $\{\cGLGL_N\}_{N \in \N}$, is locally quasi-uniform
{and the constant $C_g {=C_g^\text{LGL}}$ in \eqref{eq:grid} satisfies
\begin{align}
\label{eq:Cg-est}
C_g^\text{LGL}\leq \max\left\{1, \frac{7\pi^2}{4}, \frac{3\pi^2}{4}, \frac{9\pi^2}{28}, \frac{486}{65}, \frac{\pi^2}{2}, \frac{49}{8} \right\} = \frac{7\pi^2}{4}.
\end{align}
}
\noindent
(ii) Assume that $M,N \in \N$ with $c\, N \leq M \leq N$ for some fixed constant $c>0$. Then, the LGL grid $\cGLGL_M$ is locally $(A,B)$-uniformly equivalent to the grid $\cGLGL_N$, with $A$ and $B$ depending on $c$ but not on $M$ and $N$.
\end{theorem}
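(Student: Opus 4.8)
The plan is to obtain both parts from sharp two-sided estimates for the LGL nodes and interval lengths expressed through the LGL angles. Recall from \eqref{eq:LGLODEtrafo} that the function $u(x)=(1-x^2)P^{(3/2)}_{N}(x)$, whose zeros are the LGL nodes of order $N+1$, solves $u''+\phi^{(3/2)}_{N}(x)u=0$ with $\phi^{(3/2)}_{N}(x)=\big((N+\tfrac32)^2-\tfrac14\big)/(1-x^2)$. Passing to the angular variable $\eta=\arccos(-x)$ turns this into a problem on $[0,\pi]$ with a coefficient that is a bounded perturbation of a constant, which is the classical route (Sturm comparison against $v''+\kappa^2 v=0$) to showing $\eta^{N}_{k+1}-\eta^{N}_{k}\simeq 1/N$ uniformly in $k$ and $N$; I would carry this out by comparing $\phi^{(3/2)}_{N}$ from above and below by constants times $(1-x^2)^{-1}$ on each LGL subinterval and invoking Theorem~\ref{theo:SturmComparison} together with Theorem~\ref{theo:SturmConvexity} (monotonicity, Theorem~\ref{theo:monotonicitylength}, handles the ordering). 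Since $|\Delta^{N}_k|=|\cos\eta^{N}_{k+1}-\cos\eta^{N}_k|$, the elementary identity $\cos\alpha-\cos\beta=2\sin\frac{\alpha+\beta}{2}\sin\frac{\beta-\alpha}{2}$ gives $|\Delta^{N}_k|\simeq \frac1N\,\sin\big(\tfrac12(\eta^{N}_k+\eta^{N}_{k+1})\big)$, i.e.\ the interval length behaves like $\frac1N$ times the distance of the interval to the nearer endpoint of $[-1,1]$ (in angle). The explicit CGL nodes $\cos\frac{j\pi}{N}$ promised in Section~\ref{sect:CGL} serve as the comparison grid that makes all the implied constants explicit, which is what yields the numerical bound in \eqref{eq:Cg-est}; I would not reproduce that bookkeeping here beyond noting that each term in the $\max$ corresponds to one regime (near the endpoint, one step in, the generic interior step, etc.).

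For part (i), local quasi-uniformity is then immediate: for two touching intervals $\Delta^{N}_k,\Delta^{N}_{k+1}$ the angular midpoints differ by $O(1/N)$ while each is at angular distance $\gtrsim 1/N$ from $0$ and $\pi$, so $\sin\big(\tfrac12(\eta^{N}_k+\eta^{N}_{k+1})\big)$ and $\sin\big(\tfrac12(\eta^{N}_{k+1}+\eta^{N}_{k+2})\big)$ are comparable with an absolute constant; combined with $\eta^{N}_{k+1}-\eta^{N}_k\simeq 1/N$ this gives the uniform bound on $|\Delta^{N}_{k+1}|/|\Delta^{N}_k|$. The extremal ratios occur at the first few and last few intervals, and matching them against the CGL grid produces exactly the constants listed.

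For part (ii), fix $\Delta\in\cP(\cGLGL_M)$ and $\Delta'\in\cP(\cGLGL_N)$ with $\Delta\cap\Delta'\neq\emptyset$, say both meeting near angle $\theta\in[0,\pi]$. By the estimate above, $|\Delta|\simeq \frac1M\sin\theta_M$ and $|\Delta'|\simeq\frac1N\sin\theta_N$ where $\theta_M,\theta_N$ are the relevant angular midpoints. Since $M\simeq N$ (because $cN\le M\le N$), the factors $\frac1M$ and $\frac1N$ are comparable with constants depending only on $c$; the only remaining point is that $\sin\theta_M\simeq\sin\theta_N$. This is where I expect the main obstacle: the intervals only \emph{overlap}, they need not be centered at the same point, and near the endpoints $\sin$ varies rapidly, so one must control $\theta_M$ against $\theta_N$. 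The way to handle it is to show that whenever $\Delta$ and $\Delta'$ intersect, their angular midpoints satisfy $|\theta_M-\theta_N|\lesssim \min\{\theta_M,\pi-\theta_M\}+1/N$, using that the endpoint intervals of \emph{both} grids have comparable angular size $\simeq 1/M\simeq 1/N$ and that interval lengths are monotone (Theorem~\ref{theo:monotonicitylength}); then $\sin\theta_M/\sin\theta_N$ is bounded above and below by constants depending only on $c$. Assembling the three comparabilities $|\Delta|/|\Delta'|\simeq (N/M)\cdot(\sin\theta_M/\sin\theta_N)$ yields the desired $A,B$ depending on $c$ alone, completing the proof. $\qed$
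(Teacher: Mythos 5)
Your sketch for part (i) mirrors the paper's own proof: bound the LGL angles two-sidedly, apply the prostapheresis identity, bound sines by their arguments, and split into cases (boundary interval, central interval(s), generic interior interval). The paper carries out exactly this bookkeeping in Property~\ref{lem:LGLeIlengths} and Property~\ref{lem:QuasiUniformity}, the only difference being that the two-sided angle bound $\eta_k^N \in [\pi k/N,\, \pi(2k+1)/(2N+1)]$ is taken from Markoff's theorem (via the zeros of $U_{N-1}$ at $\lambda=1$) and S\"undermann's lemma rather than re-derived by a Sturm comparison in the angular variable; you simply decline to work out the seven explicit constants in \eqref{eq:Cg-est}, which is the substance of that part.

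For part (ii) there is a genuine gap. You correctly isolate the crucial step, namely that $\sin\theta_M \simeq \sin\theta_N$ for the angular midpoints of overlapping intervals of $\cGLGL_M$ and $\cGLGL_N$, but you do not prove it. The inequality you float, $\fsabs{\theta_M - \theta_N} \lesssim \min\{\theta_M,\pi-\theta_M\} + 1/N$, does not by itself give a two-sided bound on $\sin\theta_M/\sin\theta_N$ unless the implied constant in front of $\theta_M$ is strictly less than one, and you give no mechanism for controlling it; moreover, establishing it directly for LGL angles is hard precisely because these are only known through inequalities. The paper sidesteps this by a two-step reduction. First it proves local $(A,B)$-uniform equivalence for the CGL grids (Theorem~\ref{prop:lgl-uniequiv-cheb}), where the angles are \emph{exactly} $k\pi/N$: overlap of $\Lambda_k^N$ and $\Lambda_\ell^M$ forces the arithmetic identity $\ell_x = (M/N) k_x$ between fractional indices, from which the comparability of $(k+1/2)/N$ and $(\ell+1/2)/M$ — and hence of the sines, after the elementary lemma at the end of that proof — follows cleanly. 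Second, Lemma~\ref{lem:CGL} shows $\fsabs{\Delta_k^N}\simeq\fsabs{\Lambda_k^N}$ uniformly, using $\zeta_k^N \le \xi_k^N \le \zeta_{k+1}^N$ together with the explicit lower bound from Property~\ref{lem:LGLeIlengths}. Corollary~\ref{prop:lgl-uniequiv-leg} then chains the equivalences (together with quasi-uniformity from part (i) and Property~\ref{property:CGL-uniform}) to obtain the LGL statement. Your sketch mentions CGL nodes only as a device for explicit constants in part (i); their real role in the paper is to make the angle comparison in part (ii) arithmetic and hence tractable, and without that (or an equally precise substitute) your last step remains unproved.
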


The fact (i) that LGL grids are quasi-uniform seems to be folklore. Since we could not find a suitable reference we restate this fact here as a convenient reference for \cite{BCCD1} and include later a proof with a concrete bound for the constant $C_g$. Claim (ii) is essential for establishing optimality of preconditioners for high order DG discretization with varying polynomial degrees, see \cite{BCCD1}.

\begin{remark}
Numerical experiments show that the quotient of the length of two consecutive LGL intervals is maximized for $\frac{\abs{\Delta^{N}_1}}{\abs{\Delta^{N}_0}}$ and this term grows monotonically in $N$. The value of the (smallest) constant $C_{g}$ is approximately $2.36$.
\end{remark}

A further important issue concerns the comparison of an LGL grid with a ``stretched'' version of itself. To explain this we introduce the \emph{stretching operator} ${L =} L_{[a,b]}:[a,(a+b)/2] \rightarrow [a,b], \, x \mapsto 2x-a$. There is an apparent relation concerning the behavior of LGL grids under stretching which is needed later for establishing subsequent results on associated dyadic grids. The relevant property, illustrated by Figure~\ref{fig:comparisonStretched}, reads as follows.

\begin{definition}
\label{def:S}
A family of symmetric grids $\{\cG_N\}_{N \in \N}$ on $[a,b]$ has property \SbarN for some $\bar N\in \N$, if for any $\cG_{N}$ with $N \le \bar N$ the following holds: For any $I\in \cP(\cG_{N})$ with $I \subset (a,a+(b-a)/4]$ and any $I' \in \cP(\cG_{N})$ such that $L(I)\cap I'\neq \emptyset$, one has $\fsabs{I'} \leq \fsabs{L(I)}$.
\end{definition}

{We have verified numerically that property \SbarN holds for LGL grids up to order $\bar N=2000$. This supports the following conjecture.}

\begin{conjecture}\label{conj:LGLComparisonStretched}
The family of LGL grids $\{\cGLGL_N\}_{N \in \N}$ has property \SbarN for all $\bar N\in \N$.
\end{conjecture}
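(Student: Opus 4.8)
\noindent\emph{Towards a proof.} The plan is as follows. By the affine invariance built into Definition~\ref{def:S} it suffices to work on $[-1,1]$, and by symmetry only on its left half. Write $\xi^N_k=-\cos\eta^N_k$ and set $m^N_k:=\tfrac12(\eta^N_k+\eta^N_{k+1})$, $h^N_k:=\tfrac12(\eta^N_{k+1}-\eta^N_k)$, so that $\abs{\Delta^N_k}=2\sin m^N_k\,\sin h^N_k$. The stretching operator acts on angles via $\eta\mapsto\psi(\eta):=\arccos(2\cos\eta-1)$; a direct computation gives $\sin^2\psi(\eta)=4\cos\eta\,(1-\cos\eta)$ and $\psi'(\eta)=\sqrt{1+\sec\eta}$, so $\psi$ is convex on $(0,\pi/2]$ and, since $\psi(\pi/3)=\pi/2$, one has $\sqrt2\,\eta\le\psi(\eta)\le\tfrac32\,\eta$ on $(0,\pi/3]$. (On $(-1,0]$ the stretched LGL grid and the LGL grid are the zero sets of solutions of $y''+\widetilde\phi\,y=0$ and $y''+\phi\,y=0$ with $\widetilde\phi<\phi$, so Theorem~\ref{theo:SturmComparison} could serve as an alternative point of departure.)

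Two structural observations reduce the statement considerably. First, the angle spacings $2h^N_k$ are \emph{decreasing} in $k$ on the left half: a function vanishing exactly at the $\eta^N_k$ solves the $\eta$-variable (Liouville) form $w''+\bigl[(N+\tfrac12)^2-\tfrac{3}{4\sin^2\eta}\bigr]w=0$ of \eqref{eq:LGLODEtrafo}, whose coefficient is non-decreasing on $(0,\pi/2]$, so Theorem~\ref{theo:SturmConvexity} (applied after the reflection $\eta\mapsto-\eta$) gives $h^N_0>h^N_1>\dots$; consequently $\eta^N_{k+1}/\eta^N_k$ is decreasing in $k$ as well. Second, because $\psi(\eta)>\eta$, the \emph{rightmost} LGL interval $\Delta^N_j$ meeting $L(\Delta^N_k)=[\psi(\eta^N_k),\psi(\eta^N_{k+1})]$ has $j\ge k+1$, so $h^N_j\le h^N_k$, and by Theorem~\ref{theo:monotonicitylength} it is also the \emph{largest} interval meeting $L(\Delta^N_k)$; finally $I=\Delta^N_k\subset(-1,-1/2]$ forces $k\ge1$, since $\Delta^N_0$ contains $-1$ (this restriction is essential, as $\abs{\Delta^N_1}/\abs{\Delta^N_0}$ tends to the constant $\approx2.36$ of the Remark following Theorem~\ref{thm:A}, which exceeds $2$). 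Using $\abs{\Delta^N_j}=2\sin m^N_j\sin h^N_j\le2\sin m^N_j\sin h^N_k$, property \SbarN reduces to $\sin m^N_j\le2\sin m^N_k$, and for this it suffices to show $m^N_j\le2\,m^N_k$ (since $\sin 2t\le2\sin t$ and $\sin t\le1$).

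For the quantitative heart, $\eta^N_j\le\psi(\eta^N_{k+1})$ and $h^N_j\le h^N_k$ give $m^N_j\le\psi(\eta^N_{k+1})+h^N_k$, while $2m^N_k=\eta^N_k+\eta^N_{k+1}$; so $m^N_j\le2m^N_k$ follows from $\psi(\eta^N_{k+1})\le\tfrac32\eta^N_k+\tfrac12\eta^N_{k+1}$, and by $\psi(b)\le\tfrac32 b$ from $\eta^N_{k+1}/\eta^N_k\le\tfrac32$. As $\eta^N_{k+1}/\eta^N_k$ is decreasing in $k$, this covers every $k\ge2$ once $\eta^N_3/\eta^N_2\le\tfrac32$, which holds asymptotically ($\eta^N_3/\eta^N_2\to j_{1,3}/j_{1,2}\approx1.45$, with $j_{1,m}$ the $m$-th positive zero of $J_1$). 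The case $k=1$ is genuinely special ($\eta^N_2/\eta^N_1\to j_{1,2}/j_{1,1}\approx1.83>\tfrac32$): for $N$ large the rightmost meeting index is $j=2$, because $j\ge3$ would require $\xi^N_3\le2\xi^N_2+1$, i.e.\ $1+\xi^N_3\le2(1+\xi^N_2)$, which near the endpoint reads $(\eta^N_3/\eta^N_2)^2\le2$, contrary to $\eta^N_3/\eta^N_2\to1.45>\sqrt2$; and for $j=2$ the bound $m^N_2\le2m^N_1$ is equivalent to $\eta^N_3\le2\eta^N_1+\eta^N_2$, which follows from the first observation since $\eta^N_3-\eta^N_2=2h^N_2\le2h^N_0=\eta^N_1$. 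All of this requires $N$ above a threshold $N_0$ beyond which the Mehler--Heine asymptotics near the endpoint are quantitatively in force; the orders $N\le N_0$---including those for which \SbarN is vacuous because no LGL interval lies in the first quarter---would be covered by the numerical verification up to $\bar N=2000$, provided $N_0$ can be kept below that.

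The hard part is precisely to make this uniform in $N$: it needs sharp, fully explicit two-sided bounds on the first few LGL nodes $\xi^N_1,\xi^N_2,\xi^N_3$---in particular on the ratio $\eta^N_3/\eta^N_2$ and on the sign of $1+\xi^N_3-2(1+\xi^N_2)$---together with an explicit $N_0$. The controlling limits are ratios of consecutive Bessel zeros $j_{1,m}$, and the decisive inequalities hold only with a narrow margin ($j_{1,3}^2\approx103.5$ versus $2j_{1,2}^2\approx98.4$, and $\eta^N_3/\eta^N_2\to1.45$ against the thresholds $\sqrt2$ and $\tfrac32$). Converting the $O(1/N)$, respectively near the endpoint $O(1/N^2)$, corrections into rigorous bounds tight enough to respect these margins uniformly is the step we have not been able to carry out, which is why the statement is recorded here only as a conjecture supported by computation.
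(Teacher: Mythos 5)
The paper does not prove this statement: it is explicitly formulated as a conjecture, supported only by numerical verification up to $\bar N=2000$, and no proof or proof sketch appears anywhere in the text. Your write-up is therefore not competing with a proof in the paper but supplies genuine structural progress beyond what the paper offers, and you are correctly cautious in recording the outcome as a conjecture. Your intermediate steps check out: the Liouville normal form $w''+[(N+\tfrac12)^2-\tfrac{3}{4\sin^2\eta}]w=0$ of \eqref{eq:LGLODEtrafo} under $x=-\cos\eta$; the decreasing angle-gap property $h^N_0>h^N_1>\cdots$ from Theorem~\ref{theo:SturmConvexity} applied to the non-decreasing coefficient on $(0,\pi/2]$; the reduction to the rightmost overlapping interval via Theorem~\ref{theo:monotonicitylength}; the derivative $\psi'(\eta)=\sqrt{1+\sec\eta}$ and the ensuing $\psi(\eta)\le\tfrac32\eta$ on $(0,\pi/3]$ by convexity; the reduction of \SbarN to $\psi(\eta^N_{k+1})\le\tfrac32\eta^N_k+\tfrac12\eta^N_{k+1}$ and then to $\eta^N_{k+1}/\eta^N_k\le\tfrac32$ for $k\ge2$; and, assuming $j=2$, the clean treatment of $k=1$ via $\eta^N_3-\eta^N_2=2h^N_2<2h^N_0=\eta^N_1$.

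Two gaps keep this from being a proof, the first of which you acknowledge yourself. The principal one is making the two controlling inequalities uniform in $N$: $\eta^N_3/\eta^N_2\le\tfrac32$ (covering $k\ge2$ via the monotonicity of $\eta^N_{k+1}/\eta^N_k$ in $k$), and $1+\xi^N_3>2(1+\xi^N_2)$ (so that $j=2$ when $k=1$). You establish these only through the Bessel-zero limits $\eta^N_k\to j_{1,k}/N$, and the margins are narrow ($j_{1,3}/j_{1,2}\approx1.45$ against the thresholds $3/2$ and $\sqrt2$; $j_{1,3}^2\approx103.5$ against $2j_{1,2}^2\approx98.4$); converting the $O(1/N)$ corrections into explicit, rigorous two-sided bounds that respect these margins for all $N$ beyond a computable $N_0$ is exactly the missing step. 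The second gap you do not flag: the decreasing-gap claim rests on Sturm convexity with all three consecutive angles inside $(0,\pi/2]$, so when $j$ is the central index for odd $N$ (possible when $\Delta^N_k$ sits near $-1/2$, pushing $\psi(\eta^N_{k+1})$ close to $\pi/2$), the bound $h^N_j\le h^N_k$ involves $\eta^N_{j+1}>\pi/2$ and needs a separate symmetry-based argument. Neither issue signals a wrong approach — the angle-variable route through $\psi(\eta)=\arccos(2\cos\eta-1)$ is natural and promising — but they explain why the statement remains open both in the paper and in your attempt.
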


\begin{figure}[htb]
\centering
\includegraphics[width=0.5\linewidth]{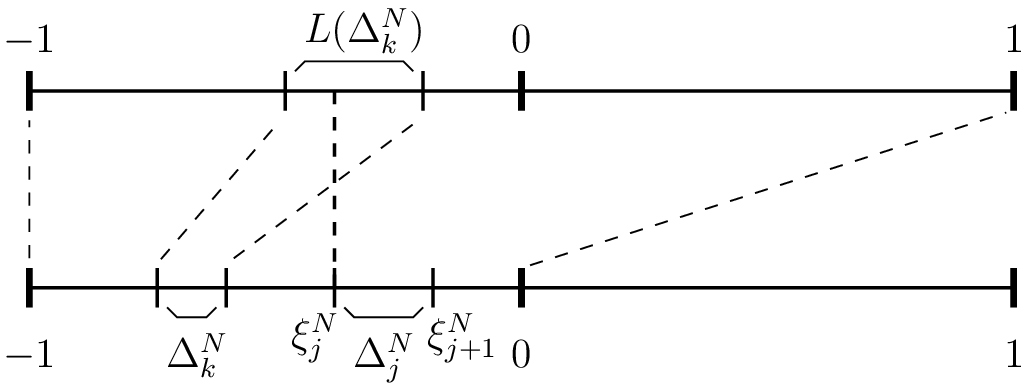}
\caption{Comparison of LGL grid $\cGLGL_N$ with its stretched version.}
\label{fig:comparisonStretched}
\end{figure}


\subsection{Associated dyadic grids}\label{sect:B}

LGL grids are unfortunately not nested, i.e., an interval in $\cP(\cGLGL_N)$ cannot be written as the union of intervals in $\cP(\cGLGL_{N+1})$ which is a severe impediment on the use of LGL grids in the context of preconditioning. Hierarchies of nested grids are conveniently obtained by successive dyadic splits of intervals. More precisely, a single split, i.e., replacing an interval $I$ in a given grid by the two intervals $I', I''$ obtained by subdividing $I$ at its midpoint, gives rise to a \emph{refinement} of the current grid. Hence successive refinements of some initial interval give rise to a sequence of nested grids. We shall see next that that one can associate in a very strong sense with any LGL grid $\cGLGL_N$ a dyadic grid $\cD_N$ that inherits relevant features from $\cGLGL_N$ while in addition the family $\{\cD_N\}_{N\in\N}$ is \emph{nested}.

Moreover, when using the associated dyadic grids {in the context of the Auxiliary Space Method (see \cite{Oswald,Xu,BCCD1}) for preconditioning the systems arising form DG-discretizations, one may obtain meshes with hanging nodes. It then turns out that} it is important that the dyadic grids are ``closed under stretching'' in the following sense.
\begin{definition} [Closedness under stretching]
\label{def:closedness}
Consider the stretching operator $L_{[a,b]}$ introduced above. Then we call a dyadic grid $\cD$ on $[a,b]$
\emph{closed under stretching} if $L_{[a,b]}(\cD \cap [a,(a+b)/2]) \subset \cD$.
\end{definition}

\begin{theorem}
\label{thm:B}
Given any LGL grid $\cGLGL_N$ on any interval $[a,b]$, there exists a dyadic grid $\cD_N$ on $[a,b]$ with the following properties:
\begin{itemize}
\item[(i)]
$\cD_N$ is symmetric and monotonic in the sense of Theorem~\ref{theo:monotonicitylength}, i.e., the intervals $D\in \cP(\cD_N)$ increase in length from left to right in the left half of $[a,b]$.
\item[(ii)]
The family of grids $\{\cD_N\}_{N\in\N}$ is locally quasi-uniform.
\item[(iii)]
The family of grids $\{\cD_N\}_{N\in\N}$ is nested.
\item[(iv)]
For each $N\in\N$ the grid $\cD_N$ is locally $(A,B)$-uniformly equivalent to $\cGLGL_N$, where the constants $A$ and $B$ do not depend on $N$. In particular, one has $N\sim \#(\cD_N)$, uniformly in $N$.
\item[(v)]
Whenever the family of LGL grids $\{\cGLGL_N\}_{N \in \N}$ has property \SbarN for $\bar N \in \N$, then the dyadic grids $\cD_N$ are closed under stretching for $N\leq \bar N$.
\end{itemize}
\end{theorem}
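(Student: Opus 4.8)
The plan is to carry out the programme announced for Section~\ref{sect:dyadic}: introduce a device that converts a prescribed ``target mesh-size function'' into a uniquely determined dyadic partition, feed it a target modelled on the known behaviour of LGL interval lengths, and then verify (i)--(v) in turn. For a given order $N$ on $[a,b]$ I would use the \emph{explicit} target $\bar h_N(x):=\max\{2(b-a)N^{-2},\,N^{-1}\sqrt{(x-a)(b-x)}\}$ and record at the outset four elementary facts: $\bar h_N$ is symmetric about $(a+b)/2$; it is non-decreasing on $[a,(a+b)/2]$; it is pointwise non-increasing in $N$; and it is Lipschitz with a constant \emph{independent of $N$} (the cap $2(b-a)N^{-2}$ removes the endpoint singularity of $\sqrt{(x-a)(b-x)}$, since on the set where the square-root term dominates one has $\sqrt{(x-a)(b-x)}\gtrsim (b-a)N^{-1}$ and hence bounded derivative after division by $N$). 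The one substantial input is that $\bar h_N$ is \emph{equivalent to the LGL mesh}: by the sharp two-sided bounds for LGL interval lengths derived in the course of proving Theorem~\ref{thm:A} (in essence $|\Delta^N_k|\simeq N^{-1}\sqrt{(\xi^N_{k}-a)(b-\xi^N_k)}+(b-a)N^{-2}$, uniformly in $k$ and $N$), the interval $\Delta^N_{k(x)}$ of $\cGLGL_N$ containing $x$ satisfies $|\Delta^N_{k(x)}|\simeq\bar h_N(x)$ with absolute constants. Given this, define $\cD_N$ by \emph{greedy dyadic bisection}: start from $[a,b]$; as long as some current dyadic interval $D$ satisfies $\abs{D}>c_*\min_{x\in D}\bar h_N(x)$ for a fixed constant $c_*$, bisect it. Since $\bar h_N\ge 2(b-a)N^{-2}>0$ the process terminates, producing a dyadic partition $\cP(\cD_N)$ in which every $D$ obeys $\abs{D}\le c_*\min_D\bar h_N$ while its parent violates the analogous inequality.

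Properties (i)--(iv) then follow by soft arguments. Symmetry of $\cD_N$ is inherited from that of $\bar h_N$ together with the deterministic character of the procedure. For monotonicity in (i): were $D$ (closer to $a$) and $D'$ adjacent in $\cP(\cD_N)$ with $\abs{D}>\abs{D'}$, the ancestor $D''$ of $D'$ at the level of $D$ would be exactly the right neighbour of $D$ at that level, hence bisected during the construction, yet $\abs{D''}=\abs{D}\le c_*\min_D\bar h_N\le c_*\min_{D''}\bar h_N$ by monotonicity of $\bar h_N$ (the central interval being handled by symmetry) -- a contradiction. Next one shows $\abs{D}\simeq\bar h_N(x)$ for all $x\in D$, with constants depending only on $c_*$: the upper bound is the stopping rule; for the lower bound, the parent $P$ of $D$ was bisected, so some $y\in P$ has $\bar h_N(y)<\abs{P}/c_*=2\abs{D}/c_*$, whence $\bar h_N(x)\le\bar h_N(y)+C\abs{x-y}\le(2/c_*+2C)\abs{D}$ for every $x\in D$ by the uniform Lipschitz bound. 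Combining with $\abs{\Delta^N_{k(x)}}\simeq\bar h_N(x)$ yields (iv): if $D\cap\Delta^N_k\neq\emptyset$ then $\abs{D}\simeq\abs{\Delta^N_k}$, i.e.\ $\cD_N$ is $(A,B)$-uniformly equivalent to $\cGLGL_N$ with $A,B$ independent of $N$; and since each interval of either partition meets only $O(1)$ intervals of the other, $\#(\cD_N)\simeq\#(\cGLGL_N)=N+1$. Quasi-uniformity of the family (ii) is a consequence of (iv) and Theorem~\ref{thm:A}(i): for adjacent $D,D'\in\cP(\cD_N)$ one picks LGL intervals meeting $D$, respectively $D'$, that are equal or adjacent, hence comparable with a constant uniform in $N$, giving $\abs{D}\simeq\abs{D'}$ uniformly. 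Finally, nestedness (iii): since $\bar h_{N+1}\le\bar h_N$ pointwise, a straightforward induction along the bisection process shows every dyadic interval bisected when building $\cD_N$ is also bisected when building $\cD_{N+1}$; hence each interval of $\cP(\cD_{N+1})$ lies inside one of $\cP(\cD_N)$, i.e.\ $\cD_N\subseteq\cD_{N+1}$.

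The genuinely delicate point is (v). Assuming the LGL family has property \SbarN, one must prove $L_{[a,b]}(\cD_N\cap[a,(a+b)/2])\subseteq\cD_N$ for $N\le\bar N$; by symmetry this amounts to showing that every node $p\le(a+b)/2$ of $\cD_N$ has $L_{[a,b]}(p)$ again a node of $\cD_N$, equivalently that every $L_{[a,b]}(D)$ with $D\in\cP(\cD_N)$, $D\subset[a,(a+b)/2]$, is a union of intervals of $\cP(\cD_N)$. As $\abs{L_{[a,b]}(D)}=2\abs{D}$, this splits into (a) the dyadic level of $\cD_N$ on $L_{[a,b]}(D)$ is at least that of $D$ minus one -- so $\cD_N$ is never ``more than twice as refined'' on $L_{[a,b]}(D)$ as on $D$ -- and (b) the endpoints of $L_{[a,b]}(D)$ are themselves $\cD_N$-nodes. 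Both must be read off from the defining bisection once $\bar h_N\circ L_{[a,b]}$ is controlled against $\bar h_N$, and this is exactly where \SbarN enters: it provides the \emph{sharp} factor-two estimate $\abs{\Delta^N_k}\le 2\abs{I}=\abs{L_{[a,b]}(I)}$ whenever $I$ lies in the left quarter $(a,a+(b-a)/4]$ and $\Delta^N_k$ meets $L_{[a,b]}(I)$ (outside that quarter $\bar h_N\simeq(b-a)N^{-1}$ is essentially constant and the claim is immediate). The obstacle -- which I expect to be the bulk of the work, far more than (i)--(iv) -- is that the dyadic target is only \emph{equivalent} to the LGL mesh, so transferring the factor two through the greedy bisection without diluting it into an $\simeq$ forces a careful treatment: the alignment bookkeeping in (a)--(b), possibly a refinement of the construction (a choice of $c_*$ tied to the implied constants, or a two-step ``dyadic completion'' that commutes with doubling), and the use of monotonicity (i) together with the uniform equivalence (iv) to localise the comparison. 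This completes the proof modulo those verifications.
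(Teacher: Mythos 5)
Your construction departs from the paper's in a structurally significant way. The paper runs Algorithm~\ref{algo:Dyadic} against the \emph{actual} LGL grid $\cGLGL_N$, so the split/stop decision depends on the non-monotone (in $N$) family of LGL interval lengths; as a result the grids $\algo{DyadicGrid}(\cGLGL_N,\{a,b\},\alpha)$ are \emph{not} automatically nested (the paper notes a failure already at $N^-=19,\,N^+=20$), nestedness has to be forced by the inductive seeding $\cD_{j+1}=\algo{DyadicGrid}(\cGLGL_{j+1},\cD_j,\alpha)$, and then Property~\ref{lem:dyadic-2} requires a separate argument, leaning on the displacement Theorem~\ref{theo:DispLGL2}, to recover uniform equivalence for the seeded construction. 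You instead drive the bisection by a smooth surrogate $\bar h_N$ which is pointwise non-increasing in $N$; this buys nestedness for free and dispenses with all of the inductive-seeding machinery, so your (iii) is genuinely cleaner. The cost is a lemma you use but do not prove, namely that $\fsabs{\Delta^N_{k(x)}}\simeq\bar h_N(x)$ uniformly in $k,N,x$: this is highly plausible from Property~\ref{lem:LGLeIlengths} and Lemma~\ref{lem:CGL} but is not a citation, and it needs a short self-contained verification (including the crossover between the $N^{-2}$ cap and the square-root regime). Granting that lemma, your arguments for (i), (ii) and (iv) — symmetry/monotonicity of $\bar h_N$, the $1/4$-Lipschitz bound, and the ``parent was split'' lower bound — are sound.

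The genuine gap is (v). You correctly identify it as the delicate point, but you do not supply an argument: you list candidate devices (``alignment bookkeeping'', ``two-step dyadic completion'') and close with ``modulo those verifications''. That is not a proof. What you seem to have missed is that, for \emph{your} construction, (v) is in fact easier than in the paper and does not need \SbarN at all. The surrogate satisfies the sharp stretching inequality $\bar h_N(L_{[a,b]}(x))\le 2\bar h_N(x)$ for all $x\in[a,(a+b)/2]$, since $(L(x)-a)\,(b-L(x))=4(x-a)\big(\tfrac{a+b}{2}-x\big)\le 4(x-a)(b-x)$ and the cap $2(b-a)N^{-2}$ trivially doubles. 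Hence $\min_{L(D)}\bar h_N\le 2\min_{D}\bar h_N$ for any $D\subset[a,(a+b)/2]$, and an induction on the dyadic level shows: if $D\subset[a,(a+b)/2]$ is split, then (by the inductive hypothesis applied to its parent) $L(D)$ lies in the tree, and $\fsabs{L(D)}=2\fsabs{D}>2c_*\min_{D}\bar h_N\ge c_*\min_{L(D)}\bar h_N$, so $L(D)$ is split as well. Since every node $p\in\cD_N$ with $a<p<(a+b)/2$ is the midpoint of some split $D\subset[a,(a+b)/2]$, and $L(a)=a$, $L\big(\tfrac{a+b}{2}\big)=b$ are trivially nodes, this yields $L(\cD_N\cap[a,(a+b)/2])\subset\cD_N$ unconditionally — stronger than the paper's Proposition~\ref{prop:stretched}, which is conditional on Conjecture~\ref{conj:LGLComparisonStretched}. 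So the gap is real, but it is repairable, and once filled your route gives a complete proof of Theorem~\ref{thm:B} that is simpler than the paper's in parts (iii) and (v), at the cost of the extra lemma equating $\bar h_N$ with the LGL mesh-size function.
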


\begin{remark}
\label{rem:stretch}
As mentioned before, \SbarN has been confirmed numerically to hold for $\bar N =2000$, i.e., the dyadic grids $\cD_N$ are closed under stretching for $N\leq 2000$ which covers all cases of practical interest in the context of preconditioning. Conjecture~\ref{conj:LGLComparisonStretched} actually suggests that all dyadic grids $\cD_N$ referred to in Theorem~\ref{thm:B} are closed under stretching.
\end{remark}
For the application of the above results in the context of preconditioning \cite{BCCD1} quantitative refinement of Theorem~\ref{thm:B} (ii) is important. We call a dyadic grid $\cD$ \emph{graded} if any two adjacent intervals $I, I'$ in $\cP(\cD)$ satisfy $\fsabs{I}/\fsabs{I'}\in \{1/2,1,2\}$, i.e., they differ in generation by at most one.
\begin{remark}
\label{rem:graded}
The dyadic grids $\cD_N$ referred to in Theorem~\ref{thm:B} are graded for $N\leq \bar N= 2000$. Moreover, we conjecture that gradedness holds actually for \emph{all} $N\in \N$, see the discussion following Conjectures~\ref{conj:Quotients} and~\ref{conj:QuotientsBessel} in Section~\ref{sect:QuasiuniformityLGL} and Proposition~\ref{prop:graded}.
\end{remark}

The remainder of the paper is essentially devoted to proving Theorems~\ref{thm:A} and~\ref{thm:B}. This requires on the one hand refining our understanding of LGL grids, which, in particular, leads to some results that are perhaps interesting in their own right, and, on the other hand, developing and analyzing a concrete algorithm for creating $\cD_N$ for any given $N$.

Since all the properties claimed in Theorems~\ref{thm:A} and~\ref{thm:B} are invariant under affine transformations it suffices to consider from now on only the reference interval $[a,b]=[-1,1]$.


\section{Proof of Theorem~\ref{thm:A}}\label{sect:ProofA}

In this section we proceed establishing further quantitative facts about LGL grids needed for the proof of Theorem~\ref{thm:A}. We address first interrelations of LGL nodes within a single grid $\cGLGL_N$ of arbitrary order $N$. We begin recalling the following estimates on LGL angles which can be found, for instance, in \cite{CHQZ2006} and \cite{Suendermann1983}
\begin{equation}
\label{eq:LGLa_estimates}
\begin{aligned}
\eta^{N}_{k} \in \left[\ueta^{N}_{k}, \oeta^{N}_{k}\right] \quad \text{for} \quad 1 \le k \le \floor{\frac{N-1}{2}}\\
\quad \text{with} \quad
\ueta^{N}_{k} := \pi \frac{k}{N}
\quad \text{and} \quad
\oeta^{N}_{k}:=\pi \frac{2k+1}{2N+1} .
\end{aligned}
\end{equation}

The lower bound follows from \eqref{eq:movetocenter} and the fact that the ultraspherical polynomial $P^{(1)}_{N-1}$ can, up to a non-zero normalization factor, be identified as the Chebyshev polynomial of second kind $U_{N-1}$, which has the zeros $\cos \ueta^{N}_{k}$ for $1 \le k \le N-1$, see e.g. \cite[(2.3.15) on p. 77]{CHQZ2006}. The upper bound is given in \cite[Lemma 1]{Suendermann1983}.

Note that the lower bound obtained from \eqref{eq:movetocenter} is sharper than the lower estimate in \cite[Lemma 1]{Suendermann1983}.


\subsection{Legendre-Gauss-Lobatto intervals and their spacings}

Next we consider the spacings between two subsequent LGL nodes. To this end, in the following, we will repeatedly make use of the following elementary estimates: for any $x \in [0,\frac{\pi}{2}]$ one has
\begin{equation} \label{eq:estimatetrig}
a) \ \frac{2}{\pi} x \le \sin x \le x \;; \quad
b) \ 1-\frac{2}{\pi} x \le \cos x \le 1 \;; \quad
c) \ 1 -\frac{1}{2}x^2 \le \cos x \le 1-\frac{4}{\pi^2}x^2 \;.
\end{equation}
Moreover, we shall use the prostapheresis trigonometric identity
\begin{equation}
\cos x - \cos y = -2 \sin\left(\frac{x+y}{2}\right) \sin\left(\frac{x-y}{2}\right) \label{eq:TrigDiffCos} \;.
\end{equation}
Note that for $x,y \in [0,\frac{\pi}{2}]$ and $x \ge y$ the arguments of the sine function $\frac{x+y}{2}$ and $\frac{x-y}{2}$ are both within $[0,\frac{\pi}{2}]$.

Recall that $N \ge 1$ as the LGL nodes include the boundary points. For $N=1$ the LGL nodes are $\xi^1_0=-1$ and $\xi^1_1=-1$, for $N = 2$ we have $\xi^2_0=-1$, $\xi^2_1=0$ and $\xi^2_1=-1$. For higher order LGL nodes, unfortunately there is no simple analytical expression. In order to study LGL intervals for larger $N$, we estimate the length of the LGL intervals from below and above using \eqref{eq:LGLa_estimates}. For the proof we consider separately the cases where the position of a LGL node is the endpoint or the center of the interval $[-1,1]$.

\begin{property}[Estimates for LGL interval lengths]\label{lem:LGLeIlengths}
For $N\ge 5$ and $1\le k \le \floor{\frac{N-1}{2}-1}$, we can bound the length of the interval $\Delta^{N}_k$ by
\begin{multline}
2 \sin\left(\frac{\pi}{2} \frac{4kN+k+3N+1}{N(2N+1)} \right) \sin\left(\frac{\pi}{2} \frac{N+k+1}{N(2N+1)} \right)
\le\\
\abs{\Delta^{N}_{k}}
\le 2 \sin\left(\frac{\pi}{2} \frac{4kN+3N+k}{N(2N+1)} \right) \sin\left(\frac{\pi}{2} \frac{3N-k}{N(2N+1)} \right).
\label{eq:LGLlengthGeneral}
\end{multline}
For $N\ge 3$ the length of the boundary interval $\Delta^{N}_0$ can be estimated by
\begin{equation}
\begin{aligned}
\frac{4}{N^2} \le \abs{\Delta^{N}_0} \le \frac{9\pi^2}{2(2N+1)^2}.
\label{eq:LGLlength0}
\end{aligned}
\end{equation}
Moreover, if $N$ is odd and $N\ge 3$, we get
\begin{equation}
\begin{aligned}
\frac{2}{2N+1} \le \abs{\Delta^{N}_{\floor{\frac{N}{2}}}} \le \frac{4N-2}{N^2}\;,
\label{eq:LGLlengthInnerOdd}
\end{aligned}
\end{equation}
whereas if $N$ is even and $N \ge 4$, we have
\begin{equation}
\begin{aligned}
\frac{3}{2N+1} \le \abs{\Delta^{N}_{\frac{N}{2}-1}} \le \frac{4N-4}{N^2}.
\label{eq:LGLlengthInnerEven}
\end{aligned}
\end{equation}
\end{property}
\begin{proof}
From \eqref{eq:LGLa_estimates} for $N\ge 5$ and $1\le k \le \floor{\frac{N-1}{2}-1}$ we can derive the upper estimate
\begin{align*}
\begin{aligned}
\abs{\Delta^{N}_{k}}
&= \xi^{N}_{k+1} - \xi^{N}_{k}
\le -\cos \oeta^{N}_{k+1} + \cos \ueta^{N}_{k}
= -\cos \left( \pi \frac{2k+3}{2N+1}\right) + \cos \left(\pi \frac{k}{N}\right)\\
&\stackrel{\eqref{eq:TrigDiffCos}}{=} -2 \sin\left(\frac{\pi}{2} \left(\frac{k}{N} + \frac{2k+3}{2N+1} \right)\right)
\sin\left(\frac{\pi}{2} \left(\frac{k}{N} - \frac{2k+3}{2N+1} \right)\right) \\
&= 2 \sin\left(\frac{\pi}{2} \frac{4kN+3N+k}{N(2N+1)} \right) \sin\left(\frac{\pi}{2} \frac{3N-k}{N(2N+1)} \right).
\end{aligned}
\end{align*}
Analogously, we have in the same case as lower estimate
\begin{align*}
\begin{aligned}
\abs{\Delta^{N}_{k}}
&= \xi^{N}_{k+1} - \xi^{N}_{k}
\ge -\cos \ueta^{N}_{k+1} + \cos \oeta^{N}_{k}\\
&= -\cos\left(\pi \frac{k+1}{N}\right) + \cos\left(\pi \frac{2k+1}{2N+1}\right)\\
&\stackrel{\eqref{eq:TrigDiffCos}}{=} -2 \sin\left(\frac{\pi}{2} \left(\frac{2k+1}{2N+1} + \frac{k+1}{N}\right)\right)
\sin\left(\frac{\pi}{2} \left(\frac{2k+1}{2N+1} - \frac{k+1}{N}\right)\right)\\
&= 2 \sin\left(\frac{\pi}{2} \frac{4kN+k+3N+1}{N(2N+1)} \right) \sin\left(\frac{\pi}{2} \frac{N+k+1}{N(2N+1)} \right).
\end{aligned}
\end{align*}

In the special case of the boundary interval $\Delta^{N}_0=[\xi^{N}_0,\xi^{N}_1]$ for $N\ge 3$, we have
\begin{align*}
\abs{\Delta^{N}_0}=\xi^{N}_1-\xi^{N}_0=\xi^{N}_1+1=-\cos(\eta^{N}_1)+1.
\end{align*}
Therefore we obtain
\begin{align*}
\abs{\Delta^{N}_0} \le -\cos\left(\pi \frac{3}{2N+1}\right) + 1 \stackrel{(\ref{eq:estimatetrig}~c)}{\le} \frac{9\pi^2}{2(2N+1)^2}
\end{align*}
and
\begin{align*}
\abs{\Delta^{N}_0} \ge -\cos\left(\pi \frac{1}{N}\right) + 1 \stackrel{(\ref{eq:estimatetrig}~b)}{\ge} \frac{4}{N^2}.
\end{align*}

If $N$ is odd ($\floor{\frac{N}{2}}=\frac{N-1}{2}$) and $N \ge 3$, there is a central LGL interval $\Delta^{N}_{\floor{\frac{N}{2}}}$ of size $\abs{\Delta^{N}_{\floor{\frac{N}{2}}}}=-2\xi^{N}_{\floor{\frac{N}{2}}} = 2 \cos(\eta^{N}_{\floor{\frac{N}{2}}})$, see Figure~\ref{fig:enumeration}(a). We estimate its length by
\begin{align*}
\abs{\Delta^{N}_{\floor{\frac{N}{2}}}} \le 2 \cos\left(\frac{\pi}{2} \frac{N-1}{N}\right) \stackrel{(\ref{eq:estimatetrig}~c)}{\le} 2 \left(1-\left(\frac{N-1}{N}\right)^2\right) = \frac{4N-2}{N^2}
\end{align*}
and
\begin{align*}
\abs{\Delta^{N}_{\floor{\frac{N}{2}}}} \stackrel{(\ref{eq:estimatetrig}~b)}{\ge} 2\cos\left(\frac{\pi}{2} \frac{N}{N+\frac{1}{2}}\right) \ge 2 \left(1-\frac{2N}{2N+1}\right) = \frac{2}{2N+1}.
\end{align*}

If $N$ is even ($\floor{\frac{N}{2}}=\frac{N}{2}$) and $N \ge 4$, the center of the interval is a LGL node, i.e., we have $\eta^{N}_{\frac{N}{2}}=\frac{\pi}{2}$, $\xi^{N}_{\frac{N}{2}}=0$, see Figure~\ref{fig:enumeration}(b). The adjacent interval $\Delta^{N}_{\frac{N}{2}-1}$ has the length $\abs{\Delta^{N}_{\frac{N}{2}-1}}=\cos(\eta^{N}_{\frac{N}{2}-1})$. This can be estimated by
\begin{align*}
\abs{\Delta^{N}_{\frac{N}{2}-1}} \stackrel{(\ref{eq:estimatetrig}~c)}{\le} 1-\frac{4}{\pi^2} \left(\pi \frac{\frac{N}{2}-1}{N}\right)^2 = \frac{4N-4}{N^2}
\end{align*}
and
\begin{align*}
\abs{\Delta^{N}_{\frac{N}{2}-1}} \stackrel{(\ref{eq:estimatetrig}~b)}{\ge} 1-\frac{2}{\pi} \left(\pi \frac{\frac{N}{2}-\frac{1}{2}}{N+\frac{1}{2}}\right) = \frac{3}{2N+1},
\end{align*}
which closes the proof. \qed
\end{proof}

\begin{remark}
The estimates for LGL interval lengths in Property~\ref{lem:LGLeIlengths} show that $\abs{\Delta^{N}_{0}} \simeq \frac{1}{N^2}$ for the boundary interval and
$\abs{\Delta^{N}_{\floor{\frac{N-1}{2}}}} \simeq \frac{1}{N}$ for the intervals at the origin.
\end{remark}


\subsection{Quasi-uniformity of Legendre-Gauss-Lobatto grids}\label{sect:QuasiuniformityLGL}

In this subsection we present the proof of Theorem~\ref{thm:A} (i), i.e., we show that two consecutive LGL intervals differ in length at maximum by a constant factor independent of $N$.
\begin{property}[Quasi-uniformity of LGL grids]\label{lem:QuasiUniformity}
The LGL grids $\cGLGL_N$ satisfy
\begin{align*}
C_{g}^{-1} \le \frac{\abs{\Delta^{N}_{k}}}{\abs{\Delta^{N}_{k-1}}} \le C_{g} \quad \text{for all} \quad 1 \le k \le N-1, \
N \geq 2\;,
\end{align*}
where the constant $C_{g} {=C_g^\text{LGL}}$ is bounded by the right hand side in \eqref{eq:Cg-est}.
\end{property}
\begin{proof}
By symmetry, it suffices to consider all subintervals that have a nonzero intersection with the left half interval $[-1,0)$. For $N=2$ the two LGL intervals $[-1,0]$ and $[0,1]$ are of the same size. For $N\ge 3$ we apply Property~\ref{lem:LGLeIlengths} for the quotient of two consecutive interval lengths.

For $2\le k \le \floor{\frac{N-1}{2}-1}$ we have
\begin{align*}
\begin{aligned}
\frac{\abs{\Delta^{N}_{k}}}{\abs{\Delta^{N}_{k-1}}}
&\le \frac{\sin\left( \frac{\pi}{2} \frac{4kN+3N+k}{N(2N+1)} \right)
\sin\left( \frac{\pi}{2} \frac{3N-k}{N(2N+1)}\right)}
{\sin\left( \frac{\pi}{2} \frac{4(k-1)N+k+3N}{N(2N+1)} \right)
\sin\left(\frac{\pi}{2} \frac{N+k}{N(2N+1)} \right)}\\
&\stackrel{(\ref{eq:estimatetrig}~a)}{\le} \left(\frac{\pi}{2}\right)^2
\frac{4kN+3N+k}{4(k-1)N+3N+k} \cdot \frac{3N-k}{N+k}\\
&\le \left(\frac{\pi}{2}\right)^2 \left( 1 + \frac{4N}{4(k-1)N+3N+k} \right) \frac{3N-k}{N+k}
\le \left(\frac{\pi}{2}\right)^2 \cdot \frac{7}{3} \cdot 3 = \frac{7\pi^2}{4}
\end{aligned}
\end{align*}
and
\begin{align*}
\begin{aligned}
\frac{\abs{\Delta^{N}_{k}}}{\abs{\Delta^{N}_{k-1}}}
&\ge \frac{\sin\left(\frac{\pi}{2} \frac{4kN+k+3N+1}{N(2N+1)} \right)
\sin\left(\frac{\pi}{2} \frac{N+k+1}{N(2N+1)} \right)}
{\sin\left(\frac{\pi}{2} \frac{4(k-1)N+3N+k-1}{N(2N+1)} \right)
\sin\left(\frac{\pi}{2} \frac{3N-k+1}{N(2N+1)} \right)}\\
&\stackrel{(\ref{eq:estimatetrig}~a)}{\ge} \left(\frac{2}{\pi}\right)^2 \frac{4kN+3N+k+1}{4(k-1)N+3N+k-1} \cdot \frac{N+k+1}{3N-k+1}\\
&\ge \left(\frac{2}{\pi}\right)^2 \cdot 1 \cdot \frac{1}{3} = \frac{4}{3\pi^2}.
\end{aligned}
\end{align*}

For the quotient including the boundary interval $\Delta^{N}_0$, we have
\begin{align*}
\frac{\abs{\Delta^{N}_1}}{\abs{\Delta^{N}_0}}
&\le \frac{2 \sin\left(\frac{\pi}{2} \frac{7N+1}{N(2N+1)} \right) \sin\left(\frac{\pi}{2} \frac{3N-1}{N(2N+1)} \right)}{\frac{4}{N^2}}\\
&\stackrel{(\ref{eq:estimatetrig}~a)}{\le} \frac{\pi^2}{8} \frac{(7N+1)(3N-1)}{(2N+1)^2} \le \frac{3 \pi^2}{4}
\end{align*}
and
\begin{align*}
\frac{\abs{\Delta^{N}_1}}{\abs{\Delta^{N}_0}}
&\ge \frac{2 \sin\left(\frac{\pi}{2} \frac{7N+2}{N(2N+1)} \right) \sin\left(\frac{\pi}{2} \frac{N+2}{N(2N+1)} \right)}{\pi^2 \frac{9}{2(2N+1)^2}}\\
&\stackrel{(\ref{eq:estimatetrig}~a)}{\ge} \frac{4}{9\pi^2} \frac{(7N+2)(N+2)}{N^2} \ge \frac{28}{9\pi^2}\;.
\end{align*}

If $N$ is even ($\floor{\frac{N}{2}}=\frac{N}{2}$) and $N\ge4$, we can estimate the quotient from above by
\begin{align*}
\frac{\abs{\Delta^{N}_{\floor{\frac{N}{2}}-1}}}{\abs{\Delta^{N}_{\floor{\frac{N}{2}}-2}}}
&\le \frac{\frac{4N-4}{N^2}}{2 \sin\left(\frac{\pi}{2} \frac{4N^2-9N-2}{2N(2N+1)} \right) \sin\left(\frac{\pi}{2} \frac{3N-2}{2N(2N+1)} \right)}\\
&\stackrel{(\ref{eq:estimatetrig}~a)}{\le} \frac{2(4N-4)(2N+1)^2}{(4N^2-9N-2)(3N-2)}
\stackrel{N \ge 4}{\le} \frac{2 \cdot 12 \cdot 9^2}{26 \cdot 10}
= \frac{486}{65} \;,
\end{align*}
because the very but last term is monotonically decreasing for $N\ge4$.
From below we can estimate the quotient by
\begin{align*}
\frac{\abs{\Delta^{N}_{\floor{\frac{N}{2}}-1}}}{\abs{\Delta^{N}_{\floor{\frac{N}{2}}-2}}}
&\ge \frac{\frac{3}{2N+1}}{2 \sin\left(\frac{\pi}{2} \frac{4N^2-9N-4}{2N(2N+1)} \right) \sin\left(\frac{\pi}{2} \frac{5N+4}{2N(2N+1)} \right)}\\
&\stackrel{(\ref{eq:estimatetrig}~a)}{\ge} \frac{6}{\pi^2} \frac{4N^2(2N+1)}{(4N^2-9N-4)(5N+4)}
\ge \frac{6}{\pi^2} \frac{4N^2 \cdot 2N}{4N^2 \cdot 6N}
= \frac{2}{\pi^2}.
\end{align*}

On the other hand, if $N$ is odd ($\floor{\frac{N}{2}}=\frac{N-1}{2}$) and $N\ge 3$, we can estimate the quotient from above by
\begin{align*}
\frac{\abs{\Delta^{N}_{\floor{\frac{N}{2}}}}}{\abs{\Delta^{N}_{\floor{\frac{N}{2}}-1}}}
&\le \frac{\frac{4N-2}{N^2}}{2 \sin\left(\frac{\pi}{2} \frac{4N^2-5N-1}{2N(2N+1)} \right) \sin\left(\frac{\pi}{2} \frac{3N-1}{2N(2N+1)} \right)}\\
&\stackrel{(\ref{eq:estimatetrig}~a)}{\le} \frac{2(4N-2)(2N+1)^2}{(4N^2-5N-1)(3N-1)}
\stackrel{N \ge 3}{\le} \frac{2 \cdot 10 \cdot 7^2}{20 \cdot 8}
= \frac{49}{8}\;,
\end{align*}
because the very but last term is monotonically decreasing for $N\ge3$.
For the estimate from below we have
\begin{align*}
\frac{\abs{\Delta^{N}_{\floor{\frac{N}{2}}}}}{\abs{\Delta^{N}_{\floor{\frac{N}{2}}-1}}}
&\ge \frac{\frac{2}{2N+1}}{2 \sin\left(\frac{\pi}{2} \frac{4N^2-5N-3}{2N(2N+1)} \right) \sin\left(\frac{\pi}{2} \frac{5N+3}{2N(2N+1)} \right) }\\
&\stackrel{(\ref{eq:estimatetrig}~a)}{\ge} \frac{4}{\pi^2} \frac{4N^2(2N+1)}{(4N^2-5N-3) (5N+3)}
\ge \frac{4}{\pi^2} \frac{4N^2\cdot 2N}{4N^2 \cdot 6N}
= \frac{4}{3\pi^2}.
\end{align*}

Overall, we have the desired estimate for $C_g$ given by \eqref{eq:Cg-est}, as claimed.\qed
\end{proof}

For later purposes it is important to have as tight a bound for $C_g$ as possible and the one derived above will not quite be sufficient. The remainder of this section is devoted to a discussion of possible quantitative improvements and related conjectures.

\begin{definition}
\label{def:Quotients}
We say that the family of LGL grids $\{\cGLGL_N\}_{N \in \N}$ has property \MQ for some $\bar N\in \N$, if the quotients
\begin{align}\label{eq:quot-conj}
q_{k}^{N}:=\frac{\abs{\Delta^{N}_{k}}}{\abs{\Delta^{N}_{k-1}}} = \frac{\xi^{N}_{k+1} -\xi^{N}_{k}}{\xi^{N}_{k} -\xi^{N}_{k-1}}
\end{align}
{satisfy
\begin{enumerate}[label=(\roman{*}), ref=(\roman{*})]
  \item $q_k^N \leq q_k^{N+1}$ for $N < \bar N$, i.e., the quotients increase monotonically in $N$ for each fixed $k\in \{1,\ldots,\fsfloor{N/2}\}$,\label{QuotientsBoundaryIncWithN}
  \item $q_k^N \geq q_{k+1}^{N}$ for $1 \le k \le \floor{\frac{N-3}{2}}$, i.e., the quotients decrease monotonically in $k \in \{1,\ldots,\floor{\frac{N-3}{2}}\}$ for any fixed $N\le \bar N$. \label{QuotientsDecWithk}
\end{enumerate}
}
\end{definition}

In the present context property (i) is only relevant for $k=1,2$. The following observations are immediate and recorded for later use.

\begin{remark}
\label{rem:SmallestConstant}
{
Assume that property \MQ holds for some $\bar N \in \N$. Then, the value of the (smallest) constant $C_g^\text{LGL}$ in \eqref{eq:grid} for the family of LGL grids $\{\cGLGL_N\}_{N \leq \bar N}$ is $q_1^{\bar N}$. Moreover, when omitting the outermost intervals, the constant $\tilde C_g$ satisfying
\begin{equation}
\label{eq:inner}
\tilde C_g^{-1}\leq \abs{\frac{\Delta^N_k}{\Delta^N_{k-1}}} \leq \tilde C_g,\quad 2\leq k\leq N-2,\,\, 2<N\leq \bar N,
\end{equation}
is bounded by $q^{\bar N}_2$.
}

Specifically, one can verify numerically that \MQ holds for $\bar N =2000$, in which case one has
\begin{equation}
\label{eq:q-2000}
q_1^{2000} = 2.352303456118672 \, \pm \, 10^{-15},\quad \quad
q_2^{2000} = 1.571697180994308 \, \pm \, 10^{-15}.
\end{equation}
\end{remark}

{
Numerical evidence supports the following
\begin{conjecture}\label{conj:Quotients}
The LGL grids $\cGLGL_N$ have property \MQ ~for all $\bar N\in \N$.
\end{conjecture}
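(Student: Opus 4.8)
The statement to be proven is Conjecture~\ref{conj:Quotients}, which asserts that the family of LGL grids has property \MQ\ for all $\bar N$ --- i.e., the interval-length quotients $q_k^N$ increase monotonically in $N$ (for fixed $k$) and decrease monotonically in $k$ (for fixed $N$). Since this is flagged as a conjecture supported only by numerical evidence up to $\bar N = 2000$, I do not expect a complete proof; the plan below is the route I would attempt, together with an honest assessment of why it is hard.

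\medskip

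\textbf{Strategy.} The natural tool is the pair of Sturm theorems already recalled: the LGL nodes of order $N+1$ are the zeros of $u(x)=(1-x^2)P^{(3/2)}_N(x)$, a solution of the transformed ODE \eqref{eq:LGLODEtrafo} with potential $\phi^{(3/2)}_N(x) = \bigl((N+\tfrac32)^2-\tfrac14\bigr)/(1-x^2)$. For part~\ref{QuotientsDecWithk} (monotonicity in $k$), one would like to strengthen the Sturm Convexity Theorem~\ref{theo:SturmConvexity}, which already gives $\abs{\Delta^N_k} < \abs{\Delta^N_{k+1}}$, to a statement about the \emph{ratios} of consecutive gaps being monotone. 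The plan is: (a) change variables to the angle $\eta = \arccos(-x)$, in which the ODE becomes $v'' + \Psi(\eta) v = 0$ with $\Psi(\eta) = (N+\tfrac32)^2 - \tfrac14 + \tfrac14\csc^2\eta$ (the "Liouville" form on $[0,\pi]$ with a Bessel-type potential), so that the zeros become nearly equispaced; (b) use the known refined asymptotic expansion of the zeros of $P^{(3/2)}_N$ in powers of $1/(N+\tfrac32)$ (a classical Baratella--Gatteschi / Szegő-type expansion) to obtain $q_k^N$ as $q_k^N = 1 + c(k)/(N+\tfrac32) + O(1/(N+\tfrac32)^2)$ with $c(k)$ strictly decreasing in $k$, and then combine the asymptotic regime with a finite check via the already-proven explicit bounds of Property~\ref{lem:LGLeIlengths}; (c) for part~\ref{QuotientsBoundaryIncWithN}, since only $k=1,2$ are actually needed, exploit the explicit two-sided bounds \eqref{eq:LGLlength0} on $\abs{\Delta^N_0}$ together with the Markoff-type monotonicity \eqref{eq:movetocenter} and the CGL comparison results developed in Section~\ref{sect:CGL}, pushing the analysis of $q_1^N$ and $q_2^N$ to an explicit monotone-in-$N$ estimate.

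\medskip

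\textbf{Key steps in order.} First I would set up the angular/Liouville form of \eqref{eq:LGLODEtrafo} and record that the potential $\Psi(\eta)$ is decreasing on $(0,\pi/2]$, recovering convexity as a sanity check. Second, I would invoke (or re-derive, using the Sturm comparison theorem against the constant-coefficient equation $v'' + (N+\tfrac32)^2 v = 0$ whose zeros are exactly equispaced) the refined asymptotics $\eta_k^N = \tfrac{k\pi}{N+1/2} + \tfrac{1}{N+1/2}\,g\!\bigl(\tfrac{k\pi}{N+1/2}\bigr) + O\!\bigl((N+\tfrac12)^{-3}\bigr)$ for an explicit smooth correction $g$ arising from the $\tfrac14\csc^2\eta$ term. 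Third, differencing this expansion twice in $k$ and forming the ratio gives $q_k^N = 1 + \tfrac{1}{N+1/2}\,\psi\!\bigl(\tfrac{k\pi}{N+1/2}\bigr) + O\!\bigl((N+\tfrac12)^{-2}\bigr)$ with $\psi$ explicitly computable from $g''$; one then checks $\psi$ is strictly decreasing and negative-slope-dominated enough that, for $N$ beyond an explicit threshold $N_0$, both monotonicities hold. Fourth, for $1 \le N \le N_0$ one closes the gap by the rigorous sandwich bounds of Property~\ref{lem:LGLeIlengths} --- choosing $N_0$ large enough that the error terms in the asymptotics are controlled, while $N_0$ stays small enough that the finitely many remaining cases are covered by the interval arithmetic implicit in \eqref{eq:LGLlengthGeneral}--\eqref{eq:LGLlengthInnerEven}. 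Fifth, for part~\ref{QuotientsBoundaryIncWithN} at $k=1,2$, I would instead work directly: write $q_1^N = (\xi_2^N - \xi_1^N)/(\xi_1^N + 1) = (\cos\eta_1^N - \cos\eta_2^N)/(1 - \cos\eta_1^N)$, use the prostapheresis identity \eqref{eq:TrigDiffCos}, and track the $N$-derivative of $\eta_1^N, \eta_2^N$ via the implicit function theorem applied to $P^{(3/2)}_{N-1}(\cos\eta) = 0$, showing $\partial q_1^N/\partial N > 0$; this is where the CGL comparisons of Section~\ref{sect:CGL} and the sharp angle bounds \eqref{eq:LGLa_estimates} would do the heavy lifting.

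\medskip

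\textbf{Main obstacle.} The genuinely hard part is step~three--four: turning the asymptotic monotonicity of $q_k^N$ into a \emph{rigorous, fully uniform} statement across all $N$ and all $k$ simultaneously. The difficulty is threefold --- (i) the correction function $\psi$ must be shown monotone decreasing in its argument on the whole range $(0,\pi/2]$, including the delicate boundary layer near $\eta = 0$ (the interval near the endpoint, governed by Bessel asymptotics rather than the trigonometric regime) and the near-center region; (ii) the remainder terms in the zero-expansion must be bounded \emph{explicitly} and \emph{uniformly in $k$}, not just $O(\cdot)$, which is exactly the kind of estimate that is known to be technically painful for Jacobi polynomials with non-integer parameters; and (iii) the crossover threshold $N_0$ produced by (ii) may be too large for a by-hand finite verification, in which case one is forced back onto Property~\ref{lem:LGLeIlengths}, whose bounds are not tight enough near $k \approx N/2$ to separate $q_k^N$ from $q_{k+1}^N$ when both are close to their limiting value. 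It is precisely this gap between "asymptotically clear" and "rigorous for every finite $N$" that the authors acknowledge by leaving the statement as Conjecture~\ref{conj:Quotients} rather than a theorem; a complete proof would likely require either a monotonicity result for ratios of consecutive spacings that is stronger than Sturm convexity (perhaps via a "second-order" Sturm-type comparison or a Butlewski/Elbert-type argument on the differences of zeros), or a computer-assisted verification with validated interval arithmetic over a finite but large range of $N$ combined with a clean asymptotic tail estimate.
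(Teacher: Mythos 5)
This statement is flagged as a conjecture, so there is no complete proof in the paper to compare against; what the paper provides is a heuristic reduction of part~(ii) to a single open scalar inequality. Concretely, the paper's route is a \emph{stretched Sturm comparison}: with $\xi_0<\xi_1<\xi_2<\xi_3$ consecutive LGL nodes in the left half, it compares the solution $u=(1-x^2)P^{(3/2)}_{N-1}$ of \eqref{eq:LGLODEtrafo} against the affinely rescaled $\tilde u=-u\circ T$, where $T:x\mapsto cx-\delta$ with $c=(\xi_1-\xi_0)/(\xi_2-\xi_1)\in(0,1)$, so that $\tilde u$ satisfies an ODE with potential $c^2\phi^{(3/2)}_{N-1}(cx-\delta)$; Theorem~\ref{theo:SturmComparison} would give the desired ratio inequality provided $c^2\phi^{(3/2)}_{N-1}(cx-\delta)<\phi^{(3/2)}_{N-1}(x)$ on the relevant range, which after elementary algebra reduces to the scalar inequality \eqref{eq:ConditionToProve}, $(c^2+\delta^2-1)/(2\delta c)<\xi_2$. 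That inequality is verified numerically for $N\leq 2000$ but remains open, which is precisely why the claim is a conjecture. Your plan --- pass to the angular Liouville form, expand the angles in powers of $1/(N+\tfrac12)$, difference twice in $k$, and close the finite range of $N$ via Property~\ref{lem:LGLeIlengths} --- is a genuinely different approach. It has the potential advantage of covering both parts (i) and (ii) at once and, if it worked, would simultaneously settle Conjecture~\ref{conj:QuotientsBessel}; but, as you honestly acknowledge, it founders on exactly the difficulty the paper's route sidesteps, namely producing \emph{explicit} and \emph{$k$-uniform} remainder bounds in the zero expansion for non-integer Gegenbauer parameters, together with a crossover threshold small enough for the coarse two-sided bounds of Property~\ref{lem:LGLeIlengths} to close the gap near $k\approx N/2$. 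So the two attempts share the same ultimate status.

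One concrete slip in your sketch worth flagging: the Liouville potential for the LGL problem is wrong. With $x=\cos\theta$, $u=(1-x^2)P^{(3/2)}_{N-1}(x)$ and $v(\theta)=u(\cos\theta)/\sqrt{\sin\theta}$, one finds $v''+\Psi v=0$ with $\Psi(\theta)=(N+\tfrac12)^2-\tfrac{3}{4\sin^2\theta}$, not $(N+\tfrac32)^2-\tfrac14+\tfrac14\csc^2\theta$. The $+\tfrac14\csc^2\theta$ potential (together with $(N+\tfrac12)^2$) belongs to the Legendre polynomial itself, $\sqrt{\sin\theta}\,L_N(\cos\theta)$, i.e.\ to Gauss rather than Gauss--Lobatto nodes; the off-by-one in the degree parameter is harmless, but the sign of the $\csc^2\theta$ term is not. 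It determines whether the angular gaps $\eta^N_{k+1}-\eta^N_k$ contract or expand near the endpoint and therefore the sign of the first-order correction $\psi$ you compute in step three: carrying out that step with the wrong sign would reverse the predicted monotonicity near $\theta=0$, which is precisely the region where part~(ii) is sharpest.
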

}

In order to determine a constant $C_g^\text{LGL}$ that holds for all $N\in \N$, one can exploit the well known fact
that the asymptotic behavior of the LGL nodes can be expressed by means of the zeros of the Bessel function $J_1$.

\begin{theorem}[{\cite[Theorem 8.1.2]{Szegoe1978}}]\label{theo:LGLasymptotics}
The asymptotic behavior of the LGL angles $(\eta^N_k)_{k=0}^N$ is given by the formula
\begin{align*}
\lim_{N\rightarrow\infty} N \eta^N_{k} = j_{1,k},
\end{align*}
where $j_{1,k}$ are the nonnegative zeros of the Bessel function $J_1(x)$.
\end{theorem}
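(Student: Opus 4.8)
The plan is to prove Theorem~\ref{theo:LGLasymptotics} by passing through the transformed ODE \eqref{eq:LGLODEtrafo} and comparing it, via the Sturm comparison theorem, with Bessel's equation in the guise of the ODE whose solutions involve $J_1$. Recall that the LGL nodes of order $N+1$ are the zeros of $u(x)=(1-x^2)P^{(3/2)}_N(x)$, which solves
\begin{equation*}
u''(x)+\phi^{(3/2)}_N(x)\,u(x)=0,\qquad \phi^{(3/2)}_N(x)=\frac{(N+\tfrac32)^2-\tfrac14}{1-x^2}.
\end{equation*}
I would substitute $x=\cos\theta$ (so that the LGL angles $\eta^N_k$ are precisely the zeros of the transformed function, up to the reflection $x\mapsto -x$ built into the definition $\eta^N_k=\arccos(-\xi^N_k)$), turning the equation into a Sturm--Liouville problem on $(0,\pi)$. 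After the standard Liouville substitution $v(\theta)=\sqrt{\sin\theta}\,u(\cos\theta)$ (which removes the first-order term produced by the change of variables), the equation becomes $v''+\psi_N(\theta)v=0$ with $\psi_N(\theta)=(N+\tfrac32)^2+\tfrac14-\tfrac14\csc^2\theta+$ lower order, and near $\theta=0$ this is a perturbation of the Bessel-type equation $w''+\big(\rho^2-\tfrac{1/4-1}{\theta^2}\big)w=0$ whose solution is $\sqrt{\theta}\,J_1(\rho\theta)$, whose positive zeros are $j_{1,k}/\rho$ with $\rho=N+\tfrac32$.

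The key steps, in order, are: (1) write the rescaled variable $t=(N+\tfrac32)\theta$ and show the equation for $v$ in the $t$-variable converges, coefficient-wise and uniformly on compact $t$-intervals $[\epsilon,T]$, to Bessel's equation $\tilde v''+(1-\tfrac{3}{4t^2})\tilde v=0$ as $N\to\infty$; (2) invoke a continuity-of-zeros argument — this is where the Sturm comparison theorem, Theorem~\ref{theo:SturmComparison}, does the work, by trapping the $k$-th zero of $v(\cdot)$ between the $k$-th zeros of solutions of two comparison equations whose potentials sandwich $\psi_N$ and both tend to the Bessel potential; (3) conclude $(N+\tfrac32)\eta^N_k\to j_{1,k}$, and finally replace $N+\tfrac32$ by $N$ since $\eta^N_k=O(1/N)$ for fixed $k$, so the discrepancy $\tfrac32\eta^N_k\to 0$. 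One should also note the harmless index shift between ``order $N$'' and ``degree $N-1$'' in $P^{(3/2)}$, and the reflection $x\mapsto-x$, neither of which affects the limit for fixed $k$ by symmetry \eqref{eq:symmetry}.

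Alternatively, and more in the spirit of ``recalling a classical fact,'' the cleanest route is simply to cite Szeg\H{o}: the statement is essentially \cite[Theorem~8.1.2]{Szegoe1978} applied to the ultraspherical polynomials $P^{(3/2)}_{N-1}$ (equivalently the Jacobi polynomials $P^{(1,1)}_{N-1}$), whose zeros in the angular variable obey $\lim_{N\to\infty}N\theta^{N}_k=j_{\alpha,k}$ with $\alpha=1$ here, combined with the observation that the two extra LGL nodes $\pm1$ correspond to $\eta=0,\pi$ and are irrelevant for fixed interior index $k$. Since the excerpt already grants us \eqref{eq:LGLa_estimates}, one can even give a two-line soft proof: those bounds give $\tfrac{\pi k}{N}\le\eta^N_k\le\tfrac{\pi(2k+1)}{2N+1}$, hence $N\eta^N_k$ is bounded and, after multiplying by $N$, its limit points lie in $[\pi k,\pi(k+\tfrac12)]$; the precise identification of the limit as $j_{1,k}$ then still requires the ODE/Bessel comparison above, so the honest core of the proof is unavoidably step~(2).

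The main obstacle I anticipate is making the ``continuity of zeros'' rigorous and uniform in $k$: the convergence of the rescaled potential to the Bessel potential degenerates both at $t\to0$ (the $\csc^2$ singularity versus $1/t^2$) and at $t\to\infty$ (where $\psi_N$ in the $t$-variable is only $1+O(t^2/N^2)$, valid on bounded $t$ but not uniformly), so one must fix $k$, choose $T$ slightly larger than $j_{1,k}$, and carefully control the error of the Sturm comparison on $[\epsilon,T]$ while separately checking, via the lower bound in \eqref{eq:LGLa_estimates}, that the first $k$ zeros of $v$ all lie below $T/(N+\tfrac32)$ in $\theta$ for $N$ large. Handling the endpoint behavior at $\theta$ near $0$ — equivalently, verifying that no spurious zero escapes to the left — is the delicate point, and is exactly what the boundary hypotheses in parts~\ref{it:SCT1}--\ref{it:SCT2} of Theorem~\ref{theo:SturmComparison} are designed to manage.
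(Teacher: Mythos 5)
The paper does not actually prove this statement: it is recalled as a classical fact and the proof is delegated entirely to Szeg\H{o}'s Theorem~8.1.2 for Jacobi polynomials (here applied to $L'_N \propto P^{(1,1)}_{N-1}$, i.e.\ $\alpha=\beta=1$). Your second, ``cleanest route'' therefore coincides exactly with what the paper does, and your observations about the index shift and the endpoints $\pm 1$ being irrelevant for fixed $k$ are correct.

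Your optional ODE sketch is the standard strategy behind Szeg\H{o}'s own proof and is sound in outline, but two small slips are worth fixing. The Liouville normalization that kills the first-order term produced by $x=\cos\theta$ is $v(\theta)=u(\cos\theta)/\sqrt{\sin\theta}=(\sin\theta)^{3/2}P^{(3/2)}_{N}(\cos\theta)$, not $v=\sqrt{\sin\theta}\,u(\cos\theta)$: for $y''+py'+qy=0$ one sets $v=y\exp(\tfrac12\int p)$, and here $p=-\cot\theta$. With this $v$ the exact potential is
\begin{equation*}
\psi_N(\theta)=\left(N+\tfrac32\right)^2-\tfrac34\csc^2\theta,
\end{equation*}
not $(N+\tfrac32)^2+\tfrac14-\tfrac14\csc^2\theta$; your formula is actually inconsistent with the limit equation you then (correctly) write down, $\tilde v''+\bigl(1-\tfrac{3}{4t^2}\bigr)\tilde v=0$, which needs the singular coefficient $\nu^2-\tfrac14=\tfrac34$ for $\nu=1$. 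After the rescaling $t=(N+\tfrac32)\theta$ one has $\psi_N=1-\tfrac34(N+\tfrac32)^{-2}\csc^2\bigl(t/(N+\tfrac32)\bigr)\to 1-\tfrac3{4t^2}$ uniformly on compact subsets of $(0,\infty)$, and the delicate point you flag at $\theta\to 0$ is handled because $v\sim\theta^{3/2}$ picks out the recessive solution, matching $\sqrt t\,J_1(t)\sim t^{3/2}/2$. With those repairs the sketch is a faithful summary of Szeg\H{o}'s argument; since the paper itself offers no proof, your proposal is at least as complete as what it replaces.
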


The first nonnegative zeros of the Bessel function $J_1(x)$ are given in Table~\ref{tab:BesselJ1zeros}.
\begin{table}
\centering
\scriptsize
\begin{tabular}{|c|c||c|c||c|c|}
\hline
$k$ & $j_{1,k}$&$k$ & $j_{1,k}$&$k$ & $j_{1,k}$\\
\hline
0 & 0                     &  4 & 13.323691936314223032 &  8 & 25.903672087618382625\\
1 & 3.8317059702075123156 &  5 & 16.470630050877632813 &  9 & 29.046828534916855067\\
2 & 7.0155866698156187535 &  6 & 19.615858510468242021 & 10 & 32.189679910974403627\\
3 & 10.173468135062722077 &  7 & 22.760084380592771898 & 11 & 35.332307550083865103\\
\hline
\end{tabular}
\caption{The first nonnegative zeros of the Bessel function $J_1$ of first kind. \cite{Reiser2010}}
\label{tab:BesselJ1zeros}
\end{table}

The following arguments support the validity of Conjecture~\ref{conj:Quotients}. For part (ii) we consider again the proof of the Sturm Convexity Theorem~\ref{theo:SturmConvexity} as a motivation and apply the Sturm Comparison Theorem~\ref{theo:SturmComparison} to the present situation. Let $\xi_{0}$, $\xi_{1}$, and $\xi_{2}$ be three consecutive LGL nodes. By definition these points are zeros of the polynomial $u(x)=(1-x^2) P^{(\frac{3}{2})}_{N-1}(x)$, which is a solution of equation \eqref{eq:LGLODEtrafo}. The affine mapping
\begin{align*}
T:x \mapsto \frac{\xi_{1}-\xi_{0}}{\xi_{2}-\xi_{1}} (x-\xi_{1}) + \xi_{0} =:cx-\delta
\end{align*}
with $c=\frac{\xi_{1}-\xi_{0}}{\xi_{2}-\xi_{1}}$ and $\delta=c \xi_{1}-\xi_{0}$ maps the zeros $\xi_{0}$ and $\xi_{1}$ to the zeros $\xi_{1}$ and $\xi_{2}$, respectively. Note that by Theorem~\ref{theo:monotonicitylength}, we have $0<c<1$. Moreover we can estimate $\delta=c \xi_{1}-\xi_{0} > c(\xi_{2}-\xi_{1}) >0$.

\begin{figure}[hb]
\centering
\includegraphics[width=0.6\linewidth]{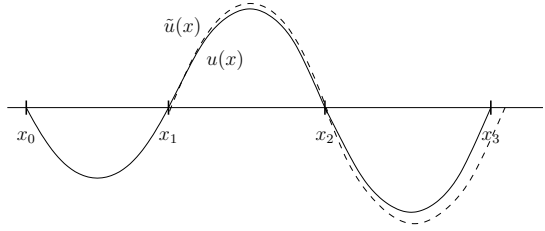}
\caption{The construction for a possible proof of Conjecture~\ref{conj:Quotients}~\ref{QuotientsDecWithk}.}
\label{fig:convexity}
\end{figure}

Now we define $\tilde u:= -u \circ T$, which is a stretched and moved version of $u$ mirrored at the $x$-axis, see Figure~\ref{fig:convexity}. The polynomial $\tilde u$ is a solution of the ODE $\tilde u'' + c^2 \phi^{(\frac{3}{2})}_{N-1}(cx-\delta) =0$. To complete the proof using the Sturm Comparison Theorem~\ref{theo:SturmComparison}, we need to show that
\begin{align*}
& \quad c^2 \phi^{(\frac{3}{2})}_{N-1}(cx-\delta) < \phi^{(\frac{3}{2})}_{N-1}(x),
\end{align*}
which is equivalent to
\begin{align*}
\frac{c^2 c_0}{1-(cx-\delta)^2} < \frac{c_0}{1-x^2} &\quad \text{with} \quad c_0=(N+\frac{1}{2})^2-\frac{1}{4}>0.
\end{align*}
By elementary calculations this can be shown to be equivalent to $c^2 + \delta^2 - 1 < x 2\delta c$.

\begin{remark}
\label{rem:conj}
Since we only need to consider $x \in (\xi_{2},\xi_{3})$, Conjecture~\ref{conj:Quotients}~(ii) would follow from the inequality
\begin{equation}
\frac{c^2 + \delta^2 - 1}{2\delta c} < \xi_{2}\;, \label{eq:ConditionToProve}
\end{equation}
whose proof is still open. However, this inequality has been verified numerically for $N\le \bar N = 2000$.
\end{remark}

A sharper estimate could be obtained in the proof of Theorem~\ref{theo:monotonicitylength} by stretching the mirrored function in order to estimate the stretching constant.

If Conjecture~\ref{conj:Quotients} was correct, the minimal constant in Theorem~\ref{thm:A} (i) could be determined.

\begin{proposition}[M. E. Muldoon, private communication]
\label{prop:Muldoon}
Assume that Conjecture~\ref{conj:Quotients} is true. Then, the value of the (smallest) constant $C_g^\text{LGL}$ in \eqref{eq:grid} for the family of LGL grids $\{\cGLGL_N\}_{N\in\N}$ is $\hat{q}_1 := (j_{1,2}/j_{1,1})^2 -1 = 2.352306 \pm 10^{-6}$.

Moreover, when omitting the outermost intervals the (smallest) constant $\tilde C_g$ satisfying
\begin{equation}
\label{eq:inner2}
\tilde C_g^{-1}\leq \abs{\frac{\Delta^N_k}{\Delta^N_{k-1}}} \leq \tilde C_g,\quad 2\leq k\leq N-2,\,\, N>2,
\end{equation}
is $\hat{q}_2 := (j_{1,3}^2 - j_{1,2}^2)/(j_{1,2}^2- j_{1,1}^2) = 1.571700 \pm 10^{-6}$.
\end{proposition}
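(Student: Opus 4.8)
The plan is to pass to the limit $N\to\infty$ in the quotients $q_k^N$ using Theorem~\ref{theo:LGLasymptotics} and then combine this limiting information with the monotonicity assumptions bundled in Conjecture~\ref{conj:Quotients}. First I would observe that, by definition,
\[
q_k^N=\frac{\xi^N_{k+1}-\xi^N_k}{\xi^N_k-\xi^N_{k-1}}
=\frac{\cos\eta^N_{k+1}-\cos\eta^N_k}{\cos\eta^N_k-\cos\eta^N_{k-1}}\,,
\]
and for fixed $k$, writing $\eta^N_k=j_{1,k}/N+o(1/N)$ from Theorem~\ref{theo:LGLasymptotics}, a Taylor expansion of $\cos$ around $0$ gives $\cos\eta^N_k = 1-\tfrac12 (j_{1,k}/N)^2+o(1/N^2)$. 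Hence the numerator and denominator are each $O(1/N^2)$, their ratio converges, and
\[
\lim_{N\to\infty} q_k^N
=\frac{j_{1,k+1}^2-j_{1,k}^2}{j_{1,k}^2-j_{1,k-1}^2}=:\hat q_k\,,
\]
using $j_{1,0}=0$ in particular for $\hat q_1=(j_{1,2}/j_{1,1})^2-1$. (One should be slightly careful: Theorem~\ref{theo:LGLasymptotics} as stated only gives convergence of $N\eta^N_k$, so the $o(1/N)$ remainder in $\eta^N_k$ needs to be genuine $o(1/N)$, not merely $O(1/N)$; this is standard for the Bessel asymptotics of zeros of classical orthogonal polynomials and can be cited, or one can invoke the sharper asymptotic expansions available for LGL nodes.)

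Next I would use Conjecture~\ref{conj:Quotients}~\ref{QuotientsBoundaryIncWithN}: for each fixed $k$ the sequence $(q_k^N)_N$ is nondecreasing in $N$, hence $q_k^N\le \hat q_k=\sup_N q_k^N$ for all $N$, with the supremum attained only in the limit. In particular $q_1^N\le\hat q_1$ for every $N$, so $\hat q_1$ is an upper bound for all boundary quotients. To conclude that $\hat q_1$ is the \emph{overall} smallest constant $C_g^{\text{LGL}}$ in \eqref{eq:grid}, I invoke Conjecture~\ref{conj:Quotients}~\ref{QuotientsDecWithk}: for fixed $N$ the quotients decrease in $k$ over the admissible range, so among all interior quotients the largest is $q_1^N$; together with the symmetry of LGL grids (which identifies $q_k^N$ with the mirror-image quotient on the right half) and the behaviour of $q_k^N$ for $k$ near $N/2$ already controlled in Property~\ref{lem:QuasiUniformity}, this shows $\sup_{N,k} q_k^N=\sup_N q_1^N=\hat q_1$. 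Since $q_k^N$ and $1/q_{k}^N=q_{?}^N$-type reciprocals are both governed by the same family, the two-sided bound \eqref{eq:grid} holds with $C_g^{\text{LGL}}=\hat q_1$ and no smaller constant works. For the interior statement \eqref{eq:inner2} the identical argument applies with $k$ restricted to $2\le k\le N-2$: the relevant supremum is now $\sup_N q_2^N=\hat q_2=(j_{1,3}^2-j_{1,2}^2)/(j_{1,2}^2-j_{1,1}^2)$.

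The numerical values $\hat q_1=2.352306\pm10^{-6}$ and $\hat q_2=1.571700\pm10^{-6}$ then follow by plugging the tabulated zeros $j_{1,1},j_{1,2},j_{1,3}$ from Table~\ref{tab:BesselJ1zeros} into the closed-form expressions; the error bounds come from propagating the $\sim10^{-18}$ accuracy of the tabulated $j_{1,k}$ through the arithmetic. The main obstacle is not any of these steps individually but rather the justification that the supremum over $k$ (for fixed $N$) and over $N$ (for fixed $k$) can be interchanged and is realized in the double limit $k=1$, $N\to\infty$: this is precisely where both parts of Conjecture~\ref{conj:Quotients} are used in an essential way, and it is the reason the proposition is only conditional. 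Everything else — the Bessel limit, the monotonicity bookkeeping, and the final numerics — is routine once the two monotonicity properties are granted.
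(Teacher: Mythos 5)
Your proposal is correct and follows essentially the same route as the paper: a Taylor expansion of $\cos$ at $0$ combined with Theorem~\ref{theo:LGLasymptotics} yields $q_k^N\to\hat q_k$, and then Conjecture~\ref{conj:Quotients}~(i) (monotone in $N$) and~(ii) (monotone in $k$) together give $\sup_{N,k}q_k^N=\lim_{N\to\infty}q_1^N=\hat q_1$ (resp.\ $\hat q_2$ when omitting the outermost quotient). One small remark: your worry about whether the remainder in $\eta_k^N$ is genuinely $o(1/N)$ is unfounded — the very statement $\lim_{N\to\infty}N\eta_k^N=j_{1,k}$ \emph{is} the assertion $\eta_k^N=j_{1,k}/N+o(1/N)$, which after squaring and Taylor-expanding $\cos$ gives $1-\cos\eta_k^N=\tfrac12 j_{1,k}^2/N^2+o(1/N^2)$, enough to pass to the limit in the quotient; no sharper asymptotics are needed.
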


\begin{proof}
Since $\eta^{N}_{k}\ \to 0$ as $N \to \infty$, we apply the Taylor expansion
$\xi^{N}_{k}=-\cos \eta^{N}_{k} =-1+\frac{(\eta^{N}_{k} )^2}{2} + \mathcal{O}((\eta^{N}_{k} )^4)$.
Using Theorem~\ref{theo:LGLasymptotics}, we have for each $k$
\begin{equation}
\label{eq:qk}
\frac{\abs{\Delta^{N}_{k}}}{\abs{\Delta^{N}_{k-1}}} {=} \frac{\xi^{N}_{k+1} -\xi^{N}_{k}}{\xi^{N}_{k} -\xi^{N}_{k-1}}
\stackrel{N \rightarrow \infty}{\longrightarrow}
\frac{j_{1,k+1}^2 -j_{1,k}^2}{j_{1,k}^2 -j_{1,k-1}^2}.
\end{equation}
If Conjecture~\ref{conj:Quotients} holds we conclude that
\begin{equation}
\label{eq:moni}
\frac{\abs{\Delta^N_{m}}}{\abs{\Delta^N_{m-1}}}\leq \frac{\abs{\Delta^N_{k}}}{\abs{\Delta^N_{k-1}}} \leq \frac{j_{1,k+1}^2 -j_{1,k}^2}{j_{1,k}^2 -j_{1,k-1}^2}, \quad 1\leq k\leq m \leq N/2,\,N\in \N.
\end{equation}
Thus, taking $k=1$ in \eqref{eq:moni}, one obtains $C_g^\text{LGL}\leq
\frac{j_{1,2}^2 -j_{1,1}^2}{j_{1,1}^2 -j_{1,0}^2} {=} \frac{j_{1,2}^2}{j_{1,1}^2}-1 = \hat q_1$, which confirms the first part of the claim. Likewise, taking $k=2$, we infer that
\begin{align*}
\fsabs{\Delta^N_m/\Delta^N_{m-1}} \leq \fsabs{\Delta^N_2/\Delta^N_1} \leq (j_{1,3}^2 - j_{1,2}^2)/(j_{1,2}^2- j_{1,1}^2)= \hat q_2
\end{align*}
for $m \geq 2$, which finishes the proof. \qed
\end{proof}

Similar ideas like the ones preceding Proposition~\ref{prop:Muldoon} lead us to formulate the following conjecture which is again supported by numerical experiments, see Table~\ref{tab:BesselJ1zerosqrquotients}.

\begin{conjecture}\label{conj:QuotientsBessel}
The quotients
\begin{align*}
\hat{q}_{k}:= \frac{j_{1,k+1}^2-j_{1,k}^2}{j_{1,k}^2-j_{1,k-1}^2}
\end{align*}
decrease monotonically when $k \in \N$ increases.
\end{conjecture}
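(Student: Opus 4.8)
The plan is to work directly with the asymptotic quantities $j_{1,k}$ rather than with LGL nodes, so this becomes a pure statement about spacings of zeros of the Bessel function $J_1$. First I would introduce the shorthand $a_k := j_{1,k}^2$, so that $\hat q_k = (a_{k+1}-a_k)/(a_k-a_{k-1})$, and the assertion $\hat q_{k+1} \le \hat q_k$ becomes the log-concavity-type inequality
\begin{equation}
(a_{k+2}-a_{k+1})(a_k-a_{k-1}) \le (a_{k+1}-a_k)^2\;,
\label{eq:besselgap}
\end{equation}
i.e. the sequence of gaps $g_k := a_{k+1}-a_k = j_{1,k+1}^2 - j_{1,k}^2$ satisfies $g_{k+1}/g_k$ nonincreasing. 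So the heart of the matter is understanding the ratios of consecutive squared-gaps of the $j_{1,k}$.

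Next I would bring in the classical McMahon-type asymptotic expansion for the zeros of $J_\nu$ with $\nu=1$: $j_{1,k} = \beta_k - \tfrac{3}{8\beta_k} + O(\beta_k^{-3})$ where $\beta_k = (k+\tfrac14)\pi$, which gives $a_k = j_{1,k}^2 = \beta_k^2 - \tfrac34 + O(\beta_k^{-2})$ and hence $g_k = \beta_{k+1}^2 - \beta_k^2 + O(\beta_k^{-2}) = \pi^2(2k+\tfrac32) + O(k^{-2})$. From this one reads off $\hat q_k = 1 + 2\pi^2/g_{k} + O(k^{-2}) \cdot(\text{lower order})$, which is of the form $1 + \tfrac{1}{k}\cdot(\text{something decreasing}) + \ldots$, so $\hat q_k$ is decreasing \emph{for all sufficiently large $k$}, say $k \ge k_0$, with an explicit $k_0$ obtainable by carrying one more term in the expansion and bounding the remainder using the known error bounds for McMahon's expansion (e.g. the monotonicity/bracketing results of Qu--Wong or the uniform bounds in Watson's treatise). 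This handles the tail. For the finitely many remaining cases $1 \le k < k_0$, I would verify \eqref{eq:besselgap} by direct computation from high-precision values of $j_{1,k}$; the data already tabulated in Table~\ref{tab:BesselJ1zeros} and extended in Table~\ref{tab:BesselJ1zerosqrquotients} furnishes exactly this, and the inequalities there are strict with comfortable margins, so rounding is not an issue.

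An alternative, cleaner route that avoids any case split would be to invoke a genuine convexity statement for $j_{1,k}^2$ in $k$. It is known (Lorch--Muldoon and related work on higher monotonicity of Bessel zeros) that for $\nu \ge \tfrac12$ the sequence $k \mapsto j_{\nu,k}^2$ has convex, and in fact completely monotone, difference structure; more precisely the gaps $g_k = j_{1,k+1}^2 - j_{1,k}^2$ form a sequence whose successive ratios behave monotonically. If a citable result of this precise strength is available it yields \eqref{eq:besselgap} in one line; otherwise one falls back on the asymptotic-plus-finite-check argument above. I expect the main obstacle to be precisely this: pinning down whether the needed ``ratio of consecutive squared-gaps is monotone'' statement is exactly a known theorem about Bessel zeros or whether it must be assembled from the asymptotic expansion together with an explicit finite verification — in other words, the difficulty is bibliographic and in controlling the McMahon remainder uniformly enough to make $k_0$ small, not conceptual. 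Since the conjecture is only stated as a conjecture in the paper, presenting the reduction to \eqref{eq:besselgap}, the asymptotic analysis showing it holds for large $k$, and the numerical corroboration for small $k$ is the appropriate level of rigor here.
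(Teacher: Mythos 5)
The statement you are attacking is presented in the paper only as Conjecture~\ref{conj:QuotientsBessel}; the authors offer no proof, only the numerical evidence in Table~\ref{tab:BesselJ1zerosqrquotients}. So there is no ``paper's own proof'' to compare against, and what you should be judged on is whether your sketch actually closes the gap the authors left open.

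Your reduction is correct: writing $a_k = j_{1,k}^2$ and $g_k = a_{k+1}-a_k$, the claim is exactly that $g_{k+1}/g_k$ is nonincreasing, equivalently $(a_{k+2}-a_{k+1})(a_k-a_{k-1}) \le (a_{k+1}-a_k)^2$. Your McMahon computation is also right: with $\beta_k=(k+\tfrac14)\pi$ one gets $a_k = \beta_k^2 - \tfrac34 + O(k^{-2})$ and $g_k = \pi^2(2k+\tfrac32) + O(k^{-2})$, so $\hat q_k = 1 + \tfrac{2}{2k-1/2} + O(k^{-3})$, which is decreasing once $k$ is large. But this is where the attempt stops being a proof and becomes a plan. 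To make the tail rigorous you need an \emph{explicit} remainder bound in McMahon's expansion that is uniform for $k\ge k_0$, and you need to propagate it through the quotient $g_k/g_{k-1}$; you do not carry this out, and the error term in the quotient is a difference of two nearly equal $O(k^{-2})$ quantities, so the bookkeeping is not automatic. Then the initial segment $k<k_0$ must be checked by certified (interval) arithmetic, not by reading off a twelve-entry table; the table only shows the inequality holds with margin for $k\le 11$, which is fine as evidence but not as proof unless $k_0\le 12$ and the computation is certified.

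The alternative route you mention is the weaker part of the proposal. The Lorch--Muldoon--Szeg\H{o} higher-monotonicity results for Bessel zeros concern the signs of iterated differences of the sequence $j_{\nu,k}$ itself (and of related sequences such as $\Delta j_{\nu,k}$), for $\nu$ in various ranges; I am not aware of a theorem in that circle that says precisely that the gap sequence $g_k = j_{1,k+1}^2 - j_{1,k}^2$ is log-concave, and your own phrasing (``if a citable result of this precise strength is available'') concedes you have not located one. Note also that $g_k$ is the product of a \emph{decreasing} sequence $j_{1,k+1}-j_{1,k}$ and an \emph{increasing} sequence $j_{1,k+1}+j_{1,k}$, so monotonicity or log-concavity of $g_k$ does not drop out of the known monotonicity of either factor; some genuine cancellation has to be controlled. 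In short: the reduction and the asymptotic direction are sound, but the two load-bearing steps --- a rigorous, quantified McMahon remainder yielding a concrete $k_0$, and either a certified finite check or a precise citation replacing it --- are both missing. That is exactly the gap that makes this still a conjecture in the paper, and your proposal, while a reasonable research plan, does not close it.
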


\begin{table}[ht]
\centering
\scriptsize
\begin{tabular}{|c|c||c|c||c|c|}
\hline
$k$ & $\hat{q}_k$ &
$k$ & $\hat{q}_k$ &
$k$ & $\hat{q}_k$\\
\hline
1 &  2.352305866930589 & 5 &  1.210528759973443 & 9 &  1.114285842810397 \\
2 &  1.571700087758225 & 6 &  1.173914021641693 &10 &  1.102564178478225 \\
3 &  1.363668985974650 & 7 &  1.148148599944429 &11 &  1.093023302970354 \\
4 &  1.266674201821978 & 8 &  1.129032489668108 &12 &  1.085106413514218 \\
\hline
\end{tabular}
\caption{The first {$12$} quotients $\hat{q}_k=\frac{j_{1,k+1}^2-j_{1,k}^2}{j_{1,k}^2-j_{1,k-1}^2}$ , where $j_{1,k}$
are the nonnegative zeros of the Bessel function $J_1$ of first kind.}
\label{tab:BesselJ1zerosqrquotients}
\end{table}


\subsection{Dependence of Legendre-Gauss-Lobatto interval lengths on the order}\label{ssec:lengths}

In this subsection we analyze the behavior of LGL interval lengths with increasing order. The situation in the following theorem is depicted in Figure~\ref{fig:displacement}. The result is an essential ingredient of the proof of Theorem~\ref{thm:B} (iv).

\begin{figure}[b]
\centering
\includegraphics[width=0.7\linewidth]{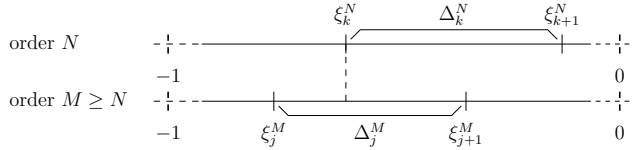}
\caption{Displacement of the LGL intervals with increasing order.}
\label{fig:displacement}
\end{figure}

\begin{theorem}[Displacement of LGL intervals with increasing order]\label{theo:DispLGL2}
Let $N \in \N$ with $N \ge 2$ and $0\le k \le \floor{\frac{N-1}{2}}$. If $\xi^{M}_{j} \le \xi^{N}_{k}$ for $M \in \N$ with $M\ge N$ and $0\le j \le \floor{\frac{M-1}{2}}$, then for the corresponding LGL intervals $\Delta^{M}_{j}=[\xi^{M}_{j},\xi^{M}_{j+1}]$ and $\Delta^{N}_{k}=[\xi^{N}_{k},\xi^{N}_{k+1}]$, we have the inequality $\abs{\Delta^{M}_{j}} \le \abs{\Delta^{N}_{k}}$.
\end{theorem}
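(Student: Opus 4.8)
The plan is to transfer everything to the variable $t=x^2$ near the left endpoint and to use the two comparison principles already at our disposal: the Sturm Comparison Theorem~\ref{theo:SturmComparison} applied to the transformed ODE \eqref{eq:LGLODEtrafo}, whose potential $\phi^{(3/2)}_{N-1}(x) = \bigl((N+\tfrac12)^2-\tfrac14\bigr)/(1-x^2)$ is monotone, and the monotonicity of the potential in $N$: for fixed $x\in(-1,1)$ the function $N\mapsto \phi^{(3/2)}_{N-1}(x)$ is strictly increasing. The latter is the key structural fact — increasing the order $N$ \emph{increases} the potential everywhere, hence compresses the zeros — and it is what makes an inequality between interval lengths of \emph{different} orders plausible. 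So the first step is to record $\phi^{(3/2)}_{M-1}(x) > \phi^{(3/2)}_{N-1}(x)$ for all $x\in(-1,1)$ whenever $M>N$.

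Second, I would set up the comparison so that it is anchored at the common left endpoint $-1$, where all the relevant solutions $u^{(N)}(x)=(1-x^2)P^{(3/2)}_{N-1}(x)$ vanish with positive derivative (after normalization). Both $u^{(M)}$ and $u^{(N)}$ solve equations of the form $u''+\phi\,u=0$ with $u(-1)=0$; since $\phi^{(3/2)}_{M-1} > \phi^{(3/2)}_{N-1}$ on $(-1,1)$, part~\ref{it:SCT3} of the Sturm Comparison Theorem gives that the $k$-th zero of $u^{(M)}$ to the right of $-1$ lies strictly to the left of the $k$-th zero of $u^{(N)}$ — this is of course just the statement that $\xi^M_k<\xi^N_k$ interior to $[-1,1]$, already implicit in \eqref{eq:LGLa_estimates}. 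The real work is that I need a comparison not of the positions of the zeros but of the \emph{gaps} between consecutive zeros, and moreover the two gaps $\Delta^M_j$ and $\Delta^N_k$ being compared are anchored at \emph{different} points $\xi^M_j \le \xi^N_k$, not at the same point.

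Third — and this is where the argument of Theorem~\ref{theo:monotonicitylength} and the Sturm Convexity Theorem~\ref{theo:SturmConvexity} must be combined with the $N$-dependence — I would argue by a "shift-and-compare" device analogous to the affine map $T$ used in the discussion preceding Proposition~\ref{prop:Muldoon}. Concretely: $\phi^{(3/2)}_{N-1}$ is decreasing on $(-1,0]$, so by Sturm convexity the length of an LGL interval of \emph{fixed} order $N$ is an increasing function of the index; this already lets me replace $\Delta^N_k$ by $\Delta^N_{k'}$ for any $k'\ge k$, and more usefully lets me control things by the interval of order $N$ that straddles the point $\xi^M_{j+1}$. Then I translate the solution $u^{(M)}$ rigidly to the right by $\xi^N_{k}-\xi^M_{j}\ge 0$, obtaining a function $v(x):=u^{(M)}(x-(\xi^N_{k}-\xi^M_{j}))$ which vanishes at $\xi^N_k$, has there a positive derivative, and solves $v''+\psi v=0$ with $\psi(x)=\phi^{(3/2)}_{M-1}(x-(\xi^N_{k}-\xi^M_{j}))$. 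Since the shift is to the right and $\phi^{(3/2)}_{M-1}$ is \emph{decreasing} on the relevant subinterval of $(-1,0]$, the shifted potential $\psi$ satisfies $\psi(x)\ge \phi^{(3/2)}_{M-1}(x) > \phi^{(3/2)}_{N-1}(x)$ for $x$ in the range of interest; applying the Sturm Comparison Theorem~\ref{theo:SturmComparison}\,\ref{it:SCT2} at the common zero $\xi^N_k$ then forces the first zero of $v$ to the right of $\xi^N_k$ — which is $\xi^M_{j+1}+(\xi^N_k-\xi^M_j)$ — to lie to the left of $\xi^N_{k+1}$, i.e. $\xi^M_{j+1}-\xi^M_j \le \xi^N_{k+1}-\xi^N_k$, which is exactly $\abs{\Delta^M_j}\le\abs{\Delta^N_k}$.

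The main obstacle I anticipate is the bookkeeping of the domain on which the shifted potential comparison $\psi > \phi^{(3/2)}_{N-1}$ is valid: the shift moves the singularity at $x=1$ of $\phi^{(3/2)}_{M-1}$ leftward, so one must check that throughout $[\xi^N_k,\xi^N_{k+1}]$ we stay to the left of the shifted singularity and in the region where $\phi^{(3/2)}_{M-1}$ is still decreasing (i.e. that $\xi^N_{k+1}-(\xi^N_k-\xi^M_j)$ does not overshoot $0$). Since the hypothesis restricts to $k\le\floor{(N-1)/2}$ and $j\le\floor{(M-1)/2}$ this should hold, but the boundary cases $N=2$, the central interval, and even/odd parity will need the explicit estimates of Property~\ref{lem:LGLeIlengths} as a fallback. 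A cleaner alternative, which I would pursue in parallel, is to avoid the rigid shift entirely and instead use the ultraspherical-parameter monotonicity \eqref{eq:movetocenter} together with a continuity/limiting argument in $N$ — but the Sturm shift argument above seems the most self-contained route given the tools already assembled in the paper.
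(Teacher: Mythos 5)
Your proposal matches the paper's proof essentially line for line: translate $u^{(M)}$ rigidly by $\delta=\xi^N_k-\xi^M_j$ so that it vanishes at $\xi^N_k$, use the chain $\phi^{(3/2)}_{N-1}(x) < \phi^{(3/2)}_{M-1}(x) \le \phi^{(3/2)}_{M-1}(x-\delta)$ together with the monotonicity of the potential on $(-1,0]$, and apply the Sturm Comparison Theorem at the common zero to conclude $\xi^M_{j+1}+\delta \le \xi^N_{k+1}$. The domain-bookkeeping concern you flag — that the intermediate inequality $\phi^{(3/2)}_{M-1}(x)\le\phi^{(3/2)}_{M-1}(x-\delta)$ requires $x$ and $x-\delta$ to stay in $(-1,0]$, which fails on the central interval of odd $N$ where $\xi^N_{k+1}>0$ — is a genuine point that the paper's own proof also leaves implicit; both arguments need a small supplement there (e.g.\ verifying the needed terminal inequality $\phi^{(3/2)}_{M-1}(x-\delta)>\phi^{(3/2)}_{N-1}(x)$ directly, which a short computation with $c_M>c_N$ confirms), so your proposal is no less complete than the paper's.
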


\begin{proof}
By symmetry it is sufficient to consider only the left half of $[-1,1]$. If $N=M$, the assertion follows directly from Theorem~\ref{theo:monotonicitylength}. Otherwise we observe that the function
\begin{align*}
\phi^{(\frac{3}{2})}_{N-1}(x)=\frac{(N+\frac{3}{2})^2-\frac{1}{4}}{1-x^2},
\end{align*}
see \eqref{eq:LGLODEtrafo}, is non-increasing on $(-1,0]$. Furthermore $\phi^{(\frac{3}{2})}_{N-1}(x)<\phi^{(\frac{3}{2})}_{M-1}(x)$ for $N<M$ and $x \in (-1,1)$.

Let $y(x)$ be a solution of the ODE $y''(x) + \phi^{(\frac{3}{2})}_{M-1}(x) y(x)=0$, having the zeros $\xi^{M}_{k}$ for $1 \le k \le \floor{\frac{M-1}{2}}$. Then $y(x-\delta)$ is a solution of the ODE $y''(x) + \phi^{(\frac{3}{2})}_{M-1}(x-\delta) y(x)=0$ with zeros $\xi^{M}_{k}+\delta$ for $1 \le k \le \floor{\frac{M-1}{2}}$.

Now we choose $\delta:= \xi^{N}_{k}-\xi^{M}_{j}>0$ such that $\xi^{N}_{k}$ and $\xi^{M}_{j}+\delta$ coincide. Then, by the properties noted above and because of $\xi^{M}_{j} \le \xi^{N}_{k}$, we have
\begin{equation}
\phi^{(\frac{3}{2})}_{N-1}(x) < \phi^{(\frac{3}{2})}_{M-1}(x) \le \phi^{(\frac{3}{2})}_{M-1}(x-\delta).
\end{equation}
By the Sturm comparison theorem, the first zero $\xi^{M}_{j+1}+\delta$ of $(1-x^2) P^{(\frac{3}{2})}_{M-1}(x+\delta)$ to the right of $\xi^{N}_{j} = \xi^{M}_{j}+\delta$ occurs before the first zero of $(1-x^2) P^{(\frac{3}{2})}_{N-1}(x)$ to the right of $\xi^{N}_{j}$, i.e.,
\begin{equation}
\abs{\Delta^{M}_{j}} = \xi^{M}_{j+1}-\xi^{M}_{j} = (\xi^{M}_{j+1}+\delta) - (\xi^{M}_{j}+\delta) \le \xi^{N}_{k+1}-\xi^{N}_{k} = \abs{\Delta^{N}_{k}},
\end{equation}
which completes the proof. \qed
\end{proof}


\subsection{Chebyshev-Gauss-Lobatto nodes and intervals}\label{sect:CGL}
In order to prove Theorem~\ref{thm:A}~(ii) we make a small digression establishing first corresponding properties for another class of grids associated with ultraspherical polynomials, namely \emph{Chebyshev-Gauss-Lobatto (CGL) grids}.
This is a much easier task since the corresponding nodes can be expressed by explicit formulae.
{Recall that the Chebyshev polynomials of first kind $T_N$ of degree $N \in \N$ are also special instances of ultraspherical polynomials. In fact, for $\lambda=0$ one has $T_N=c_T(N) P^{(0)}_N$ with appropriate nonzero normalization constants $c_T(N)$.}

\begin{definition}[Chebyshev-Gauss-Lobatto nodes, grid and intervals]
\label{def:CGL}
For $N \in \N$ we define the CGL nodes $\zeta^N_k$ by
\begin{equation}
\zeta^N_k = -\cos \theta^{N}_k \quad \textnormal{with} \quad
\theta^{N}_k = \frac{\pi k}{N} \quad \textnormal{for} \quad 0 \le k \le N \;.
\end{equation}
Their collection $\cGCGL_N=\{ \zeta_k^N \, : \, 0 \leq k \leq N\}$ forms the CGL grid of order $N$. We also define the corresponding CGL intervals $\Lambda^N_k:=[\zeta^N_{k},\zeta^N_{k+1}]$ for $0 \le k \le N-1$.
\end{definition}
We recall that the points $\zeta_k^N$ with indices $1 \leq k \leq N-1$ are the local extrema of the Chebyshev polynomial of the first kind $T_{N}$, namely the zeros of the Chebyshev polynomial of the second kind $U_{N-1}$.

We can immediately calculate the lengths of the intervals bounded by two consecutive CGL points using the prostapheresis trigonometric identity \eqref{eq:TrigDiffCos}.
Note that for $x,y \in [0,\frac{\pi}{2}]$ and $x \ge y$ the arguments of the sine function $\frac{x+y}{2}$ and $\frac{x-y}{2}$ are both within $[0,\frac{\pi}{2}]$.
The following properties whose analogs for LGL grids have already been established before are simpler in this case but will be needed below.

\begin{property}[CGL interval lengths]
\label{prop:CGLIlengths}
The length of the $k$-th CGL interval $\Lambda^N_k$ is
\begin{align*}
\begin{aligned}
\abs{\Lambda^N_k} = \zeta^N_{k+1}-\zeta^N_{k} = -\cos(\pi \frac{k+1}{N}) + \cos(\pi \frac{k}{N})
= 2 \sin( \pi\frac{2k+1}{2N}) \sin(\frac{\pi}{2N}).
\end{aligned}
\end{align*}
\end{property}

We derive next two types of facts about CGL nodes and corresponding intervals, namely first \emph{monotonicity} statements for any given polynomial degree and second the evolution of interval lengths for increasing degrees.

\subsubsection{Monotonicity and quasi-uniformity properties of Chebyshev-Gauss-Lobatto intervals}

Let us consider the quotient of the lengths of two consecutive CGL intervals
\begin{align}
Q^{N}_{k}:=\frac{\abs{\Lambda^{N}_{k}}}{\abs{\Lambda^{N}_{k-1}}}
= \frac{\zeta^{N}_{k+1}-\zeta^{N}_{k}}{\zeta^{N}_{k}-\zeta^{N}_{k-1}}
= \frac{-\cos(\pi \frac{k+1}{N}) + \cos(\pi \frac{k}{N})}{-\cos(\pi \frac{k}{N}) + \cos(\pi \frac{k-1}{N})}
= \frac{\sin(\pi\frac{2k+1}{2N})}{\sin(\pi\frac{2k-1}{2N})}
\label{eq:quotientCGL}
\end{align}
for $1 \le k \le N-1$.

\begin{property}[Monotonicity of CGL interval lengths, quasi-uniformity of the CGL grid]
\label{property:CGL-uniform}
The lengths of the CGL intervals increase monotonically in the left half of the interval $[-1,1]$, i.e.,
\begin{align*}
\abs{\Lambda^{N}_{k-1}} \le \abs{\Lambda^{N}_{k}} \qquad \textnormal{for} \qquad 0\le k\le \floor{\frac{N-1}{2}}.
\end{align*}
Furthermore, the CGL nodes form a quasi-uniform decomposition of the interval $[-1,1]$, i.e., we have
\begin{align*}
\frac{1}{C} \le \frac{\abs{\Lambda^{N}_{k}}}{\abs{\Lambda^{N}_{k-1}}} \le C
\end{align*}
with $C=\frac{3}{2}\pi$ for $1 \le k \le N-1$.
\end{property}
\begin{proof}
By symmetry \eqref{eq:symmetry}, it is sufficient to consider the left half of $[-1,1]$, i.e., the CGL intervals that have nonempty intersection with $[-1,0)$. If $N$ is even, $\zeta^{N}_{\frac{N}{2}}=0$ is a zero of the Chebyshev polynomial $U_{n-1}$. Therefore, in this case $\Lambda^{N}_{k} \subset [-1,0]$ if and only if $0\le k\le \frac{N-2}{2}$.
Otherwise $N$ is odd and there is additionally a central interval $\Lambda^{N}_{\frac{N-1}{2}} =[\zeta^{N}_{\frac{N-1}{2}}, \zeta^{N}_{\frac{N+1}{2}}]$ that is symmetric with respect to the origin, i.e., the CGL interval $\Lambda^{N}_{k}$ has nonempty intersection with $[-1,0)$ if and only if $0\le k\le \frac{N-1}{2}$. Combining both cases, we conclude that the CGL interval $\Lambda^{N}_{k}$ has nonempty intersection with $[-1,0)$ if and only if $0\le k\le \floor{\frac{N-1}{2}}$.
In this case, the arguments of the sine functions in the numerator and the denominator of the last term of \eqref{eq:quotientCGL} are in $[0,\frac{\pi}{2}]$ and since the sine function is monotonically increasing on $[0,\frac{\pi}{2}]$, we have $1 \le Q^{N}_{k}$, which is the first part of the assertion.

To estimate $Q^{N}_{k}$ from above, we bound the sine function on $[0,\frac{\pi}{2}]$ using (\ref{eq:estimatetrig}~c), and obtain
\begin{align*}
Q^{N}_{k} \stackrel{\eqref{eq:quotientCGL}}{=} \frac{\sin(\pi\frac{2k+1}{2N})}{\sin(\pi\frac{2k-1}{2N})}
\le \frac{\pi\frac{2k+1}{2N}}{\frac{2}{\pi} \pi\frac{2k-1}{2N}}
= \frac{\pi}{2} \frac{2k+1}{2k-1} \le \frac{3}{2} \pi,
\end{align*}
which is the second part of our claims. \qed
\end{proof}

\subsubsection{Displacement of intervals for Chebyshev nodes for increasing order}

As the degree $N$ increases, the CGL intervals decrease in length and move towards the end points of $[-1,1]$. To quantify both effects, we formulate the following proposition on the monotonicity of the interval lengths. As before, by symmetry, we can restrict ourselves to the left half of the interval $[-1,1]$. The following proposition is not strictly needed for our present purposes. But since CGL grids are also used in numerical analysis their association with dyadic grids may also be of interest so that we pause including the following result for completeness.

\begin{proposition}[Displacement of CGL intervals]\label{lem:DispCGL}
Let $N \in \N$, $m \in \N$ and $0\le k\le \floor{\frac{N-1}{2}}$ be given. If for $j\in \N$ we have $\zeta^{N+m}_j \le \zeta^{N}_k$, then the corresponding CGL intervals satisfy $\abs{\Lambda^{N+m}_{j}} \le \abs{\Lambda^{N}_{k}}$.
\end{proposition}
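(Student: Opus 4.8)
The plan is to exploit the explicit formula for CGL interval lengths from Property~\ref{prop:CGLIlengths}, namely $\abs{\Lambda^N_k}=2\sin\bigl(\pi\tfrac{2k+1}{2N}\bigr)\sin\bigl(\tfrac{\pi}{2N}\bigr)$, together with the prostapheresis representation of the nodes. First I would reduce to the case $m=1$ by induction: if I can show that the claim holds when the order is increased by one, then for general $m$ one chains the inequalities, using that if $\zeta^{N+m}_j\le\zeta^N_k$ then there is an intermediate index at each level $N+1,\dots,N+m$ tracking the node just left of (or equal to) $\zeta^N_k$; actually it is cleaner to argue directly, so as a second option I would instead mimic the Sturm-comparison proof of Theorem~\ref{theo:DispLGL2}. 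Both CGL grids of order $N$ and $N+m$ are governed by the transformed ODE \eqref{eq:JacobiODEtrafo} with $\lambda=1$, whose coefficient $\phi^{(1)}_{N-1}(x)=\frac{(N-\frac12)^2-\frac14}{1-x^2}+\frac{3/4}{(1-x^2)^2}$ is non-increasing on $(-1,0]$ and strictly increasing in $N$. So the identical translation-and-compare argument applies: translate the order-$(N+m)$ solution by $\delta:=\zeta^N_k-\zeta^{N+m}_j\ge 0$, observe $\phi^{(1)}_{N-1}(x)<\phi^{(1)}_{N+m-1}(x)\le\phi^{(1)}_{N+m-1}(x-\delta)$ on the relevant range, and invoke Theorem~\ref{theo:SturmComparison}\ref{it:SCT3} to conclude that the next zero of the translated order-$(N+m)$ function lies left of the next zero of the order-$N$ function.

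Carrying this out in order: (1) note that by symmetry it suffices to work on $[-1,0]$, so all indices stay in the range where the nodes are zeros of $U_{N-1}$ resp. $U_{N+m-1}$; (2) identify the governing ODE \eqref{eq:LGLODEtrafo}-style equation for $\lambda=1$ and record that its potential is non-increasing on $(-1,0]$ and monotone increasing in the degree; (3) set $\delta=\zeta^N_k-\zeta^{N+m}_j\ge0$, check $\zeta^{N+m}_j+\delta=\zeta^N_k$, and verify the chain of inequalities for the potentials at $x$ versus $x-\delta$ exactly as in the proof of Theorem~\ref{theo:DispLGL2}; (4) apply the Sturm Comparison Theorem to get $\zeta^{N+m}_{j+1}+\delta\le\zeta^N_{k+1}$, which rearranges to $\abs{\Lambda^{N+m}_j}=\zeta^{N+m}_{j+1}-\zeta^{N+m}_j\le\zeta^N_{k+1}-\zeta^N_k=\abs{\Lambda^N_k}$.

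Alternatively, and perhaps more transparently given that closed forms are available, one can give a direct trigonometric proof. The hypothesis $\zeta^{N+m}_j\le\zeta^N_k$ means $\cos\theta^{N+m}_j\ge\cos\theta^N_k$, i.e. $\frac{j}{N+m}\le\frac{k}{N}$, hence $\frac{2j+1}{2(N+m)}\le\frac{k}{N}+\frac{1}{2(N+m)}$. One then wants $\sin\bigl(\pi\tfrac{2j+1}{2(N+m)}\bigr)\sin\bigl(\tfrac{\pi}{2(N+m)}\bigr)\le\sin\bigl(\pi\tfrac{2k+1}{2N}\bigr)\sin\bigl(\tfrac{\pi}{2N}\bigr)$. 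The factor $\sin\bigl(\tfrac{\pi}{2(N+m)}\bigr)\le\sin\bigl(\tfrac{\pi}{2N}\bigr)$ is immediate, and the first factor is controlled because the sine is increasing on $[0,\tfrac{\pi}{2}]$ and the argument $\pi\tfrac{2j+1}{2(N+m)}$ is at most $\pi\tfrac{2k+1}{2N}$ provided $\tfrac{2j+1}{2(N+m)}\le\tfrac{2k+1}{2N}$; this last bound follows from $\tfrac{j}{N+m}\le\tfrac{k}{N}$ together with $\tfrac{1}{2(N+m)}\le\tfrac{1}{2N}$, since the "$+1$" in the numerator is divided by the larger denominator on the left.

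I expect the main (minor) obstacle to be the bookkeeping when $N+m$ is even versus odd and when $j$ or $k$ sits at the central interval: one must make sure the relevant arguments really lie in $[0,\tfrac{\pi}{2}]$ so that monotonicity of sine is applicable, and that the node just to the right of $\zeta^N_k$ is still in the left half (or is the symmetric central node). This is handled exactly as in the proof of Property~\ref{property:CGL-uniform}, where the index ranges $0\le k\le\floor{\tfrac{N-1}{2}}$ were pinned down; no new idea is needed there. Either the Sturm route or the direct trigonometric route works; I would present the direct one since the closed formula makes it a two-line estimate, relegating the Sturm argument to a remark pointing out the parallel with Theorem~\ref{theo:DispLGL2}.
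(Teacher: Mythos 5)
Your direct trigonometric argument is exactly the paper's proof: from $\zeta^{N+m}_j\le\zeta^N_k$ one deduces $j/(N+m)\le k/N$, then bounds the two sine factors separately using $\frac{2j+1}{2(N+m)}=\frac{j}{N+m}+\frac{1}{2(N+m)}\le\frac{k}{N}+\frac{1}{2N}=\frac{2k+1}{2N}$ and monotonicity of $\sin$ on $[0,\pi/2]$. The alternative Sturm route you sketch would need more care than in the LGL case (the transformed solution $u(x)=(1-x^2)^{3/4}P^{(1)}_{N-1}(x)$ is not a polynomial and vanishes at $\pm1$ only with fractional order, and your coefficient should read $\phi^{(1)}_{N-1}(x)=\frac{3/4}{(1-x^2)^2}+\frac{N^2-1/4}{1-x^2}$, not $(N-\frac12)^2-\frac14$), but since you ultimately present the closed-form computation this does not affect the correctness of your proof.
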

\begin{proof}
Using the assumption on the location of the CGL nodes and the monotonicity properties of the cosine function on $[0,\frac{\pi}{2}]$, we note that $\zeta^{N+m}_j < \zeta^N_k$ is equivalent to
\begin{align*}
\cos \left(\frac{\pi j}{N+m}\right) < \cos\left(\frac{\pi k}{N}\right),
\end{align*}
which, in turn, is equivalent to $ \frac{j}{N+m} > \frac{k}{N}$.
Since the sine function increases monotonically on $[0,\frac{\pi}{2}]$, we can conclude for the lengths of the corresponding CGL intervals, upon using \eqref{eq:TrigDiffCos} several times,
\begin{align*}
\abs{\Lambda^{N+m}_{j}}
&= \zeta^{N+m}_{j+1}-\zeta^{N+m}_{j}
 = -\cos\left(\pi \frac{j+1}{N+m}\right) + \cos\left(\pi \frac{j}{N+m}\right)\nonumber\\
&\stackrel{\eqref{eq:TrigDiffCos}}{=} 2 \sin\left( \frac{\pi(2j+1)}{2(N+m)}\right) \sin\left(\frac{\pi}{2(N+m)}\right)\nonumber\\
&\le 2 \sin\left( \frac{\pi (2 \frac{N+m}{N} k+1)}{2(N+m)}\right) \sin\left(\frac{\pi}{2(N+m)}\right)\nonumber\\
&= 2 \sin\left( \frac{\pi}{2} ( \frac{2k}{N} + \frac{1}{N+m} )\right) \sin\left(\frac{\pi}{2(N+m)}\right)\nonumber\\
&\le 2 \sin\left( \pi\frac{2k+1}{2N}\right) \sin\left(\frac{\pi}{2N}\right)\nonumber\\
&\stackrel{\eqref{eq:TrigDiffCos}}{=} -\cos\left(\pi \frac{k+1}{N}\right) + \cos\left(\pi \frac{k}{N}\right)
= \zeta^{N}_{k+1}-\zeta^{N}_{k} = \abs{\Lambda^{N}_{k}},
\end{align*}
which is the desired estimate. \qed
\end{proof}


\subsection{Locally uniform equivalence of grids}\label{sect:loc-unif-equiv}

We are now prepared to establish locally uniform equivalence of grids of comparable order, associated with a family of ultraspherical polynomials, namely for the Chebyshev and Legendre cases. Here the Chebyshev case will serve as a major tool for deriving later an analog for the Legendre case.

\begin{theorem}\label{prop:lgl-uniequiv-cheb}
Assume that $M, N \in \N$, with $c\, N \leq M \leq N$, for some fixed constant $c>0$. Then, the CGL grid $\cGCGL_M$ in the interval $[-1,1]$ is locally $(A,B)$-uniformly equivalent to the grid $\cGCGL_M$, with $A$ and $B$ depending on $c$ but not on $M$ and $N$.
\end{theorem}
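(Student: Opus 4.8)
The proof will rest entirely on the explicit length formula of Property~\ref{prop:CGLIlengths} together with the elementary trigonometric estimates \eqref{eq:estimatetrig}~a). (Note that the second grid in the statement of Theorem~\ref{prop:lgl-uniequiv-cheb} should of course read $\cGCGL_N$.) First I would reduce to the left half of $[-1,1]$. Given $\Lambda^M_j\in\cP(\cGCGL_M)$ and $\Lambda^N_k\in\cP(\cGCGL_N)$ with $\Lambda^M_j\cap\Lambda^N_k\neq\emptyset$, pick a common point $\gamma$; reflecting through the origin (an isometry mapping each grid to itself and $\Lambda^N_k$ to $\Lambda^N_{N-1-k}$) we may assume $\gamma\le 0$, the degenerate case $\gamma=0$ being handled by passing to the mirror interval whenever a grid node happens to coincide with $0$. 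Since $x\mapsto-\cos x$ is a strictly increasing bijection from $[0,\pi]$ onto $[-1,1]$, the intervals $\Lambda^M_j$, $\Lambda^N_k$ are the images under this map of the angle intervals $[\tfrac{\pi j}{M},\tfrac{\pi(j+1)}{M}]$ and $[\tfrac{\pi k}{N},\tfrac{\pi(k+1)}{N}]$, and these both contain $\theta_\gamma:=\arccos(-\gamma)\in[0,\tfrac{\pi}{2}]$. Hence $j\le\lfloor M/2\rfloor$ and $k\le\lfloor N/2\rfloor$, so the midpoint angles $\tfrac{\pi(2j+1)}{2M}$ and $\tfrac{\pi(2k+1)}{2N}$ lie in $[0,\tfrac{\pi}{2}]$, and the overlap of the angle intervals is equivalent to the two inequalities $jN\le(k+1)M$ and $kM\le(j+1)N$.

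Next I would use Property~\ref{prop:CGLIlengths} to factor
\[
\frac{\abs{\Lambda^M_j}}{\abs{\Lambda^N_k}}=R_1\,R_2\,,\qquad
R_1:=\frac{\sin\!\big(\tfrac{\pi(2j+1)}{2M}\big)}{\sin\!\big(\tfrac{\pi(2k+1)}{2N}\big)}\,,\quad
R_2:=\frac{\sin\!\big(\tfrac{\pi}{2M}\big)}{\sin\!\big(\tfrac{\pi}{2N}\big)}\,,
\]
and bound the two factors separately. For $R_2$: from $M\le N$ the numerator is $\ge$ the denominator, so $R_2\ge 1$; and \eqref{eq:estimatetrig}~a) together with $cN\le M$ gives $R_2\le\tfrac{\pi/(2M)}{1/N}=\tfrac{\pi N}{2M}\le\tfrac{\pi}{2c}$. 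For $R_1$: since both midpoint angles lie in $[0,\tfrac{\pi}{2}]$, \eqref{eq:estimatetrig}~a) yields $\tfrac{2}{\pi}\rho\le R_1\le\tfrac{\pi}{2}\rho$ with $\rho:=\tfrac{(2j+1)N}{(2k+1)M}$, the ratio of the midpoint angles, so it suffices to bound $\rho$. Here the two overlap inequalities do the job: from $2jN\le(2k+2)M$ one gets $(2j+1)N\le(2k+1)M+M+N\le(2k+1)M+2N$, hence $\rho\le 1+\tfrac{2N}{(2k+1)M}\le 1+\tfrac{2}{c}$; symmetrically $(2k+1)M\le(2j+1)N+2N$ gives $\rho^{-1}\le 1+\tfrac{2}{2j+1}\le 3$. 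Thus $\tfrac{1}{3}\le\rho\le 1+\tfrac{2}{c}$.

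Combining the estimates,
\[
\frac{2}{3\pi}\;\le\;\frac{\abs{\Lambda^M_j}}{\abs{\Lambda^N_k}}\;\le\;\frac{\pi}{2}\Big(1+\frac{2}{c}\Big)\cdot\frac{\pi}{2c}=\frac{\pi^2(c+2)}{4c^2}\,,
\]
which establishes local $(A,B)$-uniform equivalence of $\cGCGL_M$ and $\cGCGL_N$ with $A=\tfrac{2}{3\pi}$ and $B=\tfrac{\pi^2(c+2)}{4c^2}$, depending on $c$ only.

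I do not foresee a genuine obstacle here: once the setup is in place the computation is entirely elementary. The only delicate points are the bookkeeping in the symmetry reduction — in particular the degenerate configuration $\gamma=0$ with a grid node at $0$ — and making sure the two midpoint angles never exceed $\tfrac{\pi}{2}$, which is exactly what makes the one-sided estimates \eqref{eq:estimatetrig}~a) applicable; both are taken care of by restricting attention to intervals meeting the closed left half $[-1,0]$. (Alternatively, one could route the argument through the displacement Proposition~\ref{lem:DispCGL} and the monotonicity in Property~\ref{property:CGL-uniform}, but the direct estimate above is shorter.)
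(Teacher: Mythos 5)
Your proof is correct and follows the same overall strategy as the paper: exploit the explicit formula $\zeta^N_k=-\cos(k\pi/N)$, observe that overlapping CGL intervals correspond to overlapping angle intervals, derive the linear index inequalities $jN\le(k+1)M$, $kM\le(j+1)N$, and transfer these to a bound on the ratio of interval lengths via the product formula of Property~\ref{prop:CGLIlengths}. The difference is in the final step. The paper absorbs the factor $\sin\frac{\pi}{2N}$ into a $\simeq\frac1N$ approximation, reduces the claim to showing that the two midpoint angles are uniformly comparable, and then proves a separate small lemma that uniformly comparable angles in $(0,\pi/2]$ have uniformly comparable sines, which requires a case split between angles near $0$ and angles bounded away from $0$. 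You instead keep the two factors $R_1$ and $R_2$ explicit and apply the one-sided estimates $\frac{2}{\pi}x\le\sin x\le x$ directly to the midpoint angles, having first arranged by symmetry that these angles lie in $[0,\pi/2]$. This sidesteps the paper's auxiliary lemma entirely and produces explicit constants $A=\frac{2}{3\pi}$, $B=\frac{\pi^2(c+2)}{4c^2}$; it is slightly more delicate in the bookkeeping of the symmetry reduction (the degenerate configuration with a grid node at $0$, which you note), but otherwise cleaner. One small caveat in your write-up: after the reflection you still need to dispose of the case $j=M/2$ with $M$ even (midpoint angle $>\pi/2$), which is exactly what the "pass to the mirror interval'' remark does; it is worth phrasing this as a per-interval replacement rather than a global reflection, since after reflecting both intervals the offending index may simply migrate from one grid to the other.
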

\begin{proof}
Recall from Definition~\ref{def:CGL} that the nodes of the CGL grid of order $N$ are defined as $\zeta_k^N=-\cos \theta_k^N=-\cos \frac{k\pi}N$ for $0 \le k \le N$; $\Lambda_k^N=[\zeta_{k}^N, \zeta_{k+1}^N]$ is the $k$-th interval of this grid, with $0 \leq k \leq N-1$, whose length is given by Property~\ref{prop:CGLIlengths}.

Suppose that $\Lambda_k^N \cap \Lambda_\ell^M \not = \emptyset$, i.e., there exists $x$ such that
\begin{align*}
x=-\cos \theta_x^N =-\cos \frac{k_x \pi}N \qquad \text{for some \ } k_x \in [k,k+1] \;,
\end{align*}
and
\begin{align*}
x=-\cos \theta_x^M =-\cos \frac{\ell_x \pi}M \qquad \text{for some \ } \ell_x \in [\ell,\ell+1] \;.
\end{align*}
The invertibility of the cosine function in $[0,\pi]$ yields
\begin{align*}
\frac{k_x \pi}N = \frac{\ell_x \pi}M
\end{align*}
i.e.,
\begin{align*}
\ell_x= \rho k_x \;, \qquad \text{or} \qquad k_x = \rho^{-1}\ell_x \qquad \text{with} \quad \rho=\frac{M}{N} \in [c,1] \;.
\end{align*}
Then, $\ell \leq \ell_x = \rho k_x \leq \rho (k+1)$ and $k \leq k_x = \rho^{-1} \ell_x \leq \rho^{-1} (\ell+1)$, i.e,
\begin{equation}\label{eq:zzz.2}
\ell \leq \rho (k +1) \;, \qquad k \leq \rho^{-1}(\ell +1) \;.
\end{equation}
Now, using Property~\ref{prop:CGLIlengths} and $\sin x \simeq x$ for small $x$, as well as $M \simeq N$, we have
\begin{align*}
\fsabs{\Lambda_k^N} \simeq \frac{1}{N} \sin \frac{(k+1/2)\pi}N \;, \qquad \text{and} \qquad
\fsabs{\Lambda_\ell^M} \simeq \frac{1}{N} \sin \frac{(\ell+1/2)\pi}M \;.
\end{align*}
Therefore it is enough to prove the uniform equivalence of the two sines. To this end, we first observe that, by \eqref{eq:zzz.2}, we have
\begin{align*}
(\ell+1/2)\frac{\pi}{M} \leq \frac{\rho (k+1) + 1/2}\rho \frac{\pi}{N} = (k+1/2)\frac{\pi}{N} + \frac{1}{2} \left(1+\rho^{-1} \right) \frac{\pi}{N} \;,
\end{align*}
whence, noting that $k \geq 0$, we get
\begin{equation}\label{eq:zzz.3}
\frac{(\ell+1/2)\frac{\pi}{M}}{(k+1/2)\frac{\pi}{N}} \leq 1 + \frac{1}{2} \frac{1+\rho^{-1}}{k+1/2} \leq 2+\rho^{-1} \;.
\end{equation}
Exchanging the roles of $k$ and $\ell$, we obtain
\begin{align*}
\frac{(k+1/2)\frac{\pi}{M}}{(\ell+1/2)\frac{\pi}{N}} \leq 2+\rho \;,
\end{align*}
whence
\begin{equation}\label{eq:zzz.4}
\frac1{2+\rho} \leq \frac{(\ell+1/2)\frac{\pi}{M}}{(k+1/2)\frac{\pi}{N}} \;.
\end{equation}
Putting together \eqref{eq:zzz.3} and \eqref{eq:zzz.4}, we see that we have two angles $\gamma$ and $\varphi$ , that we can assume to be in $(0,\pi/2]$, which satisfy $0 < a \leq (\gamma / \varphi) \leq b$ for some constants $a< 1$ and $b>1$. This immediately implies that there exist constants $a^*$ and $b^*$ with the same properties such that
\begin{align*}
0 < a^* \leq \frac {\sin \gamma} {\sin \varphi} \leq b^* \; .
\end{align*}
Indeed, fix any $\lambda \in (0,1)$ and let $C>0$ such that $Cx \leq \sin x \leq x$ for all $0 \leq x \leq \lambda \pi /2$; then, let $\mu < \lambda$ to be determined in a moment, and let $D>0$ be such that $D \leq \sin x \leq 1$ for all $\mu \pi/2 \leq x \leq \pi/2$. Now, if $b\varphi \leq \lambda \pi/2$, then also $\varphi \leq \lambda \pi/2$ as well as $\gamma \leq b \varphi \leq \lambda \pi/2$, whence
\begin{align*}
\frac {\sin \gamma} {\sin \varphi} \leq \frac { \gamma}{C \varphi} \leq \frac b C \;, \qquad \text{and} \qquad
\frac {\sin \gamma}{\sin \varphi} \geq \frac {C \gamma}{ \varphi} \geq C a \;.
\end{align*}
Conversely, if $b\varphi > \lambda \pi/2$, then $\varphi > (\lambda /b)\pi/2 $ and $\gamma \geq a \varphi \geq (a/b)\lambda \pi/2$. Choosing $\mu=(a/b)\lambda$, we have both $\varphi > \mu \pi/2$ and $\gamma > \mu \pi/2$, whence
\begin{align*}
D \leq \frac {\sin \gamma} {\sin \varphi} \leq \frac 1 D \;.
\end{align*}
This concludes the proof of the theorem. \qed
\end{proof}

In order to establish the analogous property for the LGL grids, we need the following auxiliary result.

\begin{lemma}
\label{lem:CGL}
For any $0 \leq k \leq N-1$, let $\Delta_k^N$ and $\Lambda_k^N$ be, respectively, the $k$-th interval of the LGL grid and the CGL grid of the same order $N$. Then
\begin{align*}
\fsabs{\Delta_k^N} \simeq \fsabs{\Lambda_k^N}
\end{align*}
uniformly in $k$ and $N$.
\end{lemma}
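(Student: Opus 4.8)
The plan is to show that $\fsabs{\Delta_k^N}$ and $\fsabs{\Lambda_k^N}$ are both equivalent, uniformly in the admissible indices, to the single elementary reference quantity $r_k^N:=\frac1N\sin\frac{(2k+1)\pi}{2N}$, from which the claim is immediate. By the symmetry of both grids it suffices to treat the left half, i.e.\ $0\le k\le\floor{\frac{N-1}{2}}$, so that the argument $\frac{(2k+1)\pi}{2N}$ always lies in $(0,\frac\pi2]$. Since the estimates of Property~\ref{lem:LGLeIlengths} require $N\ge5$ (resp.\ $N\ge3$, $N\ge4$ for the extreme intervals), I would first dispose of the finitely many pairs $(k,N)$ with $N\le4$ at once: for each of these the ratio $\fsabs{\Delta_k^N}/\fsabs{\Lambda_k^N}$ is a fixed positive number, hence bounded above and below. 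So assume $N\ge5$ from now on. The CGL side is then immediate from Property~\ref{prop:CGLIlengths}: $\fsabs{\Lambda_k^N}=2\sin\frac{\pi}{2N}\sin\frac{(2k+1)\pi}{2N}$, and applying $\frac2\pi x\le\sin x\le x$ on $[0,\frac\pi2]$ (see \eqref{eq:estimatetrig}) to the first factor yields $\fsabs{\Lambda_k^N}\simeq r_k^N$.

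For the LGL side in the \emph{generic range} $1\le k\le\floor{\frac{N-1}{2}-1}$ I would insert the two-sided bound \eqref{eq:LGLlengthGeneral} into the same machinery. Each of its four sine factors has an argument of one of two shapes: a ``large'' one, namely $\frac\pi2\frac{4kN+3N+k}{N(2N+1)}$ or $\frac\pi2\frac{4kN+k+3N+1}{N(2N+1)}$, and a ``small'' one, namely $\frac\pi2\frac{3N-k}{N(2N+1)}$ or $\frac\pi2\frac{N+k+1}{N(2N+1)}$. An elementary estimate of the corresponding numerators shows that on this range every large argument stays in $(0,\frac\pi2]$ and satisfies $\frac{(2k+1)\pi}{2N}\le(\text{large arg})\le\frac{(2k+1)\pi}{N}$, while every small argument satisfies $\frac1N\lesssim(\text{small arg})\lesssim\frac1N$. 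Since all four arguments are $\le\frac\pi2$, applying $\frac2\pi x\le\sin x\le x$ to each factor sandwiches \emph{both} the upper and the lower bound in \eqref{eq:LGLlengthGeneral} between absolute multiples of $\frac{(2k+1)\pi}{2N}\cdot\frac1N\simeq r_k^N$; hence $\fsabs{\Delta_k^N}\simeq r_k^N$ on the generic range. Alternatively, one may invoke the elementary fact established inside the proof of Theorem~\ref{prop:lgl-uniequiv-cheb} (angles in $(0,\frac\pi2]$ with bounded ratio have sines with bounded ratio), applied to each factor against $\sin\frac{(2k+1)\pi}{2N}$, resp.\ $\sin\frac{\pi}{2N}$.

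It remains to treat $k=0$ and the interval at the centre, using the dedicated estimates of Property~\ref{lem:LGLeIlengths}. For $k=0$, \eqref{eq:LGLlength0} gives $\fsabs{\Delta_0^N}\simeq\frac1{N^2}\simeq\frac1N\sin\frac\pi{2N}=r_0^N$. For the central interval one uses \eqref{eq:LGLlengthInnerOdd} when $N$ is odd (then $k=\floor{\frac{N}{2}}$, $\fsabs{\Delta^N_{\floor{\frac{N}{2}}}}\simeq\frac1N$ and $r^N_{\floor{\frac{N}{2}}}=\frac1N\sin\frac\pi2=\frac1N$) and \eqref{eq:LGLlengthInnerEven} when $N$ is even (then $k=\frac{N}{2}-1$, $\fsabs{\Delta^N_{\frac{N}{2}-1}}\simeq\frac1N$ and $r^N_{\frac{N}{2}-1}=\frac1N\cos\frac\pi{2N}\simeq\frac1N$). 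Since the generic range together with $k=0$ and this last index exhausts $0\le k\le\floor{\frac{N-1}{2}}$, combining the three cases gives $\fsabs{\Delta_k^N}\simeq r_k^N\simeq\fsabs{\Lambda_k^N}$ uniformly in $k$ and $N$, as asserted.

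The only genuine work — and where I expect the effort to go — is the bookkeeping in the generic range: one must verify, uniformly over $1\le k\le\floor{\frac{N-1}{2}-1}$ and $N\ge5$, that the ``large'' arguments do not exceed $\frac\pi2$ (this is exactly where truncating the index range just below $k=N/2$ is used) and that the lower-order terms in the numerators do not destroy comparability with $\frac{(2k+1)\pi}{2N}$ for $k=1$ and small $N$. This is routine — the same style of computation already carried out in the proof of Property~\ref{lem:QuasiUniformity} — but it has to be done with some care.
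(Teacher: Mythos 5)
Your proof is correct, but it takes a genuinely different route from the paper. You sandwich both $\fsabs{\Delta_k^N}$ and $\fsabs{\Lambda_k^N}$ against a single explicit reference quantity $r_k^N=\frac1N\sin\frac{(2k+1)\pi}{2N}$, using the \emph{two-sided} bound \eqref{eq:LGLlengthGeneral} for the LGL side, which forces you to treat the central interval(s) as separate cases alongside $k=0$ and to double-check that the various sine arguments remain in $[0,\pi/2]$. The paper instead handles the two directions asymmetrically: for $\fsabs{\Delta_k^N}\lesssim\fsabs{\Lambda_k^N}$ it uses only the angle bracketing \eqref{eq:LGLa_estimates} to get the crisp set inclusion $\Delta_k^N\subset\Lambda_k^N\cup\Lambda_{k+1}^N$, which yields the bound at once via quasi-uniformity of the CGL grid, and then it uses only the \emph{lower} estimate of Property~\ref{lem:LGLeIlengths}, compared term-by-term with the CGL interval length, to obtain the reverse bound. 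The paper's upper-bound argument is notably shorter and avoids the special case for the central index; your variant is more symmetric in structure and makes the common $\simeq\frac{1}{N}\sin\frac{(2k+1)\pi}{2N}$ mechanism explicit, at the cost of the extra bookkeeping you honestly flag. Both invoke the same ``comparable angles $\Rightarrow$ comparable sines'' lemma from the proof of Theorem~\ref{prop:lgl-uniequiv-cheb}, so neither requires new tools.
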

\begin{proof}
For the LGL nodes, by \eqref{eq:LGLa_estimates} we have for $1 \leq k \leq \fsfloor{(N-1)/2}$ that
\begin{align*}
\xi_k^N = -\cos \eta_k^N \;, \qquad \text{with}\quad k \frac \pi N \leq \eta_k^N \leq \frac{2k+1}{2N+1} \pi < (k+1)\frac \pi N\;;
\end{align*}
thus $\zeta_k^N \leq \xi_k^N \leq \zeta_{k+1}^N$, whence $\zeta_k^N \leq \xi_k^N < \xi_{k+1}^N \leq \zeta_{k+2}^N$, i.e.,
\begin{equation}\label{eq:zzz.5}
\Delta_k^N \subset \Lambda_k^N \cup \Lambda_{k+1}^N \;,
\end{equation}
so that $\fsabs{\Delta_k^N} \leq \fsabs{\Lambda_k^N} + \fsabs{\Lambda_{k+1}^N}$. Since we already know that contiguous elements of the same grid have uniformly equivalent lengths, we obtain
\begin{align*}
\fsabs{\Delta_k^N} \lesssim \fsabs{\Lambda_k^N}
\end{align*}
uniformly in $k$ and $N$.

In order to obtain the reverse bound, we use the following lower estimate from Property~\ref{lem:LGLeIlengths}:
\begin{align*}
\fsabs{\Delta_k^N} \geq 2 \sin \left( \frac{\pi}{2} \frac{4kN+k +3N+1}{N(2N+1)} \right) \, \sin \left( \frac{\pi}{2} \frac{N+k+1}{N(2N+1)} \right) \;,
\end{align*}
and we already know from Property~\ref{prop:CGLIlengths} that
\begin{align*}
\fsabs{\Lambda_k^N}=2 \sin \left( \frac{\pi}{2} \frac{2k+1} N \right)\, \sin \left( \frac{\pi}{2} \, \frac{1}{N} \right) \;.
\end{align*}
Now, it is immediate to observe that for the given interval of variation of $k$ we have
\begin{align*}
\frac{1}{2} \leq \frac{N+k+1}{2N+1} \leq 1.
\end{align*}
On the other hand, we write
\begin{align*}
Z:=\frac{4kN+k +3N+1}{2N+1} = \frac{4N+1}{2N+1} \, k + \frac {3N+1} {2N+1}
\end{align*}
and we observe that
\begin{align*}
1 \leq \frac{4N+1}{2N+1} \leq 2 \qquad \text{and} \qquad 1 \leq \frac {3N+1} {2N+1} \leq 2 \;,
\end{align*}
whence
\begin{align*}
\frac{1}{2} (2k+1) \leq Z \leq 2(2k+1) \;.
\end{align*}
Since we have seen in the proof of the previous theorem that uniformly equivalent arguments imply uniformly equivalent sines, we conclude that
\begin{align*}
\fsabs{\Delta_k^N} \gtrsim \fsabs{\Lambda_k^N} \;
\end{align*}
and the proof of the lemma is complete for $1 \leq k \leq \fsfloor{(N-1)/2}$. For $k=0$, we recall that $\zeta^N_0=\xi^N_0$ and $\zeta^N_1 \leq \xi^N_1 < \zeta^N_2$ as seen above. This yields $\Lambda^N_0 \subseteq \Delta^N_0 \subset \Lambda^N_0 \cup \Lambda^N_1$, whence the result for $k=0$. Finally, we observe that intervals are symmetrically placed around the origin. \qed
\end{proof}

The following Corollary finishes the proof of Theorem~\ref{thm:A}~(ii).

\begin{corollary}\label{prop:lgl-uniequiv-leg}
Assume that $c\, N \leq M \leq N$ for some fixed constant $c>0$. Then, the LGL grid $\cGLGL_M$ in the interval $[-1,1]$ is locally $(A,B)$-uniformly equivalent to the grid $\cGLGL_{N}$, with $A$ and $B$ depending on $c$ but not on $M$ and $N$.
\end{corollary}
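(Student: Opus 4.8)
The plan is to reduce the statement to its Chebyshev analogue, Theorem~\ref{prop:lgl-uniequiv-cheb}, using Lemma~\ref{lem:CGL} to pass between LGL and CGL intervals of the \emph{same} order and Property~\ref{property:CGL-uniform} (quasi-uniformity of CGL grids) as the glue.

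First I would isolate the following intermediate observation: for each fixed $N$, every LGL interval $\Delta_k^N$ has length uniformly equivalent, with \emph{absolute} constants, to the length of any CGL interval $\Lambda\in\cP(\cGCGL_N)$ that meets $\Delta_k^N$. Indeed, Lemma~\ref{lem:CGL} gives $\fsabs{\Delta_k^N}\simeq\fsabs{\Lambda_k^N}$, while the inclusion established in that lemma's proof, $\Delta_k^N\subset\Lambda_k^N\cup\Lambda_{k+1}^N$ in the left half of $[-1,1]$ (and symmetrically in the right half), shows that any CGL interval meeting $\Delta_k^N$ differs from $\Lambda_k^N$ by at most one index; combined with the quasi-uniformity of the CGL grid from Property~\ref{property:CGL-uniform} this yields comparability of the two lengths.

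Next, given $\Delta\in\cP(\cGLGL_M)$ and $\Delta'\in\cP(\cGLGL_N)$ with $\Delta\cap\Delta'\neq\emptyset$, I would pick a point $x\in\Delta\cap\Delta'$ and let $\Lambda^*\in\cP(\cGCGL_M)$ and $\Lambda^{**}\in\cP(\cGCGL_N)$ be CGL intervals containing $x$. By the intermediate observation, $\fsabs{\Lambda^*}\simeq\fsabs{\Delta}$ and $\fsabs{\Lambda^{**}}\simeq\fsabs{\Delta'}$ with absolute constants. Since $x\in\Lambda^*\cap\Lambda^{**}$, the two CGL intervals intersect, so Theorem~\ref{prop:lgl-uniequiv-cheb} applies (the hypothesis $c\,N\le M\le N$ is precisely what it requires) and yields $\fsabs{\Lambda^*}\simeq\fsabs{\Lambda^{**}}$ with constants depending only on $c$. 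Chaining the three equivalences gives $A\le\fsabs{\Delta}/\fsabs{\Delta'}\le B$ with $A,B$ depending only on $c$, which is the assertion.

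The argument is essentially bookkeeping; the only point that needs a little care is that the CGL interval containing $x$ may be shifted by one index relative to $\Lambda_k^N$, so one must invoke CGL quasi-uniformity rather than Lemma~\ref{lem:CGL} by itself. I do not expect any genuine obstacle here: all the analytic substance has already been absorbed into Theorem~\ref{prop:lgl-uniequiv-cheb} and Lemma~\ref{lem:CGL}.
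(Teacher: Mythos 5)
Your proposal is correct and follows essentially the same route as the paper: both reduce to the CGL case via the inclusion $\Delta_k^N\subset\Lambda_k^N\cup\Lambda_{k+1}^N$ from Lemma~\ref{lem:CGL}, chain through Theorem~\ref{prop:lgl-uniequiv-cheb}, and use CGL quasi-uniformity (Property~\ref{property:CGL-uniform}) to absorb index shifts. The only difference is cosmetic bookkeeping (you fix a common point $x$; the paper argues via overlap of the two unions), and a minor imprecision -- a CGL interval meeting $\Delta_k^N$ can be off by up to two indices from $\Lambda_k^N$ at an endpoint, not one -- which CGL quasi-uniformity handles either way.
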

\begin{proof}
Suppose that $\Delta_k^N \cap \Delta_\ell^M \not = \emptyset$ for some $k$ and $\ell$; then, recalling \eqref{eq:zzz.5}, the set $\Lambda_k^N \cup \Lambda_{k+1}^N $ has a non-empty intersection with the set $\Lambda_\ell^M \cup \Lambda_{\ell+1}^M$, which implies that $\Lambda_m^N \cap \Lambda_n^M \not = \emptyset$ for some $m \in \{k,k+1\}$ and $n \in \{\ell, \ell+1\}$. Using Theorem~\ref{prop:lgl-uniequiv-cheb} and Lemma~\ref{lem:CGL}, as well as the fact that contiguous intervals of any CGL and LGL grid have uniformly comparable lengths (see Property~\ref{property:CGL-uniform}, Theorem~\ref{thm:A} (i)), we obtain
\begin{align*}
\fsabs{\Delta_k^N} \simeq \fsabs{\Lambda_k^N} \simeq \fsabs{\Lambda_m^N} \simeq \fsabs{\Lambda_n^M} \simeq \fsabs{\Lambda_\ell^M} \simeq \fsabs{\Delta_\ell^M} \;,
\end{align*}
which is the claim. \qed
\end{proof}


\section{Dyadic Grid Generation - Proof of Theorem~\ref{thm:B}}\label{sect:dyadic}

The main objective of this section is to construct for the family $\{\cGLGL_N\}_{N\in \N}$, of LGL grids an associated family $\{\cD_N\}_{N\in \N}$, of dyadic grids that are locally $(A,B)$-uniformly equivalent with constants $A,B$, independent of $N$, which in addition offer the additional advantage of being nested. This construction is the essential basis for the proof of Theorem~\ref{thm:B}. Therefore, we formulate the construction for a general interval $[a,b]$. Although we shall be concerned in this section with more general classes of ordered grids $\cG=\{x_i : 0 \le i \le N\} \subset [a,b]$, $x_0=a,x_N=b$, we retain some notation used earlier only for LGL grids such as the notation $\Delta$ for corresponding intervals. We always assume that $\cG$ is symmetric around the midpoint of $[a,b]$ and monotonic in the spirit of Theorem~\ref{theo:monotonicitylength}.
Recall that we denote by {$\cP=\cP(\cG)$ the partition of subintervals of $[a,b]$ induced by $\cG$}; conversely, $\cG=\cG(\cP)$ denotes the grid defined by a partition $\cP$ of $[a,b]$ comprised of the endpoints of its intervals.

The following notions will serve as important tools for the envisaged construction. We exploit the symmetry of $\cG$ and consider for any subinterval $I \subset [a,(a+b)/2]$ the largest and shortest overlapping subcell in $\cP (\cG)$
\begin{equation}
\begin{aligned}
\label{eq:Ou}
\oDelta(I,\cG) &:= \argmax\{\fsabs{\Delta}: \Delta \in \cP (\cG),\, I \cap \Delta \neq \emptyset\} \quad \text{and}\\
\uDelta(I,\cG) &:= \argmin\{\fsabs{\Delta}: \Delta \in \cP (\cG),\, I \cap \Delta \neq \emptyset\}.
\end{aligned}
\end{equation}
Note that due to the monotonicity property of $\cG$ and the restriction of $I$ to one half of the base interval, $\oDelta, \uDelta$ are always uniquely defined.

For any interval $D$ in some dyadic partition $\cP$ {with $D \neq [a,b]$} we shall denote by $\hat D = \hat D(D)$ its parent interval which has $D$ as a subinterval created by splitting $\hat D$ at its midpoint.
The following inequalities will be useful in the sequel:
\begin{equation}\label{eq:ineqDhatD}
\uDelta(\hat{D},\cG) \leq \uDelta({D},\cG) \leq \oDelta({D},\cG) \leq \oDelta(\hat{D},\cG) \;.
\end{equation}
It will be convenient to associate with a dyadic partition $\cP$ of $[a,b]$ the corresponding \emph{rooted binary tree} $T=T(\cP)$ whose nodes are those subintervals generated during the refinement process yielding $\cP$. Thus, $T$ has the interval $[a,b]$ as its \emph{root} and the parent-child relation is given by the inclusion of intervals. Those nodes in the tree that have no children are called \emph{leaves}.
Note that the binary tree $T$ associated with a dyadic grid is a \emph{full} binary tree, i.e., every node has either no or two child nodes in the tree. Furthermore, note that the binary tree $T$ is full if and only if the set of leaves of $T$ is a partition of $[a,b]$.


\subsection[The algorithm Dyadic]{The algorithm \algo{Dyadic}}

Given a real number $\alpha >0$, a grid $\cG$ on the interval $[a,b]$ and an initial dyadic grid $\cD_0$, the following Algorithm~\ref{algo:Dyadic} creates a certain refined dyadic grid $\cD = \algo{DyadicGrid}(\cG, \cD_0, \alpha)$ through suitable successive refinements of partitions.

\begin{algorithm}[ht]
\begin{algorithmic}[1]
\State{$\cP \gets \cP_0:=\cP(\cD_0)$}
\Comment{initialization}
\While{there exists $D\in \cP$ such that $\fsabs{D} > \alpha \fsabs{{\uDelta}(D,\cG)}$ \label{inalgo:split}}
\State{split $D$ by halving into its two children $D', D''$ with $D=D'\cup D''$}
\State{$\cP \gets (\cP\setminus \{D\} )\cup \{D',D''\}$}
\Comment{replace $D$ by $D'$ and $D''$}
\EndWhile
\State{\Return \cD:=\cG(\cP)}
\end{algorithmic}
\caption{Algorithm {$\cD=\algo{Dyadic}(\cG, \cD_0, \alpha)$ for the generation
of dyadic grids.}}
\label{algo:Dyadic}
\end{algorithm}
\noindent
Since the parameter $\alpha$ is usually fixed and clear from the context it will be convenient for further reference, to set
\begin{align*}
\cD = \cD(\cG,\cD_0) =:\algo{DyadicGrid}(\cG, \cD_0, \alpha) \quad \mbox{and}\quad
\cP = \cP(\cG,\cD_0) =: \cP(\cD(\cG,\cD_0)) .
\end{align*}

\smallskip
Of course, a natural ``extreme'' choice for the initial dyadic grid is $\cD_0= \{a,b\}$. Note that clearly $\cP$ is also a dyadic partition of $[a,b]$.

In what follows we shall frequently make use of the following relations which are immediate consequences of the definition of Algorithm~\ref{algo:Dyadic}.

\begin{remark}
\label{rem:relations}
The resulting partition $\cP = {\cP(\cG,\cD_0)}$ has the following properties:
\begin{itemize}
\item[(i)]
For any $\Delta \in {\cP(\cG)} $, $D\in \cP$, one has
\begin{equation}
\label{eq:Ismall}
\Delta \cap D \neq \emptyset ~~\implies ~~
\abs{D} \le \alpha \abs{\Delta}.
\end{equation}
\item[(ii)]
Assume that $\cP_0=\cP(\cD_0)$ satisfies
\begin{equation}
\label{eq:beta}
\fsabs{D} > \beta \fsabs{{\uDelta}(D,\cG)} \quad \text{for all} \ \ D\in \cP_0,
\end{equation}
then, for any $\Delta \in {\cP(\cG)}$, $D\in T(\cP)\setminus \cP$, one has
\begin{equation}
\label{eq:split2}
\fsabs{D} > \min\{\alpha,\beta\} \fsabs{{\uDelta}(D,\cG)}.
\end{equation}
Hence, for any $D \in {\cP(\cG,\{a,b\}) \setminus \{[a,b]\}}$, the parent interval $\hat D$ of length $\fsabs{\hat D}=2 \fsabs{D}$, whose halving produced $D$, satisfies
\begin{equation}
\label{eq:I'big}
\fsabs{\hat D} > \alpha \fsabs{{\uDelta}(\hat D,\cG)} \;.
\end{equation}
\item[(iii)]
Whenever for each $D_0$ from the initial partition $\cP_0 {\setminus \{[a,b]\}}$ {the parent $\hat D$ of $D_0$} satisfies condition \eqref{eq:I'big}, then this is inherited by $\cP= {\cP(\cG,\cD_0)}$, i.e., {for each $D \in \cP\setminus \{[a,b]\}$ the parent $\hat D$ of $D$} also satisfies \eqref{eq:I'big}.
\end{itemize}
\end{remark}

It is easy to see that Algorithm~\ref{algo:Dyadic} terminates after finitely many steps. In fact, the maximal dyadic level of $\cD$ is
\begin{align}\label{eq:finiteMaxLevel}
{ J =\ceil{\log_2\Big( \frac{b-a}{\alpha h}\Big)}, }
\end{align}
where $h := \min\{\abs{\Delta} : \Delta \in {\cP(\cG)} \}$ is the finest resolution in the original grid $\cG$.
Indeed, for any dyadic cell $D$ of maximal level $J$ we have
\begin{align*}
\abs{D} = 2^{-J} (b-a) \le 2^{\log_2(\frac{\alpha h}{b-a})} (b-a) \alpha h \le \alpha \abs{ {\uDelta}(D,\cG)} \;,
\end{align*}
hence, such a cell cannot be halved in the algorithm.

We shall see next {that} $\cD {=\cD(\cG,\cD_0)}=\algo{DyadicGrid}(\cG, \cD_0, \alpha)$ and the original grid $\cG$ are {still locally of comparable size} whenever the initial dyadic grid $\cD_0$ satisfies a condition like \eqref{eq:beta}, but with a potentially smaller constant $\beta$, which will be important later.

\begin{proposition}\label{lem:dyadic_ver2}
Assume that \eqref{eq:grid} holds for a given grid $\cG$ and assume that the initial dyadic partition $\cP_0=\cP(\cD_0)$ satisfies the following condition: there exists some $\beta>0$ such that for all $D\in \cP_0\setminus\{[a,b]\}$, the parent $\hat D= \hat D(D)$ satisfies
\begin{equation}
\label{eq:I'bigg_ver2}
\fsabs{\hat D} > \beta \fsabs{\uDelta(\hat D,\cG)} \;.
\end{equation}
Then, the grid $\cD=\algo{DyadicGrid}(\cG, \cD_0, \alpha)$ is locally $(A,B)$-uniformly equivalent to $\cG$ with $A= \alpha^{-1}$, $B= 2C_g/\min\{\alpha, \beta,1\}$, i.e., one has
\begin{multline}
\label{eq:dyeq_ver2}
\qquad\forall D \in \cP=\cP(\cD), \ \ \forall \Delta \in \cP(\cG)\:,\\
\Delta \cap D \neq \emptyset
~\implies~
\alpha^{-1} \le \frac {\abs{\Delta}}{\abs{D}} \le \frac{2C_g}{\min\{\alpha, \beta,1\}}\;.\qquad
\end{multline}
\end{proposition}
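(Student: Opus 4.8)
The statement to be proved is Proposition~\ref{lem:dyadic_ver2}, asserting local $(A,B)$-uniform equivalence of $\cD=\algo{DyadicGrid}(\cG,\cD_0,\alpha)$ and $\cG$ with explicit constants. The plan is to prove the two inequalities in \eqref{eq:dyeq_ver2} separately, using Remark~\ref{rem:relations} for the bookkeeping. Throughout, fix $D\in\cP$ and $\Delta\in\cP(\cG)$ with $\Delta\cap D\neq\emptyset$.

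The lower bound $\alpha^{-1}\le \abs{\Delta}/\abs{D}$, equivalently $\abs{D}\le\alpha\abs{\Delta}$, is the easy half: it is exactly the termination condition enforced by the while-loop, recorded as \eqref{eq:Ismall} in Remark~\ref{rem:relations}(i). So no work is needed there beyond citing it.

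The upper bound $\abs{\Delta}/\abs{D}\le 2C_g/\min\{\alpha,\beta,1\}$ is the substantive part. The idea is to bound $\abs{\Delta}$ by a constant times $\abs{\uDelta(D,\cG)}$, and then relate $\abs{\uDelta(D,\cG)}$ to $\abs{D}$ from below. For the first step, observe that since $\Delta$ intersects $D$ and $D$ is one half of $[a,b]$ or can be reduced to that half by symmetry, $\Delta$ and $\uDelta(D,\cG)$ are either equal or neighbors in $\cP(\cG)$ (here one must use monotonicity of $\cG$ together with $D$ lying in a half-interval, exactly as in the remark preceding \eqref{eq:Ou}); hence by the quasi-uniformity \eqref{eq:grid} of $\cG$ we get $\abs{\Delta}\le C_g\,\abs{\uDelta(D,\cG)}$. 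For the second step, I distinguish two cases. If $D$ is a leaf that was already present in the initial grid $\cP_0$ (so $D$ was never split), then either $D=[a,b]$, a degenerate case handled directly, or $D$ has a parent $\hat D$ in $\cP_0$ and hypothesis \eqref{eq:I'bigg_ver2} gives $\abs{\hat D}>\beta\abs{\uDelta(\hat D,\cG)}\ge\beta\abs{\uDelta(D,\cG)}$ by \eqref{eq:ineqDhatD}; since $\abs{D}=\tfrac12\abs{\hat D}$ this yields $\abs{\uDelta(D,\cG)}<\tfrac{2}{\beta}\abs{D}$. If instead $D$ was produced by a split during the algorithm, its parent $\hat D$ was split because $\abs{\hat D}>\alpha\abs{\uDelta(\hat D,\cG)}\ge\alpha\abs{\uDelta(D,\cG)}$, and again $\abs{D}=\tfrac12\abs{\hat D}$ gives $\abs{\uDelta(D,\cG)}<\tfrac{2}{\alpha}\abs{D}$. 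In either case $\abs{\uDelta(D,\cG)}\le \tfrac{2}{\min\{\alpha,\beta\}}\abs{D}$, and combining with the first step gives $\abs{\Delta}\le \tfrac{2C_g}{\min\{\alpha,\beta\}}\abs{D}$; including the factor $1$ in the minimum covers the edge cases and matches the stated constant.

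The step I expect to be the main obstacle is the case analysis for bounding $\abs{\uDelta(D,\cG)}$ from below by $\abs{D}$ — specifically, making sure every dyadic cell $D\in\cP$ is covered: cells surviving from $\cD_0$ untouched, cells created by refinement, and the possible exceptional cell $[a,b]$ itself (which has no parent). This is precisely the situation Remark~\ref{rem:relations}(ii)--(iii) is designed to streamline, via \eqref{eq:split2} and \eqref{eq:I'big}: one can argue uniformly that \emph{every} $D\in\cP\setminus\{[a,b]\}$ has a parent satisfying $\abs{\hat D}>\min\{\alpha,\beta\}\,\abs{\uDelta(\hat D,\cG)}$, which collapses the case distinction. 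A minor additional subtlety is the reduction to a half-interval via symmetry and the neighbor argument in the first step, but that is routine given the standing symmetry and monotonicity assumptions on $\cG$.
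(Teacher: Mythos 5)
Your lower bound and the second half of your upper-bound argument (bounding $\fsabs{\uDelta(D,\cG)}$ by a constant multiple of $\fsabs{D}$ via the parent condition \eqref{eq:split2}) are correct and match the paper's use of Remark~\ref{rem:relations}. The gap is in the first step of your upper bound, not where you expected it. You assert that because $D$ lies in one half of $[a,b]$ and $\cG$ is monotonic, any $\Delta\in\cP(\cG)$ with $\Delta\cap D\neq\emptyset$ is either equal to or a neighbor of $\uDelta(D,\cG)$, hence $\fsabs{\Delta}\le C_g\,\fsabs{\uDelta(D,\cG)}$. That is false in general: the termination criterion only guarantees $\fsabs{D}\le\alpha\,\fsabs{\uDelta(D,\cG)}$, so for $\alpha\ge 2$ a leaf $D$ can overlap three or more consecutive intervals of $\cG$, in which case $\oDelta(D,\cG)$ and $\uDelta(D,\cG)$ may be several steps apart and quasi-uniformity only yields $\fsabs{\Delta}\le C_g^{k}\,\fsabs{\uDelta(D,\cG)}$ with $k>1$. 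The remark preceding \eqref{eq:Ou} that you invoke only guarantees that $\oDelta$ and $\uDelta$ are \emph{uniquely defined}; it says nothing about $D$ meeting at most two $\cG$-intervals.

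The paper resolves this by arguing through the parent $\hat D$ of $D$ and splitting into two cases according to how many $\cG$-intervals $\hat D$ meets. If $\hat D$ meets at most two, then those are $\uDelta(\hat D,\cG)$ and $\oDelta(\hat D,\cG)$, which are adjacent, and one can chain $\fsabs{\oDelta(D,\cG)}\le\fsabs{\oDelta(\hat D,\cG)}\le C_g\fsabs{\uDelta(\hat D,\cG)}<C_g\,\fsabs{\hat D}/\min(\alpha,\beta)=2C_g\fsabs{D}/\min(\alpha,\beta)$. If $\hat D$ meets three or more, then $\oDelta(\hat D,\cG)$ has a neighbor $\Delta'$ fully contained in $\hat D$, so $2\fsabs{D}=\fsabs{\hat D}\ge\fsabs{\Delta'}\ge C_g^{-1}\fsabs{\oDelta(\hat D,\cG)}\ge C_g^{-1}\fsabs{\oDelta(D,\cG)}$. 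The second case is precisely what your proposal is missing, and it is also the source of the extra $1$ in $\min\{\alpha,\beta,1\}$: in that case the bound is $2C_g$, independent of $\alpha$ and $\beta$, which is why the stated constant cannot be improved to $2C_g/\min\{\alpha,\beta\}$ as your (flawed) chain would suggest.
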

\begin{proof}
The lower bound follows directly from \eqref{eq:Ismall} in Remark~\ref{rem:relations} (i).
We shall show next that there exists an $\eta >0$ such that for any $D\in\cP$
\begin{equation}
\label{eq:toshow_ver2}
\fsabs{D} \geq \eta \fsabs{\Delta} \quad \mbox{holds for all } \Delta \in \cP(\cG) \mbox{ such that } \Delta \cap D\neq \emptyset \;.
\end{equation}
To see this, consider any $D\in\cP\setminus\{[a,b]\}$ and recall from \eqref{eq:split2} in Remark~\ref{rem:relations} (ii) that its parent $\hat D$ satisfies the inequality
\begin{equation}
\label{eq:minab}
\fsabs{\hat D} > \min(\alpha,\beta) \fsabs{\uDelta(\hat D,\cG)} \;.
\end{equation}
Suppose first that $\hat D$ intersects at most two intervals from $\cP(\cG)$; these two intervals therefore have to be $\uDelta(\hat D,\cG), \oDelta(\hat D,\cG)$.
Using \eqref{eq:minab}, \eqref{eq:grid} and \eqref{eq:ineqDhatD}, we obtain
\begin{align*}
2\fsabs{D} =\fsabs{\hat D}
   & > \min(\alpha,\beta) \fsabs{\uDelta(\hat D,\cG)}
     \geq \min(\alpha,\beta) C_g^{-1} \fsabs{\oDelta(\hat D,\cG)} \\
   & \geq \min(\alpha,\beta) C_g^{-1} \fsabs{\oDelta(D, \cG)}
     \geq \min(\alpha,\beta) C_g^{-1} \fsabs{\uDelta(D, \cG)} \;,
\end{align*}
i.e., \eqref{eq:toshow_ver2} holds with $\eta=\min(\alpha,\beta)/(2C_g)$.
Suppose next that one has
$\#\{\Delta \in\cP(\cG): \Delta \cap \hat D \neq \emptyset\} \geq 3 $. Note that, if $\oDelta(\hat D,\cG)$ is not contained in $\hat D$, it must have a neighbor $\Delta'\in \cP(\cG)$ fully contained in $\hat D$. Therefore, by \eqref{eq:grid} and \eqref{eq:ineqDhatD},
\begin{align*}
2\fsabs{D} &= \fsabs{\hat D}
\geq \sum_{\Delta \subset \hat D} \fsabs{\Delta}
\geq \fsabs{\Delta'}
\geq C_g^{-1} \fsabs{\oDelta(\hat D,\cG)}\\
&\geq C_g^{-1} \fsabs{\oDelta(D,\cG)}
\geq C_g^{-1} \fsabs{\uDelta(D,\cG)}\;,
\end{align*}
which means that in this case \eqref{eq:toshow_ver2} is valid with $\eta =1/(2C_g)$. This concludes the proof of \eqref{eq:dyeq_ver2}. \qed
\end{proof}

The value of the parameter $\alpha$ influences the deviation of the cardinality of $\cD=\algo{DyadicGrid}(\cG, \cD_0, \alpha)$ from that of $\cG$: the larger is $\alpha$, the smaller is the number of refinements of the initial grid $\cD_0$ induced by $\cG$. When $\cD_0=\{a,b\}$, a choice of $\alpha$ around $1$ produces comparable cardinalities for $\cD=\algo{DyadicGrid}(\cG, \cD_0, \alpha)$ and $\cG$. This is clearly documented in Figure~\ref{fig:GridsizesDyadic} for the LGL grids $\cGLGL_N$ of increasing order $N$.

\begin{remark}
\label{rem:DyadicAffineInvariance}
Since the algorithm $\algo{Dyadic}(\cG, \cD_0, \alpha)$ only depends on relative sizes of the overlapping subintervals in $\cP(\cG)$ and $\cP$, it is invariant under affine transformations.
\end{remark}

\begin{figure}
\centering
\includegraphics[width=7.6cm]{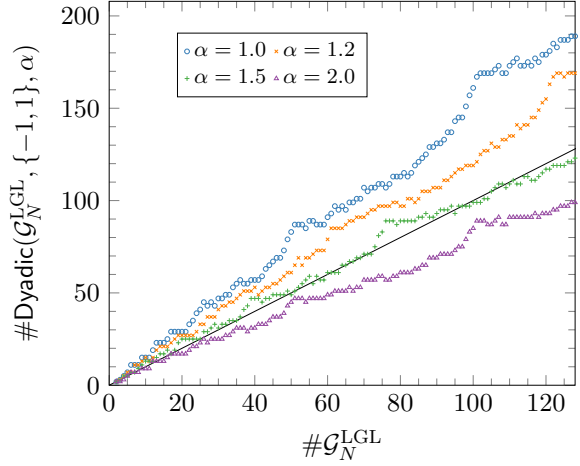}
\caption{Size of the dyadic grid $\algo{DyadicGrid}(\cGLGL_{N}, \{-1,1\}, \alpha)$ as a function of the size of $\cGLGL_{N}$ for different values of $\alpha$.
The black graph indicates the line of equal cardinalities of LGL and dyadic meshes.
}
\label{fig:GridsizesDyadic}
\end{figure}


\subsection{Monotonicity of the dyadic grids}

The grids produced by algorithm $\algo{Dyadic}(\cG, \cD_0, \alpha)$ exhibit two types of monotonicity. We first consider the monotonicity with respect to the parameter $\alpha$ which is obvious.

\begin{remark}[Monotonicity with respect to $\alpha$]
By construction of the algorithm, for any $\alpha, \tilde \alpha \in \R$ with $\alpha < \tilde \alpha$, the dyadic mesh $\algo{DyadicGrid}(\cG, \cD_0, \alpha)$ is equal to or a refinement of the dyadic mesh $\algo{DyadicGrid}(\cG, \cD_0, \tilde \alpha)$.
\end{remark}

We show next that the monotonicity of the interval lengths in the input grid $\cG$ in the sense of Theorem~\ref{theo:monotonicitylength} is inherited by the dyadic grid.

\begin{definition}[Monotonicity] \label{def:monotonicity}
A symmetric grid $\cG$ on $[a,b]$ is \emph{monotonic} if for any $\Delta, \Delta' \in \cP(\cG)$ with $\Delta, \Delta' \subset {[a,(a+b)/2]}$, where $\Delta'$ is the right neighbor of $\Delta$, one has $\abs{\Delta} \le \abs{\Delta'}$.
\end{definition}

\begin{proposition}[Monotonicity of the dyadic grids]
\label{lem:MonotonicityDyadic}
Let $\alpha>0$ and assume that $\cG$ and $\cD_0$ are symmetric and monotonic. Then the dyadic grid $\cD$ generated by $\algo{DyadicGrid}(\cG,\cD_0, \alpha)$ is also symmetric and monotonic.
\end{proposition}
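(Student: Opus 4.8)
The plan is to prove the two assertions—symmetry and monotonicity of $\cD=\algo{DyadicGrid}(\cG,\cD_0,\alpha)$—separately, with symmetry being the easy warm-up and monotonicity the real content. For \emph{symmetry}, I would argue inductively on the refinement steps of the algorithm. Let $\sigma$ denote reflection of $[a,b]$ about its midpoint $(a+b)/2$. Since $\cG$ is symmetric, the quantity $\fsabs{\uDelta(D,\cG)}$ attached to any dyadic cell $D$ is invariant under $\sigma$, i.e.\ $\fsabs{\uDelta(D,\cG)}=\fsabs{\uDelta(\sigma(D),\cG)}$; also $\fsabs{D}=\fsabs{\sigma(D)}$, and $\sigma$ maps dyadic cells to dyadic cells. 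Hence the splitting criterion $\fsabs{D}>\alpha\fsabs{\uDelta(D,\cG)}$ holds for $D$ iff it holds for $\sigma(D)$. Starting from the symmetric partition $\cP_0=\cP(\cD_0)$, one shows that if the current partition $\cP$ is symmetric then so is the one obtained after splitting all currently splittable cells; since the final partition does not depend on the order in which splittable cells are processed, $\cP(\cD)$ is symmetric.

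For \emph{monotonicity}, I would restrict attention to $[a,(a+b)/2]$ by symmetry and consider two adjacent dyadic cells $D$, $D'$ in $\cP(\cD)$ with $D'$ the right neighbor of $D$, both inside the left half; the goal is $\fsabs{D}\le\fsabs{D'}$. Since $\cD$ is dyadic, $\fsabs{D}/\fsabs{D'}$ is an integer power of $2$, so it suffices to rule out $\fsabs{D}\ge 2\fsabs{D'}$, i.e.\ to show $D'$ cannot be strictly more refined than $D$. Suppose for contradiction that $\fsabs{D}\ge 2\fsabs{D'}$. Then consider the parent $\hat D'$ of $D'$; we have $\fsabs{\hat D'}\le\fsabs{D}$, and $\hat D'$ was split by the algorithm, so by Remark~\ref{rem:relations}(ii) (more precisely \eqref{eq:I'big}, assuming $\cD_0=\{a,b\}$, or the analog \eqref{eq:split2} in general) we get $\fsabs{\hat D'}>\alpha\fsabs{\uDelta(\hat D',\cG)}$. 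On the other hand $D$ was \emph{not} split, so $\fsabs{D}\le\alpha\fsabs{\uDelta(D,\cG)}$. The plan is to combine these with the key geometric fact that $D$ lies to the \emph{left} of $\hat D'$ inside the left half, so by monotonicity of $\cG$ (Definition~\ref{def:monotonicity}) every interval of $\cP(\cG)$ meeting $D$ is no longer than every interval meeting $\hat D'$; in particular $\fsabs{\uDelta(D,\cG)}\le\fsabs{\uDelta(\hat D',\cG)}$, and more carefully $\fsabs{\uDelta(D,\cG)}\le \fsabs{\uDelta(\hat D',\cG)}$ with the relevant overlap intervals being adjacent or equal. Chaining: $\fsabs{\hat D'}>\alpha\fsabs{\uDelta(\hat D',\cG)}\ge \alpha\fsabs{\uDelta(D,\cG)}\ge\fsabs{D}\ge\fsabs{\hat D'}$, a contradiction.

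The step I expect to be the main obstacle is making the comparison $\fsabs{\uDelta(D,\cG)}$ versus $\fsabs{\uDelta(\hat D',\cG)}$ fully rigorous, because $D$ and $\hat D'$ need not be adjacent (there could be cells of $\cP(\cD)$ between $D$ and $D'$... no—$D'$ is the right neighbor of $D$, so $D$ and $\hat D'$ overlap or are adjacent, since $D'\subset\hat D'$ and $D'$ touches $D$). The subtlety is rather that $\uDelta(D,\cG)$ could in principle be an interval of $\cG$ that lies partly to the \emph{right} of $D$'s right endpoint if $D$ is short; one must use that $\uDelta$ is well-defined by the monotonicity of $\cG$ and the left-half restriction, so that $\uDelta(D,\cG)$ is the leftmost $\cG$-interval meeting $D$ and $\uDelta(\hat D',\cG)$ is the leftmost $\cG$-interval meeting $\hat D'$, and since $D$ is left of (or overlapping the left part of) $\hat D'$, monotonicity of $\cG$ gives $\fsabs{\uDelta(D,\cG)}\le\fsabs{\uDelta(\hat D',\cG)}$ directly. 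A clean way to package this is to first record a short lemma: if $\Delta,\Delta''\subset[a,(a+b)/2]$ are intervals (not necessarily in any grid) with the left endpoint of $\Delta$ not exceeding that of $\Delta''$, then $\fsabs{\uDelta(\Delta,\cG)}\le\fsabs{\uDelta(\Delta'',\cG)}$, which is immediate from Definition~\ref{def:monotonicity}. With that lemma the contradiction above goes through verbatim, and the boundary/central cells are handled by the symmetry already established together with the fact that the central interval, being the longest, causes no violation.
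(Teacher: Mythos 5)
Your approach is essentially the paper's: symmetry follows from the reflection-invariance of the splitting criterion (a step the paper does not bother to spell out), and monotonicity is proved by contradiction, assuming $\fsabs{D}\ge 2\fsabs{D'}$ and chaining the split condition on the parent $\hat D'$ with the non-split condition \eqref{eq:Ismall} on $D$ via monotonicity of $\cG$. A small stylistic difference: the paper deduces from fullness of $T(\cP)$ that $D\not\subset\hat D'$, concludes that $D'$ is the \emph{left} child of $\hat D'$, and hence $\uDelta(D',\cG)=\uDelta(\hat D',\cG)$; you compare $\uDelta(D,\cG)$ with $\uDelta(\hat D',\cG)$ directly via a left-endpoint lemma. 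Both lead to the same inequality chain.

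There is, however, a real gap in the step you hedge on yourself, namely obtaining $\fsabs{\hat D'}>\alpha\fsabs{\uDelta(\hat D',\cG)}$ when $\cD_0\ne\{a,b\}$. Falling back on \eqref{eq:split2} replaces $\alpha$ by $\min\{\alpha,\beta\}$, and then the chain gives only $\fsabs{\hat D'}>\min\{\alpha,\beta\}\,\alpha^{-1}\fsabs{\hat D'}$, which is no contradiction when $\beta<\alpha$; moreover, \eqref{eq:split2} presupposes the hypothesis \eqref{eq:beta} on $\cP_0$, which is not among the assumptions of the proposition at all. The correct repair, which also justifies the paper's silent use of the $\alpha$-split condition for $\hat D'$, is to note that $D'\notin\cP_0$, so $D'$ was produced by a split \emph{during} $\algo{Dyadic}$ and therefore $\hat D'$ genuinely satisfies the loop condition $\fsabs{\hat D'}>\alpha\fsabs{\uDelta(\hat D',\cG)}$. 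To see $D'\notin\cP_0$: since $\cP$ refines $\cP_0$, $D$ is contained in some $D_0\in\cP_0$ sharing its right endpoint with $D'$, so $D_0$ is the left $\cP_0$-neighbor of $D'$; if $D'\in\cP_0$ then monotonicity of $\cD_0$ gives $\fsabs{D}\le\fsabs{D_0}\le\fsabs{D'}$, contradicting $\fsabs{D}\ge 2\fsabs{D'}$. Note that your proposal never actually uses the hypothesis that $\cD_0$ is monotonic — a tell-tale sign that a case was being glossed over. With that observation inserted, your argument closes.
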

\begin{proof}
Assume that $D, D' \in \cP=\cP(\cD)$ are as in Definition~\ref{def:monotonicity}; suppose by contradiction that $\fsabs{D} > \fsabs{D'}$ which means $\fsabs{D} \geq 2 \fsabs{D'}$. Since $T(\cP)$ is full $D$ is not contained in $\hat D'$, the parent of $D'$, and hence $\uDelta(D',\cG)= \uDelta(\hat D',\cG)$. Since $D$ is a left neighbor of $D'$ so that $\fsabs{\uDelta(D,\cG)} \leq \fsabs{\uDelta(D',\cG)}$ we infer from \eqref{eq:Ismall}
\begin{align*}
\alpha^{-1} \fsabs{D} \leq \fsabs{\uDelta(D,\cG)} \leq \fsabs{\uDelta(D',\cG)} = \fsabs{\uDelta(\hat D',\cG)} < \alpha^{-1} \fsabs{\hat D'}= \alpha^{-1} 2 \fsabs{D'},
\end{align*}
which is a contradiction. \qed
\end{proof}

Using Theorem~\ref{theo:monotonicitylength} we now can conclude that the dyadic grids generated for LGL input grids $\cGLGL_N$ are symmetric and monotonic.

\begin{corollary}
\label{cor:MonotonicityDyadic}
Let $\alpha>0$ and $N \in \N$. Then the dyadic grid $\cD$ generated by $\algo{DyadicGrid}(\cGLGL_{N}, \{a,b\}, \alpha)$ is symmetric and monotonic.
\end{corollary}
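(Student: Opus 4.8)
The plan is to obtain this as an immediate consequence of Proposition~\ref{lem:MonotonicityDyadic}, so the only work is to check that its two hypotheses are met here: that the input grid $\cGLGL_N$ and the trivial initial grid $\cD_0=\{a,b\}$ are both symmetric and monotonic in the sense of Definition~\ref{def:monotonicity}.

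First I would dispose of the initial grid $\cD_0=\{a,b\}$. It induces the one-interval partition $\{[a,b]\}$, which is symmetric about the midpoint $(a+b)/2$, and the monotonicity requirement of Definition~\ref{def:monotonicity} is vacuously satisfied because there is no pair of adjacent intervals contained in $[a,(a+b)/2]$.

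Next I would verify both properties for $\cGLGL_N$. Symmetry follows directly from the relation $\xi^N_k=-\xi^N_{N-k}$ recorded after the definition of the LGL nodes, which, after the affine transfer from $[-1,1]$ to $[a,b]$, says exactly that $\cGLGL_N$ is symmetric about $(a+b)/2$. Monotonicity in the sense of Definition~\ref{def:monotonicity} is, for $N\ge 3$, precisely the content of Theorem~\ref{theo:monotonicitylength} (transferred affinely, which preserves ratios, hence also inequalities, of interval lengths); one only has to match the index range $0\le k\le\floor{\frac{N-1}{2}-1}$ there with the ``left neighbor'' formulation in Definition~\ref{def:monotonicity}. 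For the residual cases $N=1$ and $N=2$ the partition consists of a single interval, respectively of two intervals of equal length by symmetry, so $\fsabs{\Delta}\le\fsabs{\Delta'}$ holds trivially.

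With both hypotheses of Proposition~\ref{lem:MonotonicityDyadic} in place, the proposition yields at once that $\cD=\algo{DyadicGrid}(\cGLGL_N,\{a,b\},\alpha)$ is symmetric and monotonic, which is the assertion. There is no genuine obstacle in this argument; the only points deserving a moment's care are the reconciliation of the index ranges just mentioned and the separate handling of the small degrees $N\le 2$, for which Theorem~\ref{theo:monotonicitylength} does not directly apply.
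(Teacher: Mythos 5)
Your argument is correct and follows exactly the route the paper intends: it reduces the corollary to Proposition~\ref{lem:MonotonicityDyadic}, checking its hypotheses via the LGL symmetry relation $\xi^N_k=-\xi^N_{N-k}$, Theorem~\ref{theo:monotonicitylength} for monotonicity (with the trivial small cases $N\le 2$ and the trivial initial grid $\{a,b\}$ handled separately). The paper states this as an immediate consequence in one line; your proposal merely makes the same verification explicit.
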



\subsection{Gradedness of the dyadic grids}

If a symmetric grid $\cG$ and an initial dyadic grid $\cD_0$ are locally quasi-uniform (see \eqref{eq:grid}), one readily infers from Proposition~\ref{lem:dyadic_ver2} that, due to the local uniform $(A,B)$-equivalence of $\cG$ and the dyadic grid $\cD$, generated by the algorithm $\algo{DyadicGrid}(\cG, \cD_0, \alpha)$, the grid $\cD$ is also locally quasi-uniform. Numerical experiments indicate that this can be further quantified in terms the following notion of \emph{gradedness}.
\begin{definition}[Gradedness]
A dyadic grid $\cD$ of an interval $[a,b]$ is called \emph{graded}, if the levels of two neighboring dyadic intervals $D, D' \in \cP=\cP(\cD)$ differ at most by $1$, i.e., $1/2 \le \abs{D}/\abs{D'} \le 2$.
\end{definition}

\begin{lemma}
\label{lem:graded}
Assume that $\cG$ is a symmetric, monotonic, and locally quasi-uni\-form grid. If $\cD_0$ is graded and if the constant $C_g$ from \eqref{eq:grid} satisfies
\begin{equation}
\label{eq:alphaG}
\alpha C_g \leq 2,
\end{equation}
then the output $\cD$ of $\algo{DyadicGrid}(\cG, \cD_0, \alpha)$ is graded.
\end{lemma}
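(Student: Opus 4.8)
The plan is to argue by contradiction, exploiting that the output grid $\cD$ is itself symmetric and monotonic, so that on each half of $[a,b]$ the dyadic interval lengths are nondecreasing towards the midpoint (this is Proposition~\ref{lem:MonotonicityDyadic}; recall that input and initial grids are assumed symmetric and monotonic). Since all data are affine invariant we fix the interval, and by symmetry of $\cD$ a failure of gradedness produces two adjacent intervals $D,E\in\cP=\cP(\cD)$ lying in the left half, with $E$ the \emph{right} neighbour of $D$ and $\fsabs{E}\ge 4\fsabs{D}$; among all such violating pairs I pick one for which the level of the finer interval $D$ is \emph{maximal}. A short combinatorial step then shows that $\fsabs{E}=4\fsabs{D}$ exactly, that $D$ is the right child of its parent $\hat D$ (otherwise the right neighbour of $D$ would be its sibling, of equal length, contradicting $\fsabs{E}=4\fsabs{D}$), and consequently that $E$ is also the right neighbour of $\hat D$, with $\fsabs{E}=2\fsabs{\hat D}$; indeed, if the ratio $\fsabs{E}/\fsabs{\hat D}$ were $\ge 4$, then $(\hat D,E)$ would be a violating pair with a strictly coarser finer interval.

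Next I separate two cases according to how $\hat D$ was created. If the halving of $\hat D$ took place already during the construction of $\cD_0$, then $D$ is necessarily a leaf of the tree of $\cD_0$, i.e.\ $D\in\cP_0$; since Algorithm~\ref{algo:Dyadic} only refines, the $\cP$-interval to the right of $D$ — that is $E$ — is contained in the $\cP_0$-neighbour of $D$, whose length is at most $2\fsabs{D}$ by gradedness of $\cD_0$, contradicting $\fsabs{E}=4\fsabs{D}$. Hence $\hat D$ was halved by the algorithm, so $\fsabs{\hat D}>\alpha\,\fsabs{\uDelta(\hat D,\cG)}$, while $D$ and $E$, being leaves, satisfy $\fsabs{D}\le\alpha\,\fsabs{\uDelta(D,\cG)}$ and $\fsabs{E}\le\alpha\,\fsabs{\uDelta(E,\cG)}$. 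Set $\Delta^*:=\oDelta(\hat D,\cG)$; since the smallest $\cG$-cell meeting $E$ touches the common endpoint of $\hat D$ and $E$, it also meets $\hat D$, so $\fsabs{\uDelta(E,\cG)}\le\fsabs{\Delta^*}$ and thus $2\fsabs{\hat D}=\fsabs{E}\le\alpha\,\fsabs{\Delta^*}$. It remains to bound $\fsabs{\Delta^*}$ in terms of $\fsabs{\hat D}$: if $\hat D$ meets a single $\cG$-cell then $\uDelta(\hat D,\cG)=\Delta^*$ and the split condition already yields the absurdity $\fsabs{\hat D}>\alpha\fsabs{\Delta^*}\ge\fsabs{E}=2\fsabs{\hat D}$; if $\hat D$ meets exactly two, they are neighbours in $\cG$, so $\fsabs{\Delta^*}\le C_g\,\fsabs{\uDelta(\hat D,\cG)}$; if it meets three or more, then $\hat D$ contains a $\cG$-cell adjacent to $\Delta^*$, so $\fsabs{\Delta^*}\le C_g\,\fsabs{\hat D}$. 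Feeding these back, together with the split condition for $\hat D$, the leaf condition for $D$, $\fsabs{\hat D}=2\fsabs{D}$, and the hypothesis $\alpha C_g\le 2$, one is forced to $\fsabs{E}\le 2\fsabs{D}$, the desired contradiction.

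The hard part is precisely this last quantitative squeeze. Plugging in quasi-uniformity alone loses a factor $C_g$, and $\alpha C_g\le 2$ converts this only into \emph{non-strict} inequalities of the form $\fsabs{E}\le 2\fsabs{D}$ with equality not yet excluded when $\alpha<1$ (the regime relevant for LGL grids, where $C_g>2$). To close the gap one has to use the dyadic, i.e.\ power-of-two, quantisation of leaf lengths: because $\hat D$ was halved while $D$ was not, $\fsabs{D}$ is the largest power of two not exceeding $\alpha\fsabs{\uDelta(D,\cG)}$ whenever $\uDelta(D,\cG)=\uDelta(\hat D,\cG)$ — and likewise for the leaf lengths near $E$ — so that, since taking the largest power of two $\le x$ commutes with multiplying $x$ by a power of two, the bound $\alpha\fsabs{\Delta^*}\le 2\fsabs{\uDelta(D,\cG)}$ supplied by $\alpha C_g\le 2$ indeed forces $\fsabs{E}\le 2\fsabs{D}$. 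The genuinely delicate bookkeeping concerns the sub-cases in which $D$ and $\hat D$ do \emph{not} see the same smallest $\cG$-cell (so $\hat D$ straddles an extra $\cG$-cell) and the threshold situations in the three-cell case; here one invokes in addition that, since $\alpha<1$, no dyadic leaf can straddle three or more $\cG$-cells — its middle cell would give $\fsabs{D}\ge\fsabs{\uDelta(D,\cG)}$ and hence a further split — which keeps the admissible configurations finite in type and, once more through $\alpha C_g\le 2$, delivers the contradiction.
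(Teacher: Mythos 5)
There is a genuine gap in your proof, concentrated in two places.

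First, your combinatorial reduction to the exact ratio $\fsabs{E}=4\fsabs{D}$ via an extremality choice does not quite work as written: the pair $(\hat D, E)$ is not a pair of \emph{adjacent intervals of $\cP$} — the parent $\hat D$ is an internal node of $T(\cP)$, not a leaf — so it cannot serve as a competing ``violating pair'' for the extremal choice. In fact no such reduction is needed; the paper works directly with any violating pair.

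Second, and more importantly, the quantitative squeeze does not close in your Case~3 (when $\hat D$ meets three or more $\cG$-cells), and your appeal to ``dyadic quantisation'' does not repair it. The paper's argument avoids this case altogether by a different choice of reference cell. Writing $D'$ for what you call $E$, the paper introduces $\Delta_l$, the \emph{left neighbour} of $\uDelta(D',\cG)$, and derives
\[
\fsabs{\Delta_l}\;\ge\;C_g^{-1}\fsabs{\uDelta(D',\cG)}\;\ge\;(\alpha C_g)^{-1}\fsabs{D'}\;\ge\;\tfrac12\fsabs{D'}\;\ge\;2\fsabs{D}=\fsabs{\hat D},
\]
using the leaf condition for $D'$ and quasi-uniformity. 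Since $\Delta_l$ has length at least $\fsabs{\hat D}$ and its right endpoint is the left endpoint of $\uDelta(D',\cG)$ (which contains the shared node $p$), $\Delta_l$ covers $\hat D$ from the left, whence $\Delta_l=\uDelta(\hat D,\cG)$ and in particular $\hat D$ meets at most two $\cG$-cells — so your Case~3 is vacuous. (You could repair your proof by observing that $\oDelta(\hat D,\cG)=\uDelta(D',\cG)$ and running exactly this bound on its left neighbour.) The split condition $\fsabs{\hat D}>\alpha\fsabs{\uDelta(\hat D,\cG)}=\alpha\fsabs{\Delta_l}\ge 2\alpha\fsabs{D}=\alpha\fsabs{\hat D}$ then yields $\alpha<1$. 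Note that the paper's proof, like yours, thus tacitly requires $\alpha\ge1$; this is not stated in the lemma but is assumed wherever the lemma is invoked (Proposition~\ref{prop:graded}), and you correctly flag that tension — but your proposed fix (forcing equality out via power-of-two quantisation, bounding the number of straddled $\cG$-cells per leaf) is left at the level of intent and does not constitute a proof.
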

\begin{proof}
Consider any two neighboring dyadic intervals $D, D'\in \cP=\cP(\cD)$, $D, D'\subset [a,(a+b)/2]$, where as before $D$ is the left neighbor of $D'$. By Proposition~\ref{lem:MonotonicityDyadic}, we know that $\fsabs{D} \leq \fsabs{D'}$ and since the lengths of the dyadic intervals can only differ by powers of two, it remains to show that $\fsabs{D'} < 4 \fsabs{D}$. Suppose now to the contrary that
\begin{equation}
\label{eq:contra}
\fsabs{D'} \geq 4 \fsabs{D}.
\end{equation}
When $D\in \cP_0=\cP(\cD_0)$ there is nothing to prove since either both $D,D'\in \cP_0$, in which case \eqref{eq:contra} contradicts the hypothesis on $\cP_0$, or $D'\not\in\cP_0$ which means that the original right neighbor of $D$ in $\cP_0$ has been refined which also contradicts \eqref{eq:contra}. Now let $\Delta_l\in \cP$ denote the left neighbor of $\uDelta(D',\cG)$. From \eqref{eq:grid} and \eqref{eq:Ismall} we infer that
\begin{align*}
\fsabs{\Delta_l} \geq C_g^{-1} \fsabs{\uDelta(D',\cG)} \geq (C_g\alpha)^{-1} \fsabs{D'} \geq \frac 12 \fsabs{D'} \geq 2 \fsabs{D}.
\end{align*}
Thus, denoting by $\hat D$ the parent of $D$, we conclude that $\Delta_l = \uDelta(\hat D,\cG)$. Hence, since $\hat D\not\in \cP_0$, \eqref{eq:I'big} and the previous estimate assert that
\begin{align*}
2 \fsabs{D} = \fsabs{\hat D} > \alpha \fsabs{\Delta_l} \geq 2 \alpha \fsabs{D},
\end{align*}
which is a contradiction for $\alpha\geq 1$. This finishes the proof. \qed
\end{proof}

In order to apply this to LGL grids we note first that the above argument is local in the following sense. Again, considering by symmetry only the left half of the base interval, suppose that \eqref{eq:alphaG} holds only for intervals of $\cP(\cG)$ that are equal to or on the right of some $\tilde\Delta\in \cP(\cG)$. Then the above argument implies gradedness of $\cD$ for all $D,D'$ for which $\uDelta(\hat D,\cG)$ agrees with or is on the right of $\tilde \Delta$.

\begin{proposition}[Gradedness for LGL companion grids]
\label{prop:graded}
Let $1 \le \alpha {\le 1.25}$. Then the dyadic grids generated by the algorithm
$\algo{DyadicGrid}(\cGLGL_{N}, \cD_0, \alpha)$ are graded for {any $ 2\leq N \leq 2000$}, whenever $\cD_0$ is graded.

Moreover, when Conjecture~\ref{conj:Quotients} is valid, then the grids $\algo{DyadicGrid}(\cGLGL_{N},\allowbreak \cD_0,\alpha)$ are graded for all $N\in \N$.
\end{proposition}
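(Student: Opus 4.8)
Our strategy is to apply Lemma~\ref{lem:graded}, but only away from the left endpoint of $[a,b]$: near the boundary the quasi-uniformity constant $q_1^N$ exceeds $2/\alpha$ (it tends to $\hat q_1\approx 2.35$), so Lemma~\ref{lem:graded} is not directly applicable there and must be replaced by a combinatorial argument. Throughout, write $\cD=\algo{DyadicGrid}(\cGLGL_N,\cD_0,\alpha)$ and $\cP_0=\cP(\cD_0)$; by symmetry and Corollary~\ref{cor:MonotonicityDyadic} (resp. Proposition~\ref{lem:MonotonicityDyadic}) $\cD$ is symmetric and monotonic, so it suffices to consider two neighboring intervals $D,D'\in\cP(\cD)$ in the left half with $D$ the left neighbor of $D'$, for which $\fsabs{D}\le\fsabs{D'}$. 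Since dyadic lengths differ by powers of $2$, gradedness amounts to excluding $\fsabs{D'}\ge 4\fsabs{D}$, which we assume from now on for contradiction. If $D\in\cP_0$ this contradicts the gradedness of $\cD_0$ exactly as in the proof of Lemma~\ref{lem:graded}; hence $D\notin\cP_0$, so the parent $\hat D$ of $D$ was split by the algorithm, i.e. $\fsabs{\hat D}>\alpha\fsabs{\uDelta(\hat D,\cGLGL_N)}$. If $D$ were the left child of $\hat D$ then $D'$ would be contained in the right child of $\hat D$, giving $\fsabs{D'}\le\fsabs{D}$, a contradiction; thus $D$ is the right child of $\hat D$ and $D'$ is the right neighbor of $\hat D$, so $D'$ begins at the right endpoint of $\hat D$.

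\emph{Case 1: $\uDelta(\hat D,\cGLGL_N)\ge\Delta^N_1$ (i.e. $\hat D$ does not meet $\Delta^N_0$).} We run the argument from the proof of Lemma~\ref{lem:graded}. Let $\Delta_l$ be the left neighbor of $\uDelta(D',\cGLGL_N)$. If $\uDelta(D',\cGLGL_N)=\Delta^N_1$, then $\fsabs{D'}\le\alpha\fsabs{\Delta^N_1}$ since $D'$ is a leaf, while $\fsabs{D}=\tfrac12\fsabs{\hat D}>\tfrac\alpha2\fsabs{\uDelta(\hat D,\cGLGL_N)}\ge\tfrac\alpha2\fsabs{\Delta^N_1}$, whence $\fsabs{D'}<2\fsabs{D}$, a contradiction. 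Otherwise $\uDelta(D',\cGLGL_N)\ge\Delta^N_2$, so $\Delta_l\ge\Delta^N_1$ and $\fsabs{\uDelta(D',\cGLGL_N)}/\fsabs{\Delta_l}=q_k^N$ for some $k\ge 2$; the crucial input is that this ratio is at most $q_2^N$, which follows from property \MQ(ii). Since \MQ has been verified numerically for $\bar N=2000$, one has $q_2^N\le q_2^{2000}<1.572$ for $N\le 2000$, hence $\alpha q_2^N\le 2$; under Conjecture~\ref{conj:Quotients} the quotients $q_2^N$ increase in $N$ to $\hat q_2<1.572$ (Proposition~\ref{prop:Muldoon}), so $\alpha q_2^N\le 2$ for all $N$. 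Plugging $q_2^N$ into the chain of Lemma~\ref{lem:graded} (with $\fsabs{D'}\ge 4\fsabs{D}$) yields $\fsabs{\Delta_l}\ge(\alpha q_2^N)^{-1}\fsabs{D'}\ge\tfrac12\fsabs{D'}\ge 2\fsabs{D}=\fsabs{\hat D}$, which forces $\Delta_l=\uDelta(\hat D,\cGLGL_N)$ and therefore $2\fsabs{D}=\fsabs{\hat D}>\alpha\fsabs{\Delta_l}\ge 2\alpha\fsabs{D}\ge 2\fsabs{D}$, a contradiction.

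\emph{Case 2: $\uDelta(\hat D,\cGLGL_N)=\Delta^N_0$ (i.e. $\hat D$ meets the smallest LGL interval).} Here we use a purely combinatorial fact that relies on $\alpha\ge 1$: \emph{any cell $C$ that was split by the algorithm and meets $\Delta^N_0$ is the left child of its parent}, unless $C=[a,b]$ — which happens only for $N=2$ and is then trivial by symmetry. Indeed, if $C$ were the right child of $\hat C$, its left endpoint would equal the midpoint of $\hat C$, hence be $\ge a+\tfrac12\fsabs{\hat C}$; since $C$ meets $\Delta^N_0=[a,\xi^N_1]$ with $\xi^N_1-a=\fsabs{\Delta^N_0}$, this gives $\fsabs{C}=\tfrac12\fsabs{\hat C}\le\fsabs{\Delta^N_0}\le\alpha\fsabs{\Delta^N_0}=\alpha\fsabs{\uDelta(C,\cGLGL_N)}$ (using that $\Delta^N_0$ is strictly the smallest LGL interval, Theorem~\ref{theo:monotonicitylength}), contradicting the splitting criterion for $C$. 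Applying this to $\hat D$ (which was split, meets $\Delta^N_0$, and is not the root) shows $\hat D$ is the left child of its parent $\hat{\hat D}$, so the right endpoint of $\hat D$ — where $D'$ begins — is the midpoint of $\hat{\hat D}$. A midpoint of a dyadic cell of level $\ell:=\operatorname{level}(\hat D)-1$ is a breakpoint of level $\ell+1=\operatorname{level}(\hat D)$ but of no coarser level, so every cell of $\cD$ starting there has level at least $\operatorname{level}(\hat D)$; consequently $\fsabs{D'}\le\fsabs{\hat D}=2\fsabs{D}$, again contradicting $\fsabs{D'}\ge 4\fsabs{D}$.

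Cases 1 and 2 are exhaustive, since $\uDelta(\hat D,\cGLGL_N)$ is one of the LGL intervals in the left half, and by symmetry the same holds on the right half; this proves the proposition. The main obstacle is precisely Case 2: because $q_1^N\to\hat q_1\approx 2.35>2$, the interval $\Delta^N_0$ and its neighbor $\Delta^N_1$ already differ by "more than one dyadic level", so Lemma~\ref{lem:graded} cannot be invoked near the boundary and one is forced to exploit the dyadic tree structure — the left-child rule above, whose entire strength comes from the assumption $\alpha\ge 1$. The upper bound $\alpha\le 1.25$ is only needed in Case 1, to guarantee $\alpha q_2^{2000}<2$ (and $\alpha\hat q_2<2$); any $\alpha<2/\hat q_2\approx 1.27$ would serve equally well.
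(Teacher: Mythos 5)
Your proof is correct and follows the same overall strategy as the paper's: (a) invoke Lemma~\ref{lem:graded} with the sharper quasi-uniformity constant $\tilde C_g = q_2^{\bar N}$ from Remark~\ref{rem:SmallestConstant} (so that $\alpha\,\tilde C_g \leq 1.25\cdot 1.572 < 2$ satisfies \eqref{eq:alphaG}) to handle transitions away from $\Delta^N_0$; (b) handle transitions near the boundary interval $\Delta^N_0$, where the ratio $q_1^N > 2/\alpha$ defeats Lemma~\ref{lem:graded}, by a purely combinatorial argument exploiting the binary tree structure and $\alpha \geq 1$. Where the two differ is in the organization of step (b): the paper explicitly tracks the first few cells $D_0,D_1,D_2,D_3$ adjacent to the endpoint and argues that the breakpoints generated by successive dyadic splittings force gradedness among them, whereas you distill the underlying mechanism into a single clean observation — the \emph{left-child rule}: a cell that was split and meets $\Delta^N_0$ cannot be the right child of its parent, since its left endpoint would then sit at a midpoint at distance $\geq \fsabs{C}$ from $a$, forcing $\fsabs{C}\leq \fsabs{\Delta^N_0}\leq\alpha\fsabs{\uDelta(C,\cGLGL_N)}$ and contradicting the refinement criterion. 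From this, the dyadic level of the right endpoint of $\hat D$ (an ``odd'' breakpoint of level exactly $\operatorname{level}(\hat D)$) immediately yields $\fsabs{D'}\leq\fsabs{\hat D}=2\fsabs{D}$. This is a more systematic and arguably more transparent justification than the paper's $D_0,\dots,D_3$ bookkeeping. Two minor remarks: the parenthetical ``which happens only for $N=2$'' in your left-child rule is inaccurate ($C=[a,b]$ can be split for any $N$); what matters is simply that the rule needs a parent, and the root case is handled directly by the symmetry of $\cD$, as you do. Also, your invocation of \MQ(ii) to bound $q_k^N \leq q_2^N$ formally covers only $k\leq\floor{(N-3)/2}$; for the remaining central quotients one appeals — as the paper implicitly does in Remark~\ref{rem:SmallestConstant} — to symmetry, which gives $q_k^N\leq 1$ there, so the bound still holds.
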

\begin{proof}
Recall that property \MQ has been verified numerically to hold at least for $\bar N \leq 2000$. Thus, we can invoke Remark~\ref{rem:SmallestConstant} and note that for $\tilde C_g$ given by \eqref{eq:inner2} and \eqref{eq:q-2000} implies that for $\alpha\leq 1.25$ that condition \eqref{eq:alphaG} is satisfied for all quotients not involving the outermost LGL intervals. The same arguments apply, on account of Proposition~\ref{prop:Muldoon}, for all $N\in \N$ provided that
Conjecture~\ref{conj:Quotients}, holds.

Hence, it suffices to verify gradedness for the intervals adjacent to the left end point of the base interval. Let $D_0\in \cP=\cP(\cD)$ share the left end point. Then either $D_0$ has a right sibling of equal size in $\cP$ or equals half the base interval. In both cases gradedness holds trivially. The next observation is that the right neighbor $D_2\in \cP$ of the parent $\hat D$ of two siblings next to the left end point of the base interval must satisfy $\fsabs{D_2}\leq \fsabs{\hat D}$ since otherwise these intervals cannot belong to a partition that stems from successive dyadic splittings. The next possibility for breaking gradedness would be the transition to $D_3$. But, again to be part of the leaf set of a dyadic tree one must have $\fsabs{D_3}\leq \fsabs{D_0}+\fsabs{D_1}+\fsabs{D_2}=2\fsabs{D_2}$ which again implies gradedness of $\{D_0, D_1,D_2, D_3\}$. To obtain a jump of two levels at the left boundary of $D_3$ one must have that the two children $D_{2,0}, D_{2,1}$ also belong to $\cP$, i.e. $\fsabs{D_{2,i}}=\fsabs{D_1}=\fsabs{D_0}$, $i\in\{0,1\}$. But this means (since as in the proof of Lemma~\ref{lem:graded} we can assume that $D_{2,1}\not\in\cP_0$) that $\fsabs{D_2}> \alpha\fsabs{\uDelta(D_2,\cG)}$. Therefore, since $\alpha \geq 1$ and $\uDelta(D_2,\cG)\cap D_3\neq \emptyset$, we see that $\uDelta(D_2,\cG)$ does not contain the left end point of $[a,b]$ and hence is not an extreme interval. Since under the assumption that Conjecture~\ref{conj:Quotients} holds for all interior intervals of $\cP$ Lemma~\ref{lem:graded} applies, this finishes the proof. \qed
\end{proof}

Numerical evidence suggests that the constraint $\alpha \leq 1.25$, used above, is not necessary.


\subsection[Closedness of the dyadic grids under operator L]{Closedness of the dyadic grids under stretching}\label{sect:DyadicClosedL}

Let us recall the stretching operator $L=L_{[a,b]}:[a,(a+b)/2]\rightarrow[a,b], x \mapsto 2x-a$ as defined in Section~\ref{sect:B}. Under the assumption that Conjecture~\ref{conj:LGLComparisonStretched} is true, we can show that for $\alpha \ge 1$ and an LGL input grid $\cGLGL_N$, the dyadic grid generated by Algorithm~\ref{algo:Dyadic} is closed under stretching, in the sense of Definition~\ref{def:closedness}.

\begin{proposition}
\label{prop:stretched}
Let $\alpha \ge 1$, $N \in \N$ and $\cD=\algo{DyadicGrid}(\cGLGL_{N}, \cD_0, \alpha)$, where $\cD_0$ is closed under stretching. Then the validity of Conjecture~\ref{conj:LGLComparisonStretched} implies that $\cD$ is closed under stretching.
\end{proposition}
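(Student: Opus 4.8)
The plan is to proceed by induction on the dyadic refinement level and show that every split performed by Algorithm~\ref{algo:Dyadic} on the left half $[a,(a+b)/2]$ is matched by the corresponding split on the stretched image in $[a,b]$, so that the stretched copy of $\cD\cap[a,(a+b)/2]$ is always a subgrid of $\cD$. Concretely, write $\cP=\cP(\cGLGL_N,\cD_0)$ and let $D\in\cP$ with $D\subset[a,(a+b)/2]$; I must show $L(D)$ is a union of intervals of $\cP$, equivalently that every dyadic ancestor of $L(D)$ down to $L(D)$ itself was actually produced by the while-loop. Since $\cD_0$ is already closed under stretching, the base case is clear; for the inductive step it suffices to prove the single-split statement: if $D\subset[a,(a+b)/2]$ satisfies the splitting criterion $\fsabs{D}>\alpha\fsabs{\uDelta(D,\cGLGL_N)}$, then so does $L(D)$, i.e. $\fsabs{L(D)}>\alpha\fsabs{\uDelta(L(D),\cGLGL_N)}$.

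The key inequality to establish is therefore
\[
\fsabs{\uDelta(L(D),\cGLGL_N)}\;\le\;\fsabs{L(\uDelta(D,\cGLGL_N))}\,,
\]
since then $\fsabs{L(D)}=2\fsabs{D}>2\alpha\fsabs{\uDelta(D,\cGLGL_N)}=\alpha\fsabs{L(\uDelta(D,\cGLGL_N))}\ge\alpha\fsabs{\uDelta(L(D),\cGLGL_N)}$, using $\alpha\ge1$ only through the fact that $2\alpha\ge\alpha$; actually one just needs $\fsabs{L(D)}=2\fsabs{D}>2\alpha\fsabs{\uDelta(D,\cGLGL_N)}\ge\alpha\fsabs{\uDelta(L(D),\cGLGL_N)}$ once the displayed inequality is in hand. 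The displayed inequality is exactly where Conjecture~\ref{conj:LGLComparisonStretched}, i.e. property $\mathbf{Str}_N$, enters. Here $D\subset(a,a+(b-a)/4]$ is automatic whenever $D$ is a proper subinterval of the left half strictly inside it (and the boundary cases $D$ touching $a$ or $D$ equal to a half are handled separately by the monotonicity/symmetry of $\cGLGL_N$ and of $\cD_0$). By property $\mathbf{Str}_N$, for the LGL interval $I:=\uDelta(D,\cGLGL_N)$ — which satisfies $I\subset(a,a+(b-a)/4]$ because it overlaps $D$ and has length $\le\fsabs{D}/\alpha\le\fsabs{D}$ — every LGL interval $I'$ meeting $L(I)$ has $\fsabs{I'}\le\fsabs{L(I)}$. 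Since $L$ maps the union of LGL intervals meeting $D$ into a set covering $L(D)$, and these images are in turn covered by the (finitely many) LGL intervals meeting $L(I)$ together with their length-comparable neighbours via Theorem~\ref{thm:A}(i), one concludes that the shortest LGL interval meeting $L(D)$ has length at most $\fsabs{L(I)}=2\fsabs{I}=2\fsabs{\uDelta(D,\cGLGL_N)}$, which is what is needed.

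\textbf{The main obstacle.} The delicate point is bookkeeping which LGL interval plays the role of $\uDelta(L(D),\cGLGL_N)$: the stretched image $L(I)$ of $I=\uDelta(D,\cGLGL_N)$ need not itself be an LGL interval, so one cannot directly say ``$\uDelta(L(D),\cGLGL_N)$ is the stretch of $\uDelta(D,\cGLGL_N)$''. Instead one must argue that $L(D)\subset L(I)\cup L(I^{+})$ where $I^{+}$ is the right neighbour of $I$ (using that $D$ meets at most two LGL intervals when $\alpha$ is moderate, or more carefully chaining through all overlapping LGL intervals), then invoke $\mathbf{Str}_N$ to bound the lengths of all LGL intervals inside $L(I)$, and finally use local quasi-uniformity (Theorem~\ref{thm:A}(i), $C_g^{\mathrm{LGL}}\le 7\pi^2/4$) to pass from ``interval inside $L(I)$'' to ``interval merely meeting $L(D)$''. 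Care is also needed because $L(D)$ may stick out past the midpoint $(a+b)/2$, so one should symmetrize: by Proposition~\ref{lem:MonotonicityDyadic} and symmetry of $\cGLGL_N$ the part of $L(D)$ in the right half is the mirror image of the situation already analysed, and monotonicity guarantees the shortest overlapping LGL interval still sits on the side closer to the boundary. Assembling these estimates — the inclusion $L(D)\subset L(I)\cup L(I^+)$, property $\mathbf{Str}_N$, and quasi-uniformity — yields $\fsabs{L(D)}>\alpha\fsabs{\uDelta(L(D),\cGLGL_N)}$, completing the inductive step and hence the proof that $L(\cD\cap[a,(a+b)/2])\subset\cD$. \qed
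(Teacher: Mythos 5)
Your overall skeleton is right — reduce to a single-split statement, show the implication
\[
\fsabs{D}>\alpha\,\fsabs{\uDelta(D,\cGLGL_N)} \ \Longrightarrow\ \fsabs{L(D)}>\alpha\,\fsabs{\uDelta(L(D),\cGLGL_N)},
\]
and hinge it on the inequality $\fsabs{\uDelta(L(D),\cGLGL_N)}\leq\fsabs{L(\uDelta(D,\cGLGL_N))}$, which is exactly where \SN enters. That matches the paper. But the route you propose for establishing that inequality — $L(D)\subset L(I)\cup L(I^+)$, then $\mathbf{Str}_N$ for intervals inside $L(I)$, then quasi-uniformity to pass to intervals that merely meet $L(D)$ — does not actually close the argument. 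Quasi-uniformity only gives a length comparison \emph{up to the multiplicative constant} $C_g>1$, so the best you could extract along that route is $\fsabs{\uDelta(L(D),\cGLGL_N)}\lesssim C_g\,\fsabs{L(I)}$, and consequently $\fsabs{L(D)}>\alpha C_g^{-1}\fsabs{\uDelta(L(D),\cGLGL_N)}$. Since $\alpha C_g^{-1}<\alpha$, this is strictly weaker than the splitting criterion in line~\ref{inalgo:split} of Algorithm~\ref{algo:Dyadic}, so you cannot conclude that $L(D)$ is split. Your final sentence asserts the clean bound ``at most $\fsabs{L(I)}$'' without the constant, but the chain of reasoning you give does not deliver it.

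The missing idea is simpler and avoids both the decomposition $L(I)\cup L(I^+)$ and quasi-uniformity entirely: $I=\uDelta(D,\cGLGL_N)$ is (by monotonicity, Theorem~\ref{theo:monotonicitylength}) the \emph{leftmost} LGL interval meeting $D$, hence it covers the left endpoint of $D$, so $L(I)$ covers the left endpoint of $L(D)$. Likewise $\uDelta(L(D),\cGLGL_N)$, being the shortest hence leftmost LGL interval meeting $L(D)$, also covers the left endpoint of $L(D)$. Therefore $L(I)\cap\uDelta(L(D),\cGLGL_N)\neq\emptyset$, and \SN applied to $I$ gives directly $\fsabs{\uDelta(L(D),\cGLGL_N)}\leq\fsabs{L(I)}$, with no spurious constant. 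One also has to check that $I$ actually lies in $(a,a+(b-a)/4]$ so that \SN is applicable: after reducing by affine invariance and symmetry to $D\subseteq[-1,-1/2]$ with $-1\notin D$, the strict inequality $\fsabs{I}<\alpha^{-1}\fsabs{D}\leq\fsabs{D}$ together with the dyadic structure of $D$ forces $-1\notin I$ and hence $I\subset(-1,-1/2]$; you wave this off as a ``boundary case handled separately,'' but it does require this short argument. The case $D$ touching $-1$ is indeed trivial ($L(D)$ is the parent of $D$), and the case $D\subseteq[-1/2,0]$ follows from symmetry and Proposition~\ref{lem:MonotonicityDyadic}, as you indicate.
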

\begin{proof}
As usual, we set $\cP=\cP(\cD)$ and $\cP_0=\cP(\cD_0)$. By affine invariance it suffices to consider $[a,b]=[-1,1]$, see Remark~\ref{rem:DyadicAffineInvariance}. First note that since $L([-1,-1/2])= [-1,0]$ and $L([-1/2,0])= [0,1]$, it suffices, on account of symmetry and monotonicity, to show that $L(D)\in T(\cP)$ for any $D \in \cP$, $D\subseteq [-1,-1/2]$. For $D\in \cP_0\cap \cP$ there is nothing to show by our assumption on $\cD_0$. Suppose now that $D\in T(\cP)$ is a node that is split during the execution of algorithm $\algo{Dyadic}$, i.e., due to the condition in line~\ref{inalgo:split} of Algorithm~\ref{algo:Dyadic}, we know that $\fsabs{D} >\alpha \fsabs{\uDelta}$, where $\uDelta := \uDelta(D,\cG)$. Since $\alpha \geq 1$ we have, in particular, $\fsabs{\uDelta} \leq \fsabs{D}$. The assertion follows as soon as we have shown that the stretched version $L(D)$ must also be split, i.e., we have to show that
\begin{equation}
\label{eq:LDsplit}
\fsabs{D} >\alpha \fsabs{\uDelta(D,\cG)} \quad \Longrightarrow\quad \fsabs{L(D)} > \alpha \fsabs{\uDelta(L(D),\cG)}.
\end{equation}
To show this, let us observe first that it suffices to verify \eqref{eq:LDsplit} for $D\subseteq [-1,-1/2]$. In fact, any $D \subseteq [-1/2,0]$ gets mapped by $L$ into $[0,1]$. It then follows from the monotonicity and symmetry of $\cD$ that the midpoint of such a $D$ must be contained in $\cD \cap [0,1]$. So it remains to consider $D\subseteq [-1,-1/2]$. First, there is nothing to show when the left end point of $D$ is $-1$, since then $L(D)$ is the parent of $D$. We may therefore assume that $D$ does not contain $-1$. By the above comment $-1\not\in \uDelta(D,\cG)$. Clearly, $L(\uDelta(D,\cG))$ contains the left end point of $L(D)$ and therefore has to intersect $\uDelta(L(D),\cG)$. Under the assumption that Conjecture~\ref{conj:LGLComparisonStretched} is valid, it follows that
\begin{align*}
\fsabs{\uDelta(L(D),\cG)} \leq \fsabs{L(\uDelta(D,\cG))} = 2 \fsabs{\uDelta(D,\cG)} < 2 \alpha^{-1} \fsabs{D} = \alpha^{-1} \fsabs{L(D)},
\end{align*}
which finishes the proof. \qed
\end{proof}


\subsection{Construction of $\cD_N$ and the proof of Theorem~\ref{thm:B}}
A first natural attempt to construct a dyadic grid associated with a given LGL grid $\cGLGL_N$ would be to take $\cD_N = \algo{DyadicGrid}(\cGLGL_{N},\{a,b\},\alpha)$ for some $\alpha \in [1,1.25]$. In fact, the initial dyadic grid $\{a,b\}$ trivially satisfies all the assumptions on $\cD_0$ used in the derivation of the various properties above. Unfortunately, although this seems to occur very rarely, the grids $\algo{DyadicGrid}(\cGLGL_{N},\allowbreak\{a,b\},\allowbreak\alpha)$ are \emph{not} always nested, as shown by numerical evidence. For instance, for $\alpha=1$, the first pair $N^-, N^+$ of polynomial degrees where such dyadic grids are not nested, occurs for $N^+=20$ and $N^-=19$. For corresponding extensive numerical studies and further examples of non-nestedness, we refer the reader to \cite{brix-thesis}.

Therefore, to ensure nestedness we employ $\algo{DyadicGrid}(\cGLGL_{N}, \cD_0,\alpha)$ with dynamically varying initial grids $\cD_0$, as described in Algorithm~\ref{algo:DyadicNested}.

\begin{algorithm}[ht]
\begin{algorithmic}[1]
\State{$\cD_1 \gets \algo{DyadicGrid}(\cGLGL_{1}, \{a,b \}, \alpha)$}
\Comment{initialization}
\For{ $1 \le j< N$}
\State{$\cD_{j+1} \gets \algo{DyadicGrid}(\cGLGL_{j+1}, \cD_{j}, \alpha)$}
\Comment{refine $\cD_j$ for $\cGLGL_{j+1}$ according to \eqref{eq:Ismall}}
\EndFor
\end{algorithmic}
\caption{Algorithm $\algo{NestedDyadicGrid}(N,\{a,b\},\alpha)$ for the generation of LGL related nested dyadic grids.}
\label{algo:DyadicNested}
\end{algorithm}

By construction, the grids $\cD_N$ are nested. Moreover, one inductively concludes from the results of the preceding sections that the $\cD_N$ are symmetric and monotonic. They are also closed under stretching for any range of degrees $N$ for which property \SN holds. Moreover, they are also graded (beyond any numerically confirmed range of $N$) if Conjecture~\ref{conj:Quotients} is valid.

A little care must be taken to confirm the desired locally $(A,B)$-uniform equivalence of $\cD_N$ with $\cGLGL_N$. Again the lower inequality is ensured by \eqref{eq:Ismall}, see Remark~\ref{rem:relations} (i). As for the upper inequality, a certain obstruction lies in the fact that \eqref{eq:I'big} is not necessarily inherited for the specific value $\alpha$. In fact, the following situation may occur which again is a consequence of the fact that intervals in LGL grids not only decrease in size but also move outwards with increasing degree. Let for a given (dyadic) interval $D$, $\Delta = \uDelta(D,\cGLGL_{N})$ be the $\ell$-th interval in $\cP_N:=\cP(\cGLGL_{N})$. Then it could happen that $D$ no longer intersects the $\ell$-th interval in $\cP_{N+1}$, i.e. $\uDelta(D,\cGLGL_{N+1})$ is the $(\ell+1)$-st interval in $\cP_{N+1}$ and may therefore have larger size than $\uDelta(D,\cGLGL_{N})$. As a consequence $\fsabs{D} > \alpha \fsabs{\uDelta(D,\cGLGL_{N+1})}$ is not necessarily true. Nevertheless, the following can be shown.

\begin{property}
\label{lem:dyadic-2}
For all $N>1$, the dyadic grids $\cD_{N}$ produced by Algorithm~\ref{algo:DyadicNested} are locally $(A,B)$-uniformly equivalent to $\cGLGL_N$ with constants $A,B$ specified below:
\begin{multline}
\label{eq:dyeq2}
\qquad\forall D \in \cP_N=\cP(\cD_N) \;, \ \ \forall \,\Delta \in \cP_N=\cP(\cGLGL_N) \;,\\ \quad \Delta \cap D \neq \emptyset
\implies~
\alpha^{-1}\leq \frac{\fsabs{\Delta}}{\fsabs{D}} \leq \frac{2C_g}{\min\{\alpha C_g^{-1},1\}} \;.\qquad
\end{multline}
Furthermore, we have
\begin{equation}
\label{eq:dyeq2bis}
\#\, \cD_N \simeq \#\, \cGLGL_N \;.
\end{equation}
\end{property}
\begin{proof}
{Due to Remark~\ref{rem:DyadicAffineInvariance} we can assume without loss of generality that $[a,b]=[-1,1]$} and, by symmetry, consider only intervals in the left half $[-1,0]$. To be able to apply Proposition~\ref{lem:dyadic_ver2}, we shall exploit the fact that with increasing $N$ outward moving intervals decrease in size. More precisely, {by Theorem~\ref{theo:DispLGL2}}, one has for any $m\in \N$
\begin{equation}
\label{eq:left}
\begin{aligned}
\Delta^N_i:=[\xi^N_{i-1},\xi^N_i]\in \cP_N,\,\ \
\Delta^{N+m}_j= [\xi^{N+m}_{j-1},\xi^{N+m}_{j}]\in \cP_{N+m},\\
\xi^{N+m}_{j-1} \leq \xi^N_{i-1} \quad
\implies~
\fsabs{\Delta^{N+m}_j} \leq \fsabs{\Delta^N_i}\;.
\end{aligned}
\end{equation}
Now, on account of Proposition~\ref{lem:dyadic_ver2}, the assertion follows as soon as we have shown that \eqref{eq:I'bigg_ver2} holds with $\beta = \alpha C_g^{-1}$. To that end, suppose $D\in \cP_N$ is not subdivided in $\algo{DyadicGrid}(\cGLGL_{N+1}, \cD_N, \alpha)$. Without loss of generality we may assume that $\cP_N \neq \{[-1,1]\}$. Thus, $D$ must have been created by splitting $ \hat D(D)\in \cP_{N-m}$ for some $m\in \N$. By the condition in line~\ref{inalgo:split} of Algorithm~\ref{algo:Dyadic}, $\hat D(D)$ satisfies $\fsabs{\hat D(D)} > \alpha \fsabs{\uDelta(\hat D(D),\cGLGL_{N-m})}$. Now let $\Delta'\in \cP_{N-m}$ be the right neighbor of $\uDelta(\hat D(D),\cGLGL_{N-m})$, so that by \eqref{eq:grid}, $\fsabs{\hat D(D)} \geq \alpha C_g^{-1} \fsabs{\Delta'}$. One readily concludes from the monotonicity of the intervals in the LGL grids, see Theorem~\ref{theo:monotonicitylength}, that the left end point of $\uDelta(\hat D(D),\cGLGL_{N})$ must be smaller than the left end point of $\Delta'$. Therefore, we infer from \eqref{eq:left} that
\begin{align*}
\fsabs{\hat D(D)} \geq \alpha C_g^{-1} \fsabs{\Delta'} \geq C_g^{-1}\alpha \fsabs{\uDelta(\hat D(D),\cGLGL_{N})} \;,
\end{align*}
which is \eqref{eq:I'bigg_ver2}. This finishes the proof of \eqref{eq:dyeq2}. Finally, \eqref{eq:dyeq2bis} follows immediately from this result. \qed
\end{proof}

In summary, all the claims stated in Theorem~\ref{thm:B} have been verified to be satisfied by the dyadic grids $\cD_N$, generated by Algorithm~\ref{algo:DyadicNested}, which completes the proof of Theorem~\ref{thm:B}. \qed

\begin{figure}
\centering
\includegraphics[width=7.6cm]{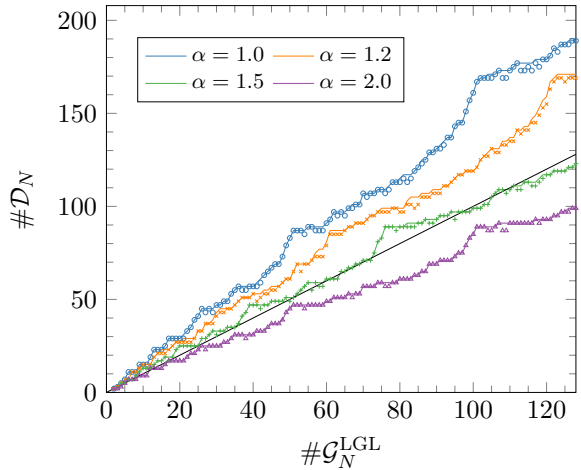}
\caption{Size of the nested dyadic grid $\cD_N$ (solid lines) as a function of the size of $\cGLGL_{N}$ for different values of $\alpha$. The sizes of the dyadic grids as in Figure~\ref{fig:GridsizesDyadic} are also given for a comparison (marks). The black graph indicates the line of equal cardinalities of LGL and dyadic meshes.}
\label{fig:GridsizesDyadicNested}
\end{figure}

\smallskip
In Figure~\ref{fig:GridsizesDyadicNested} the sizes of the nested dyadic grids $\cD_N$ are plotted against the sizes of the LGL grids $\cGLGL_{N}$ for different values of $\alpha$. We observe that usually only very few points, in comparison with running just $\algo{Dyadic}(\cGLGL_N, \{a,b\}, \alpha)$, are added to the grid to ensure nestedness.

Finally, we can invoke Proposition~\ref{prop:graded} to confirm the claims in Remark~\ref{rem:graded} for the family of dyadic grids $\algo{DyadicGrid}(\cGLGL_{N}, \{a,b \}, \alpha)$, $1\leq \alpha \leq 1.25$.


\begin{acknowledgements}
This work was supported in part by the DFG project ``Optimal preconditioners of spectral Discontinuous Galerkin methods for elliptic boundary value problems'' (DA 117/23-1), the Excellence Initiative of the German federal and state governments (RWTH Aachen Seed Funds, Distinguished Professorship projects, Graduate School AICES), and NSF grant DMS 1222390.

We are indebted to Martin E. Muldoon for inspiring discussions concerning the spacing of LGL nodes and to Sabrina Pfeiffer for helpful comments on the manuscript.
\end{acknowledgements}


\end{document}